\declaretheoremstyle[headfont=\normalsize\normalfont\bfseries,notefont=\mdseries, notebraces={(}{)},bodyfont=\normalfont,postheadspace=0.5em]{basicstyle}
\declaretheoremstyle[headfont=\normalsize\normalfont\bfseries,notefont=\mdseries,
notebraces={(}{)},bodyfont=\normalfont\itshape,postheadspace=0.5em]{italstyle}
\declaretheorem[style=italstyle,name=Theorem,numberwithin=section]{theorem}
\declaretheorem[style=italstyle,name=Corollary,sibling=theorem]{cor}
\declaretheorem[style=italstyle,name=Proposition,sibling=theorem]{prop}
\declaretheorem[style=italstyle,name=Lemma,sibling=theorem]{lemma}
\renewenvironment{proof}{\preproof}{\endpreproof}
\newcommand{\abs}[1]{\left|#1\right|}
\newcommand{\bd}{\partial}
\newcommand{\cl}[1]{\overline{#1}}
\newcommand{\C}{\mathbb{C}}
\renewcommand{\d}{\mathrm{d}}
\newcommand{\intprod}{\mathbin{{\tikz{\draw(-0.1,0)--(0.1,0)--(0.1,0.2)}\hspace{0.5mm}}}}
\newcommand{\ip}[1]{\left\langle#1\right\rangle}
\newcommand{\norm}[1]{\left\lVert#1\right\rVert}
\newcommand{\pr}{\mathrm{pr}}
\newcommand{\R}{\mathbb{R}}
\newcommand{\set}[1]{\left\{#1\right\}}
\newcommand{\Z}{\mathbb{Z}}
\renewcommand\section{\@startsection{section}{1}{0pt}{-3.5ex \@plus -1ex \@minus -.2ex}{2.3ex \@plus.2ex}{\centering\itshape}}
\renewcommand{\subsection}{\@startsection{subsection}{2}%
  \z@{.5\linespacing\@plus.7\linespacing}{-.5em}%
  {\normalfont\itshape}}
\title[Bordism classes of loops and Floer's equation]{Bordism classes of loops and Floer's equation in cotangent bundles}
\author{Filip Bro\'ci\'c}
\author{Dylan Cant}
\date{\today}
\begin{document}
\maketitle

\begin{abstract}
  For each representative $\mathfrak{B}$ of a bordism class in the free loop space of a manifold, we associate a moduli space of finite length Floer cylinders in the cotangent bundle. The left end of the Floer cylinder is required to be a lift of one of  the loops in $\mathfrak{B}$, and the right end is required to lie on the zero section. Under certain assumptions on the Hamiltonian functions, the length of the Floer cylinder is a smooth proper function, and evaluating the level sets at the right end produces a family of loops cobordant to $\mathfrak{B}$. The argument produces arbitrarily long Floer cylinders with certain properties. We apply this to prove an existence result for 1-periodic orbits of certain Hamiltonian systems in cotangent bundles, and also to estimate the relative Gromov width of starshaped domains in certain cotangent bundles. The moduli space is similar to moduli spaces considered in \cite{abbondandolo_schwarz_2,abouzaid_based_loop} for Tonelli Hamiltonians. The Hamiltonians we consider are not Tonelli, but rather of ``contact-type'' in the symplectization end.
\end{abstract}

\section{Introduction}
\label{sec:introduction}

Let $M$ be a compact $n$-dimensional manifold. This paper concerns the relationship between the free loop space of $M$ and moduli spaces of solutions to Floer's equation in $T^{*}M$. Such a relationship is well-known, and the most famous example is that the symplectic homology of $T^{*}M$ is isomorphic to the homology of the free loop space. This is originally due to \cite{viterbo_cotangent_1,viterbo_cotangent_2}, and was deduced using the generating function approach to symplectic homology of \cite{traynor_GF_1}. Subsequently, other proofs were discovered, notably the heat-flow approach of \cite{salamon_weber}, and the Morse theory approach of \cite{abbondandolo_schwarz, abbondandolo_portaluri_schwarz, abbondandolo_schwarz_2, abbondandolo_schwarz_3} and \cite{abouzaid_based_loop,abouzaid_monograph}.

This paper concerns a different approach and relates bordism classes in the free loop space to moduli spaces of Floer cylinders. As application, we are able to prove an existence result for $1$-periodic orbits of a large class of Hamiltonian systems, Theorem \ref{theorem:A}, and estimate the relative Gromov width of star-shaped domains in certain cotangent bundles (relative the zero section), Theorem \ref{theorem:B}. Our approach also has the advantage of circumventing the need to consider indices, orientations of moduli spaces, or any chain complexes at all.

\subsection{Applications}
\label{sec:applications}
We begin by explaining the applications of our method, and then describe the main technical result of the paper in \S\ref{sec:definition_of_moduli_space}.

\subsubsection{An existence theorem for contact-at-infinity Hamiltonian systems}
\label{sec:an-existence-theorem}
A Hamiltonian system $H_{t}:T^{*}M\to \R$ is called \emph{contact-at-infinity} if it satisfies:
\begin{equation*}
  H_{t}(q,e^{s}p)=e^{s}H_{t}(q,p),
\end{equation*}
for all $s\ge 0$ and all $(q,p)$ outside of a compact set. The significance of such Hamiltonians is that their flow $\varphi_{t}$ is equivariant with respect to the Liouville flow, outside of a compact set. It is well-known that quotient space:
$$Y=(T^{*}M-\text{zero section})/\R$$
inherits a canonical contact structure (as the ideal boundary of a Liouville manifold), and the flow of contact-at-infinity Hamiltonian induces a contact isotopy of $Y$.

As a special case, one can fix a starshaped domain $\Omega\subset T^{*}M$, as defined in \S\ref{sec:star-shaped domains}, and define $r_{\Omega}:T^{*}M\to [0,\infty)$ by requiring $r_{\Omega}(\bd\Omega)=1$ and $r_{\Omega}(q,e^{s}p)=e^{s}r_{\Omega}(q,p)$. This function $r_{\Omega}$ will not be smooth along the zero section, however, one can consider Hamiltonian systems $H_{t}$ so that $H_{t}=ar_{\Omega}$ outside a compact set. Such Hamiltonians are contact-at-infinity, and can be used to define a version of symplectic homology for the domain $\Omega$.

More generally, one can consider Hamiltonians which satisfy $H_{t}=(h_{t}\circ \pi) r_{\Omega}$ outside a compact set, where $h_{t}:Y\to \R$ is a 1-periodic family of functions and $\pi$ is the quotient projection $(T^{*}M-\text{zero section})\to Y$. Such Hamiltonian systems are considered in, e.g., \cite{merry_ulja,uljarevic}.

The first application of our methods is the following existence result:
\begin{theorem}\label{theorem:A}
  Any contact-at-infinity Hamiltonian $H_{t}$ which is non-positive on a fiber $T^{*}M_{q_{0}}$ will have at least one contractible 1-periodic orbit (we only require non-positivity outside of some starshaped domain).
\end{theorem}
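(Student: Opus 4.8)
The plan is to run the moduli-space construction of this paper --- the space of finite length Floer cylinders whose left end is a lift of a loop in a bordism representative $\mathfrak{B}$ and whose right end lies on the zero section, together with its smooth proper length function --- for a representative $\mathfrak{B}$ tailored to the fiber $T^{*}M_{q_{0}}$, and then to read off a contractible $1$-periodic orbit from the arbitrarily long Floer cylinders it produces. For $\mathfrak{B}$ I would take the constant loop at $q_{0}$, a representative of the non-zero class of a point in the bordism of the contractible component of the free loop space: its lifts are precisely the loops in the fiber $T^{*}M_{q_{0}}$, and non-positivity of $H_{t}$ there is exactly the hypothesis under which the construction and its a priori estimates apply. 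The construction then supplies a moduli space $\mathcal{M}(\mathfrak{B},H)$ and a smooth proper length function $L$ whose level sets, evaluated at the zero-section end, are families of loops cobordant to $\mathfrak{B}$. Since a point is not null-bordant, no level set $L^{-1}(c)$ with $c$ in the range of $L$ can be empty; together with properness of $L$ this forces the range of $L$ to be unbounded, so there are Floer cylinders $u_{k}\colon[0,L_{k}]\times S^{1}\to T^{*}M$ in $\mathcal{M}(\mathfrak{B},H)$ with $L_{k}\to\infty$, $u_{k}(0,\cdot)$ a loop in $T^{*}M_{q_{0}}$, and $u_{k}(L_{k},\cdot)$ a loop on the zero section.

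These long cylinders will come with uniform control --- again part of the ``certain properties'' the construction is designed to deliver. First, a $C^{0}$-bound: because $H_{t}$ is contact-at-infinity, the radial coordinate of the symplectization end is subharmonic along a Floer cylinder wherever it exceeds the level of the boundary data, so an integrated maximum principle confines every $u_{k}$ to a fixed compact set $K\subset T^{*}M$, uniformly in $k$. Second, a uniform energy bound $E(u_{k})\le E$: heuristically this comes from the action identity $E(u_{k})=\mathcal{A}_{H_{t}}(u_{k}(0,\cdot))-\mathcal{A}_{H_{t}}(u_{k}(L_{k},\cdot))$ together with position control on the two ends --- the Liouville-form term of the action vanishes on loops lying in a single fiber and on loops on the zero section, and the Hamiltonian terms are controlled by $\sup\abs{H_{t}}$ over the compact zero section at the right end and by the non-positivity of $H_{t}$ on $T^{*}M_{q_{0}}$ (outside the fixed starshaped domain) at the left end; in the Tonelli case the quadratic growth of $H$ would make such a bound immediate, and in the present contact-at-infinity setting the non-positivity hypothesis is the substitute input. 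Finally, exactness of $T^{*}M$ rules out bubbling of $J$-holomorphic spheres or of disks with exact Lagrangian boundary.

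With these bounds in hand I would extract the orbit by the standard breaking argument: subdivide $[0,L_{k}]$ into $\lfloor L_{k}\rfloor$ subintervals of length roughly one; since the total energy is $\le E$, one of them, say $[a_{k},a_{k}+1]$, carries energy $\le E/\lfloor L_{k}\rfloor\to 0$. The translates $v_{k}:=u_{k}(a_{k}+\,\cdot\,,\cdot)\colon[0,1]\times S^{1}\to K$ still solve Floer's equation for $H_{t}$, which is invariant under translation in $s$. By the $C^{0}$-bound, the absence of bubbling, and elliptic bootstrapping, a subsequence of the $v_{k}$ converges in $C^{\infty}_{\mathrm{loc}}$ on $(0,1)\times S^{1}$ to a Floer cylinder $v_{\infty}$ of zero energy; hence $\partial_{s}v_{\infty}\equiv 0$ and $v_{\infty}(s,t)=\gamma(t)$ for a $1$-periodic orbit $\gamma$ of $X_{H_{t}}$ (possibly constant, which still satisfies the conclusion). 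Each loop $u_{k}(s,\cdot)$ is homotopic within the cylinder to $u_{k}(0,\cdot)$, a loop in a single fiber and hence contractible in $T^{*}M$; so $\gamma$, a $C^{0}$-limit of such loops, is contractible. This $\gamma$ is the desired orbit.

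The part I expect to be routine is this last breaking argument. The real work, and the likely main obstacle, is twofold: (i) the a priori image bound for a contact-at-infinity --- not Tonelli --- Hamiltonian, where one has no coercivity to fall back on and must argue through a maximum principle in the symplectization end; and (ii) verifying that the moduli-space construction, with its proper length function, its cobordant level sets, and the uniform energy bound for the long cylinders it produces, genuinely applies to this choice of $\mathfrak{B}$ and to Hamiltonians of this type --- the content of the next section of the paper.
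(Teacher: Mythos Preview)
Your proposal is correct and follows essentially the same approach as the paper: take $\mathfrak{B}$ to be the constant loop at $q_{0}$ (so that $\kappa(\mathfrak{B},\Omega)=0$ and the non-positivity of $H_{t}$ on the fiber is exactly the sufficiently-negative condition), run the moduli-space machinery of Theorem~\ref{theorem:main} to produce arbitrarily long Floer cylinders with a uniform energy bound, and extract a contractible $1$-periodic orbit by a breaking argument. The paper phrases the breaking slightly differently---it reparametrizes from the right end to obtain a half-infinite cylinder and reads off the orbit as the negative asymptotic, rather than locating a unit-length low-energy piece---and its maximum principle (Theorem~\ref{theorem:maximum_principle}) does not proceed via subharmonicity of the radial coordinate but instead requires first-derivative bounds coming from bubbling analysis; these are minor technical variants of the same argument.
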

A more general result is proved in Theorem \ref{theorem:exist_1}.

We remark that there are contact-at-infinity Hamiltonians which have no 1-periodic orbits. For example, one can take a canonical transformation lifting an isotopy in the base without 1-periodic orbits (on a manifold with vanishing Euler characteristic). In this case, the generating Hamiltonians are linear and non-vanishing on each fiber.

\subsubsection{An estimate on the relative Gromov width of starshaped domains}
\label{sec:an-estimate-gromov}

Recall that the \emph{relative Gromov width} of a pair $(W,L)$, where $W$ is a symplectic manifold and $L$ is a Lagrangian submanifold, is the maximal $a>0$ for which there is a symplectic embedding $B(a)\to W$ which maps $B(a)\cap \R^{n}$ onto $L$, where $B(a)$ is the ball of symplectic capacity $a$. This notion is considered in \cite{barraud_cornea_1,barraud_cornea_2,biran_cornea_CRM,biran_cornea_geom_topol,biran_cornea_shelukhin,brocic_1}.

Our method produces an estimate for the relative Gromov width of starshaped domains $\Omega\subset T^{*}M$ under the assumption that $M$ has an $\R/\Z$-action $\zeta$ with non-contractible orbits.

\begin{theorem}\label{theorem:B}
  Let $M$ be a space with an $\R/\Z$-action with non-contractible orbits. Suppose that $\Omega$ is a star-shaped domain in $T^{*}M$. The relative Gromov width of $(\Omega,M)$ is at most:
  \begin{equation*}
    2\sup\set{\int_{\R/\Z} \gamma^{*}\lambda:\gamma(t)\text{ lies in $\Omega$ and lifts }\zeta(t,q)\text{ for some }q\in M}\in (0,\infty),
  \end{equation*}
  where $\zeta:\R/\Z\times M\to M$ is the $\R/\Z$-action.
\end{theorem}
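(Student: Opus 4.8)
The plan is to feed the bordism class of the orbits of $\zeta$ into the construction of \S\ref{sec:definition_of_moduli_space}, extract one arbitrarily long Floer cylinder whose image passes through the center of the embedded ball, and then bound its energy from above by an action estimate and from below by a relative version of Gromov's monotonicity lemma. So fix a symplectic embedding $\psi\colon B(a)\to\Omega$ with $\psi(B(a)\cap\R^{n})\subset M$; since the relative Gromov width is the supremum of all such $a$, it is enough to prove $a\le 2C$, where $C$ is the supremum in the statement. I would use as input the family $\mathfrak{B}=\set{t\mapsto\zeta(t,q):q\in M}$. Evaluating a loop at its basepoint carries $\mathfrak{B}$ to $\id_{M}$, so $\mathfrak{B}$ represents a nonzero bordism class; and, crucially, any family of loops in $M$ cobordant to $\mathfrak{B}$ becomes, after evaluating at basepoints, a map of a closed $n$-manifold to $M$ cobordant to $\id_{M}$, hence of mod $2$ degree one, hence surjective. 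Also $C\in(0,\infty)$: it is finite because $\overline{\Omega}$ is compact and the orbits of $\zeta$ have bounded speed, and positive because a nonconstant orbit $\zeta(\cdot,q)$ carries a lift $t\mapsto(\zeta(t,q),\epsilon\alpha_{t})$ lying in $\Omega$ with $\int_{\R/\Z}\alpha_{t}(\dot{\zeta}(t,q))\,dt>0$ for $\epsilon$ small.

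I would then apply the construction of \S\ref{sec:definition_of_moduli_space} to $\mathfrak{B}$ with a Hamiltonian $H_{t}$ that vanishes on a neighborhood of $\overline{\Omega}$ and is of contact type in the symplectization end, with the left-end condition arranged so that the left ends of the cylinders are lifts of loops of $\mathfrak{B}$ lying in $\overline{\Omega}$. Since the orbits of $\zeta$ are noncontractible, this produces a nonempty moduli space with a smooth proper length function $\ell$ taking arbitrarily large values, and for each regular value $T$ the level set $\ell^{-1}(T)$ is a closed $n$-manifold whose right-end loops form a family of loops in the zero section $M$ cobordant to $\mathfrak{B}$. Restricting beforehand to the part of the moduli space carrying the class $[\mathfrak{B}]$ (on which the right-end loops are noncontractible) and fixing a large regular value $T$, the surjectivity above applied to the basepoint-evaluation map $\ell^{-1}(T)\to M$ gives a Floer cylinder $u\in\ell^{-1}(T)$ whose right end is a noncontractible loop through $\psi(0)\in M$.

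Next I would bound the energy of $u$ both ways. Telescoping the Hamiltonian action $\mathcal{A}_{H}(\gamma)=\int_{\R/\Z}\gamma^{*}\lambda-\int_{\R/\Z}H_{t}(\gamma(t))\,dt$ along $u$ gives $E(u)=\mathcal{A}_{H}(\text{left end})-\mathcal{A}_{H}(\text{right end})$, independently of the behavior of $u$ in between. The right end lies on the zero section, where $\lambda$ and $H_{t}$ vanish, so its action is $0$; the left end is a lift of a $\zeta$-orbit lying in $\overline{\Omega}$, where $H_{t}=0$, so its action equals $\int_{\R/\Z}(\text{left end})^{*}\lambda\le C$. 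Hence $E(u)\le C$. On the other hand $\psi(B(a))\subset\Omega$ lies where $H_{t}=0$, so $u$ restricts over $\psi(B(a))$ to a $J$-holomorphic curve through $\psi(0)$; since the ends of $u$ are noncontractible loops neither lies inside the ball, and (using unique continuation to see that the relevant piece of $u$ genuinely escapes the ball) the relative monotonicity lemma, cf.\ \cite{barraud_cornea_1,barraud_cornea_2}, bounds the $\omega$-area of $u$ inside $\psi(B(a))$ below by $a/2$. Since this area is at most $E(u)\le C$, this gives $a\le 2C$, and taking the supremum over all embeddings $\psi$ proves the bound.

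The step I expect to be hardest is making the two halves meet with the sharp constant: one must choose the Hamiltonian and the left-end boundary data so that the hypotheses of \S\ref{sec:definition_of_moduli_space} hold --- properness of $\ell$ and the existence of arbitrarily long cylinders --- while at the same time confining the left-end loops to $\overline{\Omega}$ and identifying the action bound they satisfy with exactly $C$, so that one is not forced to enlarge the region and lose the constant. A second, more routine, point is the relative monotonicity step: the marked point through which $u$ is forced lies on the boundary of the cylinder and on the zero section rather than in its interior, which one handles by a Schwarz reflection across $M$ before invoking monotonicity, possibly after replacing $\psi(0)$ by a nearby point of $M$ off the left-end loop.
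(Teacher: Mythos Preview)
Your overall strategy matches the paper's, but two steps do not go through as written. First, the monotonicity step fails for finite-length cylinders: a noncontractible loop in $T^*M$ can intersect the embedded ball $\psi(B(a))$ (it just cannot be \emph{entirely} contained in it), so the left boundary $u(0,\cdot)$ need not map outside the ball, and the hypothesis of the relative monotonicity corollary in \S\ref{sec:monotonicity_theorem} is not met. The paper resolves this by passing to the limit $\ell\to\infty$: the reparametrized cylinders converge to a half-infinite solution $w:(-\infty,0]\times\R/\Z\to T^*M$ asymptotic at $-\infty$ to a $1$-periodic orbit of $X_H$; since $\d H=0$ on the image of the ball and this orbit is non-contractible (hence non-constant), the orbit is disjoint from the ball, so for $s\ll 0$ the slice $w(s,\cdot)$ lies outside the ball and monotonicity applies to a truncation. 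For this step the paper also chooses $J$ so that $J\d\iota=\d\iota J_{0}$ on the ball, making Schwarz reflection available.

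Second, the choice $H\equiv 0$ near $\overline{\Omega}$ together with an ad hoc constraint that left ends lie in $\overline\Omega$ does not fit the framework: the left-end condition in \S\ref{sec:definition_of_moduli_space} is the fiberwise Lagrangian condition $\pr\circ u(0,t)=\mathfrak{B}(\pi,t)$, not a confinement to a domain, and properness of $\ell$ requires the sufficiently-negative condition \eqref{eq:suff_neg}, which you do not verify (you flag this tension yourself but do not resolve it). The paper instead takes $H$ with $\max_\Omega H-\min_M H\le\epsilon$ and $H\le-\epsilon^{-1}$ on $\partial\Omega$, checks sufficient negativity directly, invokes Lemma~\ref{lemma:apriori_bound_kappa} for the bound $E(u)\le\epsilon-\kappa(\mathfrak{B},\Omega)$, and sends $\epsilon\to 0$. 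Note also a sign issue: with your convention $\mathcal{A}_H=\int\gamma^*\lambda-\int H$ one has $E(u)=\mathcal{A}_H(\text{right})-\mathcal{A}_H(\text{left})$, and the bound one actually obtains is $-\kappa(\mathfrak{B},\Omega)=-\inf\int\gamma^*\lambda$ rather than $C=\sup\int\gamma^*\lambda$; the paper converts between these by observing that $\Omega$ and $-\Omega$ have the same relative Gromov width.
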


One example is the manifold $M=Q\times \R/\Z$ where the action is $\zeta_{t}(q,\tau)=(q,\tau+t)$. Interestingly, the same hypothesis appears in \cite[Corollary 1.3]{irie_hz}, where the Hofer-Zehnder capacity of starshaped domains in cotangent bundles is considered.

In \S\ref{sec:sharpness}, the estimate from Theorem \ref{theorem:B} is proved to be sharp in some cases.

\subsection{Definition of the moduli space}
\label{sec:definition_of_moduli_space}
We now describe the method used to prove Theorems \ref{theorem:A} and \ref{theorem:B}. Fix admissible Hamiltonian $H_{t}$ and complex structure $J_{t}$ on the cotangent bundle $T^{*}M$ of a compact manifold $M^{n}$ (see \S\ref{sec:admissible_data} for the definition of admissibility). Also fix a smooth map $\mathfrak{B}:P\times \R/\Z\to M,$ considered as a family of loops $\mathfrak{B}(\pi,t)$ parametrized by a compact manifold $P^{d}$ without boundary. Associated to these choices define:
\begin{equation*}
  \mathscr{M}(H_{t},J_{t},\mathfrak{B})=\left\{(u,\pi,\ell):
    \begin{aligned}
      &u:[0,\ell]\times \R/\Z\to T^{*}M\text{ for }\ell>0,\\
      &\bd_{s}u+J_{t}(u)(\bd_{t}u-X_{H_{t}}(u))=0,\\
      &\pr\circ u(0,t)=\mathfrak{B}(\pi,t)\text{ and }u(\ell,t)\in M
    \end{aligned}
  \right\}.
\end{equation*}
In words, $\mathscr{M}(H_{t},J_{t},\mathfrak{B})$ is the space of finite length Floer cylinders whose left end is a lift of a loop in $\mathfrak{B}$, and whose right end lies on the zero section. A similar moduli space is considered in \cite[\S 5]{abouzaid_based_loop}, where $\mathfrak{B}(\pi,t)$ is some unstable manifold associated to a Riemannian energy functional on the based loop space.

\begin{figure}[H]
  \centering
  \begin{tikzpicture}[yscale=0.8]
    \draw[blue,rotate=-20] (-1,2) circle (0.5 and 1);
    \path[rotate=-20] (-1,3) arc (90:270:0.5 and 1) coordinate[pos=0] (A) coordinate[pos=1] (B);
    \path[rotate=-20] (-1,3) arc (90:180:0.5 and 1) coordinate[pos=1] (X);
    \node at (X) [left] {lift of a loop in $\mathfrak{B}$};
    \node at (X.east) [shift={(5,0)}] {\phantom{lift of a loop in $\mathfrak{B}$}};

    \draw[blue,rotate=20,shift={(0,-.6)}] (2,0) circle (1 and 0.5);
    \path[rotate=20,shift={(0,-.6)}] (3,0) arc (0:180:1 and 0.5) coordinate[pos=0] (C) coordinate[pos=1] (D);
    \draw (-2,-1)--(-1,1)--(4,1)--(3,-1)node[right]{$M$}--cycle;

    \draw[blue] (A)to[out=-10,in=100](C) (B)to[out=-10,in=100](D);
    \node[blue] at (1.3,1.5) {$u$};
  \end{tikzpicture}
  \caption{The left end of the Floer cylinder $u$ traces out a lift of one of the loops $\mathfrak{B}(\pi,t)$. The right end lies on the zero section.}
  \label{fig:1}
\end{figure}
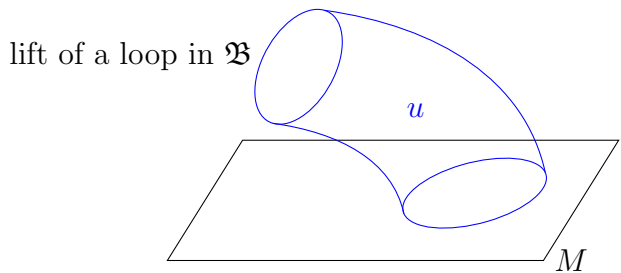
Before stating the main result, Theorem \ref{theorem:main}, we introduce a few necessary terms.

\subsubsection{Admissible data}
\label{sec:admissible_data}
Admissibility of $(H_{t},J_{t})$ is defined in terms of the behaviour at infinity. Writing $\rho_{\sigma}(q,p)=(q,e^{\sigma}p)$ for the \emph{Liouville flow}, we say $H_{t},J_{t}$ is \emph{admissible for a starshaped domain} $\Omega$ (e.g., a unit disk bundle for a metric) if $H_{t}\circ \rho_{\sigma}(z)=e^{\sigma}H_{t}(z)$ and $J_{\rho_{\sigma}(z)}\d\rho_{\sigma,z}=\d\rho_{\sigma,z}J_{t,z}$ hold for all $\sigma\ge 0$ and $z\not\in \Omega$. Moreover, we require that $H_{t}$ has no 1-periodic orbits outside $\Omega$. Briefly, $H_{t}$ is $1$-homogeneous and $J_{t}$ is translation invariant in the ends. See \S\ref{sec:admissible_hamiltonians} for more details.

\subsubsection{Bordism classes of loops}
\label{sec:bordism_intro}
$\mathfrak{B}_{1}:P_{1}\times \R/\Z\to M$ and $\mathfrak{B}_{2}:P_{2}\times \R/\Z\to M$ are \emph{cobordant} provided that the compact manifolds $P_{1},P_{2}$ are cobordant, say $P_{1}\sqcup P_{2}=Q$, and there is a function $Q\times \R/\Z\to M$ which restricts to $\mathfrak{B}_{1},\mathfrak{B}_{2}$ at the two boundary components. Notice that the restricted evaluation maps $\mathfrak{B}_{1}(-,t_{0})$ and $\mathfrak{B}_{1}(-,t_{0})$ are also cobordant maps of $P_{1},P_{2}$ into $M$.

\subsubsection{Sufficiently negative Hamiltonians}
\label{sec:sufficiently_negative_hamiltonians}
Let $\mathfrak{B}(\pi,t)$ be a family of loops in $M$, and let $H_{t}$ be an admissible Hamiltonian for a star-shaped domain $\Omega\subset T^{*}M$. Say that $H_{t}$ is \emph{sufficiently negative} for $\mathfrak{B},\Omega$ provided that:
\begin{equation}\label{eq:suff_neg}
  \gamma(t)\not\in \Omega\text{ for all $t$ and }\mathrm{pr}(\gamma(t))=\mathfrak{B}(\pi,t)\text{ for some }\pi\implies\mathscr{A}(\gamma)\le 0.
\end{equation}
Here $\mathscr{A}(\gamma)=\int H_{t}(\gamma)-\gamma^{*}\lambda$, see \S\ref{sec:hamiltonian_action}. Since $\mathscr{A}(\rho_{\sigma}(\gamma))=e^{\sigma}\mathscr{A}(\gamma)$ for loops satisfying the hypothesis of \eqref{eq:suff_neg}, it is clear that the sufficiently negative condition is necessary to ensure an a priori bound on the action of lifts of $\mathfrak{B}$. The condition is also sufficient, as proved in Proposition \ref{prop:suff_neg_sufficient}.

\subsection{Statement of the main result}
\label{sec:statement_of_main_result}
Let $(H_{t},J_{t})$ be admissible for a starshaped domain and $\mathfrak{B}$ a family of loops, let $\mathscr{M}:=\mathscr{M}(H_{t},J_{t},\mathfrak{B})$, and let $\ell:\mathscr{M}\to (0,\infty)$ be the coordinate projection $(u,\pi,\ell)\mapsto \ell$. Abbreviate $d=\dim P$.
\begin{theorem}\label{theorem:main}
  If $H_{t}$ is sufficiently negative for $\mathfrak{B}$, $\Omega$, then $\ell:\mathscr{M}\to (0,\infty)$ is proper. Moreover, there exist $C^{\infty}$ small perturbations $\delta_{t}$ supported in $\Omega$ so that $$\mathscr{M}':=\mathscr{M}(H_{t}+\delta_{t},J_{t},\mathfrak{B})$$ is a smooth manifold of dimension $d+1$. The map $\ell$ is smooth, and for any regular value $\ell_{0}$, the fiber $\mathscr{M}'(\ell_{0}):=\mathscr{M}'\cap \set{\ell=\ell_{0}}$ is a compact $d$-dimensional manifold. The evaluation map:
  \begin{equation*}
    \mathrm{ev}_{\ell_{0}}:(u,\pi,\ell_{0},t)\in \mathscr{M}'(\ell_{0})\times \R/\Z\mapsto u(\ell_{0},t)\in M,
  \end{equation*}
  is smooth and cobordant (as a bordism class of loops) to the original class $\mathfrak{B}$. Moreover, the projection map $\pi:\mathscr{M}'(\ell_{0})\to P$ has mapping degree $1$. In particular $\mathscr{M}'(\ell_{0})$ is never empty.
\end{theorem}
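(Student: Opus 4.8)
The plan is to establish the claims in the order stated. \emph{Step 1 (properness).} For $(u,\pi,\ell)\in\mathscr{M}$ the energy identity reads $E(u)=\mathscr{A}(u(0,\cdot))-\mathscr{A}(u(\ell,\cdot))\ge 0$. The loop $u(\ell,\cdot)$ lies on the zero section, where $\lambda$ vanishes, so $\mathscr{A}(u(\ell,\cdot))=\int H_{t}(u(\ell,t))\,dt$ is bounded in absolute value by $\sup_{M}|H_{t}|$; since $E(u)\ge 0$ this also bounds $\mathscr{A}(u(0,\cdot))$ from below. Because $u(0,\cdot)$ is a lift of a loop in $\mathfrak{B}$, Proposition~\ref{prop:suff_neg_sufficient} gives an a priori upper bound on $\mathscr{A}(u(0,\cdot))$, and in fact (this is the substance of that proposition) an a priori $C^{0}$-bound on the loop $u(0,\cdot)$ in $T^{*}M$. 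Feeding this into the maximum principle for the radial coordinate --- valid since $(H_{t},J_{t})$ is admissible outside $\Omega$ --- yields a uniform $C^{0}$-bound on $u$ once $\ell$ is confined to a compact subset of $(0,\infty)$. As $\d\lambda$ is exact and the zero section is an exact Lagrangian there is no bubbling, and as $\ell$ stays bounded and bounded away from $0$ there is no breaking; Gromov--Floer compactness then produces $C^{\infty}_{\mathrm{loc}}$-limits in $\mathscr{M}$, so $\ell$ is proper.

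\emph{Step 2 (transversality).} Run the Sard--Smale argument on the universal moduli space, with $\delta_{t}$ ranging over a Banach space of functions supported in $\Omega$. Each $u\in\mathscr{M}$ meets $\Omega$, since its right end lies on the zero section $\subset\Omega$; by unique continuation (the zero section being Lagrangian) $u$ is not locally constant near $s=\ell$ unless it is a constant cylinder, and for generic $\delta_{t}$ the function $H_{t}+\delta_{t}$ has no critical points on the zero section (here $n\ge1$), so there are no constant cylinders. Hence every $u$ carries somewhere-injective points inside $\Omega$ arbitrarily close to $s=\ell$, which suffices for generic $\delta_{t}$ to make the relevant linearized operators surjective. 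The index of the linearized boundary value problem --- left end in the cotangent fibres over $\mathfrak{B}(\pi,\cdot)$ with $\pi$ free in $P$, right end on the zero section, $\ell$ free --- equals $\dim P+1=d+1$ (as is visible already for the short cylinders of Step~3), so $\mathscr{M}':=\mathscr{M}(H_{t}+\delta_{t},J_{t},\mathfrak{B})$ is a smooth $(d+1)$-manifold. Since $\delta_{t}$ is $C^{\infty}$-small and supported in $\Omega$, neither the sufficiently-negative hypothesis (which only constrains lifts lying outside $\Omega$, where $\delta_{t}=0$) nor the admissibility outside $\Omega$ is affected, so $\ell$ remains proper on $\mathscr{M}'$; being a smooth proper function, its level set $\mathscr{M}'(\ell_{0})=\ell^{-1}(\ell_{0})$ over a regular value $\ell_{0}$ is a compact $d$-manifold without boundary.

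\emph{Step 3 (behaviour as $\ell\to 0$).} Rescaling $s=\ell\tau$ turns Floer's equation into $\partial_{\tau}u+\ell\,J_{t}(u)(\partial_{t}u-X_{H_{t}}(u))=0$ on $[0,1]\times\R/\Z$. At $\ell=0$ this says $\partial_{\tau}u=0$, so $u$ is $\tau$-independent, and the two boundary conditions force $u\equiv\gamma_{\pi}$ where $\gamma_{\pi}(t):=(\mathfrak{B}(\pi,t),0)$ is the zero-section lift of $\mathfrak{B}(\pi,\cdot)$. The linearization of the rescaled equation in the $u$-direction at $(\ell,u)=(0,\gamma_{\pi})$ is $\xi\mapsto\partial_{\tau}\xi$ on sections $\xi$ with $\xi(0,\cdot)$ vertical (from $\pr(u(0,\cdot))=\mathfrak{B}(\pi,\cdot)$) and $\xi(1,\cdot)$ horizontal (from $u(1,\cdot)\in M$); decomposing $\xi$ into vertical and horizontal parts one checks directly that this operator is an isomorphism. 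The implicit function theorem then gives, for each $\pi$ and each small $\ell>0$, a unique Floer cylinder near $\gamma_{\pi}$ depending smoothly on $(\pi,\ell)$, and a small-energy argument ($\int_{[0,1]\times\R/\Z}|\partial_{\tau}u|^{2}=\ell\,E(u)\to 0$ forces $u$ to be $C^{1}$-close to a zero-section lift of $\mathfrak{B}$) shows these exhaust $\mathscr{M}'\cap\{\ell<\epsilon\}$. Thus near $\ell=0$ the manifold $\mathscr{M}'$ is diffeomorphic to $P\times(0,\epsilon)$, and adjoining $P\times\{0\}$ yields a collar $P\times[0,\epsilon)$. Combined with Step~1, for a regular value $\ell_{0}$ the set $Q:=(\mathscr{M}'\cap\{\ell\le\ell_{0}\})\cup(P\times\{0\})$ is a compact $(d+1)$-manifold with $\partial Q=\mathscr{M}'(\ell_{0})\sqcup P$.

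\emph{Step 4 (cobordism and degree).} Define $G:Q\times\R/\Z\to M$ by $G(u,\pi,\ell,t)=\pr(u(\ell^{2}/\ell_{0},t))$ (note $\ell^{2}/\ell_{0}\in[0,\ell]$ on $Q$), with $G(\gamma_{\pi},\pi,0,t):=\mathfrak{B}(\pi,t)$ on the $\ell=0$ boundary; in the rescaled coordinate this is $\pr$ of a smooth map of $(\ell,\pi,t)$, hence continuous (indeed smooth) on $Q\times\R/\Z$. On $\mathscr{M}'(\ell_{0})$ it equals $\pr(u(\ell_{0},t))=u(\ell_{0},t)=\mathrm{ev}_{\ell_{0}}$, and on $P$ it equals $\mathfrak{B}$; since $\partial Q=\mathscr{M}'(\ell_{0})\sqcup P$, this exhibits $\mathrm{ev}_{\ell_{0}}$ as cobordant, as a bordism class of loops, to $\mathfrak{B}$. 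Moreover the coordinate $(u,\pi,\ell)\mapsto\pi$ extends over $Q$ to a map $Q\to P$ equal to the projection $\pi$ of the theorem on $\mathscr{M}'(\ell_{0})$ and to $\id_{P}$ on $P$, so $Q$ is a bordism over $P$ from $(\mathscr{M}'(\ell_{0}),\pi)$ to $(P,\id)$; pushing forward fundamental classes gives $\pi_{*}[\mathscr{M}'(\ell_{0})]=[P]$, so $\pi$ has degree $1$, and as $[P]\ne 0$ the level set $\mathscr{M}'(\ell_{0})$ is non-empty. This holds for every regular value $\ell_{0}$, and regular values are dense and unbounded in $(0,\infty)$, so the level sets are non-empty arbitrarily far out --- the promised arbitrarily long cylinders. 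I expect the main obstacle to be Step~3: pinning down the $\ell\to 0$ stratum, controlling short cylinders, and verifying invertibility of the degenerate linearized operator there, together with the point in Step~2 that perturbations confined to $\Omega$ already suffice for transversality because every cylinder carries somewhere-injective points inside $\Omega$.
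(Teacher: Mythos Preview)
Your overall architecture matches the paper's, and Steps~2 and~4 are essentially correct, but Steps~1 and~3 each contain a real gap. In Step~1, Proposition~\ref{prop:suff_neg_sufficient} does \emph{not} give a $C^{0}$-bound on $u(0,\cdot)$: it bounds only the action, and a lift of $\mathfrak{B}(\pi,\cdot)$ can make arbitrarily high fiberwise excursions on a short time interval while keeping $\mathscr{A}$ bounded (the integrand scales like $e^{\sigma}$ but the interval can be made short). Nor does admissibility supply a ``maximum principle for the radial coordinate'': that requires $J$ of SFT type, whereas the paper only assumes $J$ is translation-invariant at infinity. The paper's mechanism is different and never controls the boundary loops separately: the energy bound plus bubbling analysis (\S\ref{sec:apriori_first_derivative}) first yields a priori \emph{first-derivative} bounds, and then Theorem~\ref{theorem:maximum_principle}, whose hypotheses explicitly include those derivative bounds, delivers the $C^{0}$-bound directly.

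In Step~3 your rescaling $s=\ell\tau$ is not conformal, so the limiting operator $\xi\mapsto\partial_\tau\xi$ on $[0,1]\times\R/\Z$ is not elliptic and not Fredholm from $W^{1,p}$ to $L^{p}$ (integrating an $L^{p}$ right-hand side in $\tau$ gains no $t$-regularity); and even on spaces where $\partial_\tau$ happens to be invertible, the perturbation $\ell J\partial_t$ is of the \emph{same} order rather than lower, so it is not operator-small as $\ell\to 0$ and the implicit function theorem yields no uniform $\ell$-neighborhood. The paper instead uses the \emph{conformal} rescaling $v(s,t)=u(\ell s,\ell t)$ on the lengthening cylinder $[0,1]\times\R/\ell^{-1}\Z$, which keeps the equation of Cauchy--Riemann type for every $\ell>0$; the linearizations are then compared via a cutoff argument (\S\ref{sec:uniformly_injective}) to the invertible Cauchy--Riemann operator on the infinite strip $[0,1]\times\R$, giving uniformly bounded inverses and hence a clean Newton iteration (\S\ref{sec:existence_uniqueness}). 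You were right to flag Step~3 as the crux; the fix is precisely this change of rescaling.
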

The proof of the theorem is the main goal of the paper. The compactness statements are proved in \S\ref{sec:compactness}, while the transversality and gluing statements are proved in \S\ref{sec:transversality_gluing}.

One can interpret the main result as an \emph{evolution equation}, in the sense that one begins with some initial condition $\mathfrak{B}$ and evolves it using the non-linear Floer equation, obtaining cobordant families of loops. Unlike the typical \emph{parabolic} evolution equations (such as heat flow), the Floer equation is an \emph{elliptic} equation, and it is well-known that the flow is not well-posed. The sufficiently negative assumption in Theorem \ref{theorem:main} is necessary to control the energy of the Floer cylinders; without this control one would not be able to conclude the existence of solutions for large values of the parameter $\ell$.

\subsubsection{Applying the main theorem}
\label{sec:applying}
Observe that, if $(u_{n},\pi_{n},\ell_{n})\in \mathscr{M}(H_{t}+\delta^{n}_{t},J_{t},\mathfrak{B})$, with $\delta^{n}_{t}\to 0$ and $\ell_{n}\to \infty$, then a standard compactness-up-to-breaking argument in Floer theory implies the existence of finite-energy solutions to the problem:
\begin{equation}\label{eq:compactness-up-to-breaking}
  \left\{
    \begin{aligned}
      &u:(-\infty,0]\times \R/\Z\to T^{*}M\\
      &\bd_{s}u+J_{t}(u)(\bd_{t}u-X_{H_{t}})=0\\
      &u(0,t)\in M,
    \end{aligned}
  \right.
\end{equation}
The finiteness of energy implies that such solutions will be asymptotic at their negative end to 1-periodic orbits of $X_{H_{t}}$ as in Figure \ref{fig:3}. See \S\ref{sec:length_diverging} for more details.

\begin{figure}[H]
  \centering
  \begin{tikzpicture}[xscale=1.5,yscale=0.75]
    \draw (0,0) arc (-90:270:0.2 and 1)coordinate[pos=0.75](B)--+(3,0) arc (-90:90:0.2 and 1)coordinate[pos=0.5](A) coordinate[pos=1](Y)--+(-3,0);
    \draw[dashed] (Y) arc (90:270:0.2 and 1);
    \node at (B)[left]{orbit of $X_{H_{t}}$};
    \node at (A)[right]{lies on zero section};
  \end{tikzpicture}
  \caption{Compactness up-to-breaking implies the existence of a half-infinite Floer cylinder whose left end is asymptotic to a Hamiltonian orbit, and whose right end lies on the zero section.}
  \label{fig:3}
\end{figure}
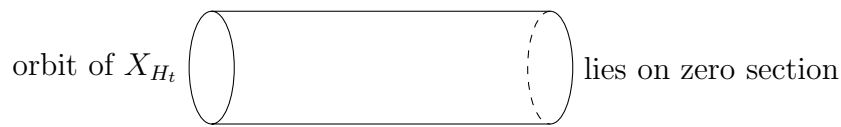

The moduli space $\mathscr{M}(H_{t}+\delta_{t},J_{t},\mathfrak{B})\cap \set{\ell=\ell_{0}}$ must be non-empty for any regular value $\ell_{0}$. Thus take $\ell_{0}$ as large as desired, and apply the compactness-up-to-breaking argument to conclude the existence of solutions to \eqref{eq:compactness-up-to-breaking}. The following a priori energy bound controls the energy of the solution.

\subsubsection{A priori energy bound}
\label{sec:priori-energy-bound}
The following result is important in our applications.
\begin{lemma}\label{lemma:apriori_bound_kappa}
  Let $H_{t},J_{t}$ be admissible for $\Omega$ and suppose $H_{t}$ is sufficiently negative for $\mathfrak{B},\Omega$. Then there is an a priori energy bound for $u\in \mathscr{M}(H_{t},J_{t},\mathfrak{B})$:
  \begin{equation*}
    E(u)=\int \norm{\bd_{s}u}^{2}_{J_{t}}\d s\d t\le \int_{0}^{1} \max_{\Omega} H_{t}-\min_{M}H_{t}\,\d t-\kappa(\mathfrak{B},\Omega),
  \end{equation*}
  where:
  \begin{equation*}
    \kappa(\mathfrak{B},\Omega):=\inf\set{\int_{\R/\Z} \gamma^{*}\lambda:\gamma(t)\text{ is a lift of some }\mathfrak{B}(\pi,t)\text{ lying in }\Omega}\in (-\infty,0].
  \end{equation*}
\end{lemma}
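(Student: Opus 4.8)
The plan is to express the Floer energy as a difference of Hamiltonian actions at the two ends of the cylinder and then to estimate each end separately. Write $\gamma_{0}=u(0,\cdot)$ and $\gamma_{\ell}=u(\ell,\cdot)$, so that $\gamma_{0}$ is a lift of some $\mathfrak{B}(\pi,\cdot)$ and $\gamma_{\ell}$ is a loop in the zero section $M$. First I would record the standard identity coming from compatibility of $J_{t}$ with the symplectic form together with Floer's equation, namely $\norm{\bd_{s}u}^{2}_{J_{t}}=u^{*}\omega(\bd_{s},\bd_{t})-\bd_{s}\bigl(H_{t}(u)\bigr)$ pointwise; integrating over $[0,\ell]\times\R/\Z$ and applying Stokes' theorem to $\int u^{*}\omega=\int u^{*}\d\lambda$ (the boundary terms being $\gamma_{\ell}^{*}\lambda$ and $-\gamma_{0}^{*}\lambda$) gives the energy--action identity $E(u)=\mathscr{A}(\gamma_{0})-\mathscr{A}(\gamma_{\ell})$, with the sign conventions of \S\ref{sec:hamiltonian_action} for which $\mathscr{A}(\rho_{\sigma}\gamma)=e^{\sigma}\mathscr{A}(\gamma)$ on loops outside $\Omega$.

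The right-hand end is handled immediately: the tautological one-form restricts to zero on the zero section, so $\int_{\R/\Z}\gamma_{\ell}^{*}\lambda=0$ and hence $\mathscr{A}(\gamma_{\ell})=\int_{0}^{1}H_{t}(\gamma_{\ell}(t))\,\d t\ge\int_{0}^{1}\min_{M}H_{t}\,\d t$. For the left-hand end I would invoke the a priori action bound supplied by the sufficiently-negative hypothesis (the sufficiency half of Proposition \ref{prop:suff_neg_sufficient}), in the sharp form $\mathscr{A}(\gamma_{0})\le\int_{0}^{1}\max_{\Omega}H_{t}\,\d t-\kappa(\mathfrak{B},\Omega)$. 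The proof of this bound splits according to whether the lift $\gamma_{0}$ stays in $\overline{\Omega}$: if it does, the estimate is immediate, since then $\int_{0}^{1}H_{t}(\gamma_{0})\,\d t\le\int_{0}^{1}\max_{\Omega}H_{t}\,\d t$ while $\int_{\R/\Z}\gamma_{0}^{*}\lambda\ge\kappa(\mathfrak{B},\Omega)$ straight from the definition of $\kappa$; if it does not, one rescales $\gamma_{0}$ outward by the Liouville flow until it lies entirely outside $\Omega$, applies the defining property \eqref{eq:suff_neg} of sufficient negativity there, and transports the conclusion back using the homogeneity of $\mathscr{A}$ (together with $\kappa(\mathfrak{B},\Omega)\le0$, witnessed by the zero-section lift). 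Subtracting the two end estimates from the energy--action identity then yields $E(u)\le\int_{0}^{1}(\max_{\Omega}H_{t}-\min_{M}H_{t})\,\d t-\kappa(\mathfrak{B},\Omega)$.

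The one genuinely delicate step is the left-end estimate. The difficulty is that $\gamma_{0}$ is unconstrained in the cotangent fibers, so both $\int H_{t}(\gamma_{0})$ and $\int\gamma_{0}^{*}\lambda$ are a priori unbounded, and it is only the sufficiently-negative hypothesis — designed precisely so that these two terms cancel to leading order under the Liouville rescaling, thanks to the $1$-homogeneity of $H_{t}$ at infinity and $\rho_{\sigma}^{*}\lambda=e^{\sigma}\lambda$ — that makes $\mathscr{A}(\gamma_{0})$ controllable. By contrast, the energy--action identity and the zero-section estimate are routine, so the weight of the argument is carried by Proposition \ref{prop:suff_neg_sufficient}.
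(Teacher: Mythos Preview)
Your overall architecture is exactly the paper's: write $E(u)=\mathscr{A}(u(0))-\mathscr{A}(u(\ell))$, bound the right end below by $\int\min_{M}H_{t}\,\d t$ since $\lambda|_{M}=0$, and bound the left end above via Proposition~\ref{prop:suff_neg_sufficient} followed by the trivial estimate $\mathscr{A}(\eta)\le\int\max_{\Omega}H_{t}\,\d t-\kappa(\mathfrak{B},\Omega)$ for lifts $\eta$ lying in $\Omega$.

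The one genuine gap is in your sketch of Proposition~\ref{prop:suff_neg_sufficient} in the case where $\gamma_{0}$ does not stay in $\Omega$. Rescaling \emph{outward} and invoking homogeneity does not work: the relation $\mathscr{A}(\rho_{\sigma}\gamma)=e^{\sigma}\mathscr{A}(\gamma)$ requires $\gamma$ to lie entirely outside $\Omega$ (since $H_{t}$ is only $1$-homogeneous there), so from $\mathscr{A}(\rho_{\sigma}\gamma_{0})\le 0$ you recover nothing about $\mathscr{A}(\gamma_{0})$ when $\gamma_{0}$ is partly inside; and if $\gamma_{0}$ meets the zero section no amount of outward flow pushes it out of $\Omega$ at all. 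The paper's actual mechanism runs the Liouville flow the other way. First one shows the \emph{pointwise} density $H_{t}(\gamma_{0})\,\d t-\gamma_{0}^{*}\lambda$ is non-positive on each arc where $\gamma_{0}$ lies outside $\Omega$: otherwise, extending that arc to a full lift $\xi$ outside $\Omega$ and scaling by a large positive $\beta(t)$ supported on the arc makes $\mathscr{A}(\rho_{\beta}\xi)$ arbitrarily large via \eqref{eq:identity_rescaling}, contradicting \eqref{eq:suff_neg}. Then one rescales those arcs \emph{inward} by a time-dependent $\beta(t)\le 0$ onto $\bd\Omega$; by \eqref{eq:identity_rescaling} this only increases the action, and the resulting loop now lies in $\Omega$ where your direct estimate applies. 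So your citation of Proposition~\ref{prop:suff_neg_sufficient} is the right move, but the mechanism behind it is inward contraction governed by a pointwise sign, not outward homogeneity.
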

\begin{proof}
  This is proved in \S\ref{sec:suff_neg}.
\end{proof}

\subsection{Proof of Theorem \ref{theorem:A}}
\label{sec:existence_orbits}

Let $P$ be a connected manifold, and fix some family $\mathfrak{B}:P\times \R/\Z\to M$. Since $P$ is connected, $\mathfrak{B}$ determines a free homotopy class, denoted $[\mathfrak{B}]$.

Then we have the following existence result, similar to results in \cite{biran_polterovich_salamon,weber_noncontractible}. The non-positive quantity $\kappa(\mathfrak{B},\Omega)$ is defined in Lemma \ref{lemma:apriori_bound_kappa}.
\begin{theorem}\label{theorem:exist_1}
  If $H_{t}$ is admissible for $\Omega$ and $H_{t}<\kappa(\mathfrak{B},\Omega)$ holds on the set: $$\bd\Omega \cap \pr^{-1}(\mathfrak{B}(P\times\R/\Z)),$$ then $X_{H_{t}}$ has a $1$-periodic orbit in $\Omega$ whose projection to $M$ lies in the free homotopy class $[\mathfrak{B}]$.
\end{theorem}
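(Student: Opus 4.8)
The plan is to deduce Theorem \ref{theorem:exist_1} from Theorem \ref{theorem:main}, the a priori bound of Lemma \ref{lemma:apriori_bound_kappa}, and a compactness-up-to-breaking argument of the type sketched in \S\ref{sec:applying}. The first step is to check that the hypothesis makes $H_t$, and in fact $H_t+\delta_t$ for any $C^\infty$-small $\delta_t$ supported in $\Omega$, sufficiently negative for $\mathfrak{B},\Omega$ in the sense of \eqref{eq:suff_neg}. Given a loop $\gamma$ lying entirely outside $\Omega$ with $\pr\circ\gamma=\mathfrak{B}(\pi,\cdot)$, one writes $\gamma(t)=\rho_{\sigma(t)}(\eta(t))$ with $\eta(t)\in\bd\Omega$ and $\sigma(t)\ge 0$; the identity $\rho_\sigma^{*}\lambda=e^{\sigma}\lambda$ together with the $1$-homogeneity of $H_t$ outside $\Omega$ gives $\mathscr{A}(\gamma)=\int_{0}^{1}e^{\sigma(t)}\bigl(H_t(\eta(t))-\lambda_{\eta(t)}(\dot\eta(t))\bigr)\,\d t$, and one checks, using that $H_t<\kappa(\mathfrak{B},\Omega)$ on $\bd\Omega\cap\pr^{-1}(\mathfrak{B}(P\times\R/\Z))$ and that $\kappa(\mathfrak{B},\Omega)$ is the infimum of $\int_{\R/\Z}\gamma^{*}\lambda$ over lifts of $\mathfrak{B}$ inside $\Omega$, that the integrand is $\le 0$; this is the step in which the precise form of the hypothesis is used. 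Since a perturbation supported in $\Omega$ affects neither admissibility nor \eqref{eq:suff_neg}, Theorem \ref{theorem:main} and Lemma \ref{lemma:apriori_bound_kappa} apply to $H_t+\delta_t$, and the energy bound of Lemma \ref{lemma:apriori_bound_kappa} is uniform as $\delta_t\to 0$ (its right side depends only on $\max_\Omega H_t$, $\min_M H_t$ and $\kappa(\mathfrak{B},\Omega)$).

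Next I would pick perturbations $\delta^{n}_{t}\to 0$ supported in $\Omega$, each making $\mathscr{M}(H_t+\delta^{n}_{t},J_t,\mathfrak{B})$ a smooth manifold as in Theorem \ref{theorem:main}, and regular values $\ell_n\to\infty$ of the smooth function $\ell$ on it; by the last sentence of Theorem \ref{theorem:main} there are $(u_n,\pi_n,\ell_n)$ in the moduli space with $\ell(u_n)=\ell_n\to\infty$, and Lemma \ref{lemma:apriori_bound_kappa} bounds $E(u_n)$ uniformly. I would then run compactness-up-to-breaking: $T^{*}M$ carries an exact symplectic form and the zero section is an exact Lagrangian, so there is no bubbling; the maximum principle supplied by admissibility (unaffected by $\delta^{n}_{t}$) confines the $u_n$ to a fixed compact subset of $T^{*}M$; and since $\ell_n\to\infty$ with bounded energy, after translating the $s$-coordinate to put the right end at $s=0$ and passing to a subsequence, the $u_n$ converge in $C^{\infty}_{\mathrm{loc}}$ on $(-\infty,0]\times\R/\Z$ to a finite-energy solution $v$ of \eqref{eq:compactness-up-to-breaking} for $H_t,J_t$, as in Figure \ref{fig:3}.

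Finiteness of energy forces $v(s,\cdot)$ to converge as $s\to-\infty$ to a $1$-periodic orbit $x$ of $X_{H_t}$ (if $E(v)=0$ then $v$ is $s$-independent and $x:=v(0,\cdot)$ is itself such an orbit, lying on the zero section). By admissibility $X_{H_t}$ has no $1$-periodic orbit outside $\Omega$, so $x$ lies in $\Omega$. For the homotopy class: $\pr\circ u_n$ is a homotopy in $M$ from $\mathfrak{B}(\pi_n,\cdot)$ to the loop $u_n(\ell_n,\cdot)\subset M$, so $u_n(\ell_n,\cdot)$ represents $[\mathfrak{B}]$; since $u_n(\ell_n,\cdot)\to v(0,\cdot)$ this passes to the limit, and $\pr\circ v$ is then a homotopy in $M$ from $v(0,\cdot)$ to $\pr\circ x$. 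Hence $\pr\circ x\in[\mathfrak{B}]$, and $x$ is the orbit Theorem \ref{theorem:exist_1} asks for.

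I expect the principal difficulty to be the first step — extracting \eqref{eq:suff_neg} from the pointwise inequality on $\bd\Omega\cap\pr^{-1}(\mathfrak{B}(P\times\R/\Z))$, i.e.\ bounding $\mathscr{A}(\gamma)$ for lifts of $\mathfrak{B}$ that stray far from $\Omega$ — since, once the uniform energy bound together with the $C^{0}$-confinement and no-escape estimates of \S\ref{sec:compactness} are granted, the remaining steps are routine. A secondary point needing care is the convergence of $v$ at its negative end to an honest $1$-periodic orbit, which relies on the absence of $1$-periodic orbits at infinity and the confinement of $v$ to a compact region.
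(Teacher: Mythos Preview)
Your overall plan matches the paper's --- verify sufficient negativity, apply Theorem \ref{theorem:main} and Lemma \ref{lemma:apriori_bound_kappa}, then run compactness-up-to-breaking --- and the second and third steps are fine. The gap is exactly where you suspected.

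The assertion that ``the integrand is $\le 0$'' in
\[
\mathscr{A}(\gamma)=\int_{0}^{1}e^{\sigma(t)}\bigl(H_t(\eta(t))-\lambda_{\eta(t)}(\dot\eta(t))\bigr)\,\d t
\]
does not follow from the hypothesis. Write $\eta(t)=(q(t),p(t))$ with $q(t)=\mathfrak{B}(\pi,t)$ and $p(t)\in\bd\Omega_{q(t)}$, and set $\Lambda(t):=\min\{\langle p,q'(t)\rangle:p\in\Omega_{q(t)}\}$, so that $\lambda_{\eta}(\dot\eta)=\langle p(t),q'(t)\rangle\ge\Lambda(t)$ and $\kappa=\int_0^1\Lambda(\tau)\,\d\tau$. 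The hypothesis gives only $H_t(\eta(t))<\kappa$, which is the \emph{average} of $\Lambda$, not a pointwise bound. At a time $t_0$ where $\Lambda(t_0)<\kappa$ (such times exist unless $\Lambda$ is constant) and $p(t_0)$ is near the minimizer, the bracket $H_{t_0}(\eta(t_0))-\lambda_{\eta(t_0)}(\dot\eta(t_0))$ can be strictly positive; concentrating $\sigma(t)$ near $t_0$ then produces lifts $\gamma$ outside $\Omega$ with $\mathscr{A}(\gamma)>0$. So $H_t$ is in general \emph{not} sufficiently negative for the given $\mathfrak{B}$, and your first step fails as written.

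The missing idea, supplied in the paper, is to first replace $\mathfrak{B}$ by a time reparametrization (same free homotopy class, same $\kappa$) chosen so that $\Lambda(t)$ is approximately constant: $\kappa-\delta\le\Lambda(t)\le\kappa+\delta$ for all $t$. After this ``generalized constant-speed'' reparametrization the pointwise estimate
\[
H_t(\eta(t))-\lambda_{\eta(t)}(\dot\eta(t))\le H_t(\eta(t))-\Lambda(t)\le H_t(\eta(t))-\kappa+\delta<0
\]
holds for $\delta$ small, by compactness of $\bd\Omega\cap\pr^{-1}(\mathfrak{B}(P\times\R/\Z))$ and the strict inequality $H_t<\kappa$ there. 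With this modification your argument goes through verbatim.
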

\begin{proof}
  The idea is simple, show that $H_{t}$ is sufficiently negative for $\mathfrak{B},\Omega$ and apply the argument in \S\ref{sec:applying}. In fact, we will not work directly with $\mathfrak{B}$, but rather with a time reparametrization of $\mathfrak{B}$. To simplify the exposition, let us suppose that $P=\set{\pi}$ is a single point, and write $q(t):=\mathfrak{B}(\pi,t)$, $\kappa:=\kappa(\mathfrak{B},\Omega)$.

  A lift in $\Omega$ can be written as $\gamma(t)=(q(t),p(t))$ where $p(t)\in T^{*}M_{q(t)}$. Then:
  \begin{equation*}
    \gamma^{*}\lambda=\ip{p(t),q'(t)}\d t\ge \min\set{\ip{p,q'(t)}:p\in T^{*}M_{q(t)}\cap \Omega}\d t=:\Lambda(t)\d t.
  \end{equation*}
  The quantity $\Lambda(t)$ is simply the (negative) minimum of the linear function $\ip{-,q'(t)}$ over a star-shaped domain. It can be shown that:
  \begin{equation}\label{eq:kappa_lambda}
    \kappa=\int \Lambda(t)\d t.
  \end{equation}
  The fact that $\ge$ holds is clear, and $\le$ follows by picking smooth lifts $p(t)$ so that the pairing $\ip{p(t),q'(t)}$ is close to the theoretical minimum $\Lambda(t)$ for most $t\in [0,1]$, and then recalling that $\kappa$ is an infimum over all smooth lifts $p(t)$. In general, the minimizer of a linear function on a star-shaped domain does not depend continuously on the linear function or the star-shaped domain, but one can still get $\int \gamma^{*}\lambda$ arbitrarily close to $\int \Lambda(t)\d t$.

  Fix some $\delta>0$. By smoothly reparametrizing $q(t)$ in time, one can make $\Lambda(t)$ approximately constant, in the sense that $\kappa-\delta\le \Lambda(t)\le \kappa+\delta$ holds for all $t$. This should be thought of as a generalized ``constant speed'' parametrization. Henceforth replace $\mathfrak{B}$ by this reparametrized $\mathfrak{B}$.

  If $\delta$ is sufficiently small, then $H_{t}$ will be sufficiently negative for $\mathfrak{B},\Omega$. Indeed, if $\gamma(t)$ is a lift which remains outside of $\Omega$, then $\gamma(t)=(q(t),e^{f(t)}p(t))$ where $p(t)\in \bd\Omega$, hence:
  \begin{equation*}
    H_{t}(\gamma(t))\d t-\gamma^{*}\lambda\le e^{f(t)}(H_{t}(q,p)-\Lambda(t))\d t\le e^{f(t)}(H_{t}(q,p)-\kappa+\delta)\d t<0,
  \end{equation*}
  using that $H_{t}(q,p)-\kappa+\delta<0$ for all $(q,p)\in \Omega$ with $q\in \mathfrak{B}(P\times \R/\Z)$ for sufficiently small $\delta$. Thus the compactness argument outlined in \S\ref{sec:applying} can be performed, and one concludes a 1-periodic orbit of $X_{H_{t}}$ whose projection to $M$ is in the same free homotopy class of $\mathfrak{B}$. This completes the proof.
\end{proof}
As a corollary, one can use the constant map $\mathfrak{B}(\pi,t)=q_{0}$, with $P=\set{\pi}$, which has $\kappa=0$, to conclude Theorem \ref{theorem:A}.

\subsection{Proof of Theorem \ref{theorem:B}}
\label{sec:relative_gromov_width}

Let $\Omega$ be a starshaped domain in the cotangent bundle $T^{*}M$, and suppose that $M$ has an $\R/\Z$-action $\zeta_{t}:M\to M$ so that the orbits $t\mapsto \zeta(t,q)=\zeta_{t}(q)$ are non-contractible. There is a natural family of loops $\mathfrak{B}(\pi,t)=\zeta_{t}(\pi)$, where $P=M$. It is clear that $\pi\mapsto \mathfrak{B}(\pi,0)\in M$ has degree $1$, and this property is stable under cobordism (of families of loops). This gives an effective way to estimate the relative Gromov width of the pair $(\Omega,M)$.

\begin{proof}[of Theorem \ref{theorem:B}]
  See \cite{brocic_1} for similar arguments. Let $\mathfrak{B}(\pi,t)=\zeta_{t}(\pi)$, as above, First observe that:
  \begin{equation*}
    -\kappa(-\Omega,\mathfrak{B})=\sup\int\set{\int_{\R/\Z} \gamma^{*}\lambda:\gamma(t)\text{ lies in $\Omega$ and lifts }\mathfrak{B}(\pi,t)\text{ for some }\pi\in P}.
  \end{equation*}
  It is clear that $\Omega$ and $-\Omega$ have the same relative Gromov width, and hence it suffices to prove that the Gromov width of $\Omega$ is bounded by $-2\kappa(\Omega,\mathfrak{B})$, which is slightly more natural with our method.

  Let $\iota:B(a)\to \Omega$ be a symplectic embedding. By a limiting argument, we may suppose that $\iota$ extends smoothly to a map $B(a')\to \Omega$ for some $a'>a$. Let $J$ be a complex structure on $T^{*}M$ so that $J\d\iota=\d\iota J_{0}$ holds on the image of $\iota$ and is admissible for $\Omega$.

  Fix $\epsilon>0$, and pick $H$ so that $\max_{\Omega} H-\min_{M}H\le \epsilon$, $\min_{\bd\Omega}H\le -\epsilon^{-1}$, $\d H=0$ on $\iota(B(a))$, and $H_{t}$ is admissible for $\Omega$. It is not hard to construct such an $H$, one can take a smoothing of the function which equals $-\epsilon^{-1}$ on $\Omega$ which is extended to be admissible (i.e., $1$-homogeneously). For $\epsilon$ small enough, $H$ will be sufficiently negative for $\mathfrak{B}$.

  Applying Theorem \ref{theorem:main}, one concludes that, for a sequence $\ell_{n}\to \infty$ and a sequence of perturbations $\delta^{n}_{t}\to 0$ supported in $\Omega$, the moduli spaces:
  \begin{equation*}
    \mathscr{M}_{n}:=\mathscr{M}(H+\delta^{n}_{t},J,\mathfrak{B})\cap \set{\ell=\ell_{n}},
  \end{equation*}
  are compact manifolds, and the evaluation $\mathrm{ev}_{n}:\mathscr{M}_{n}\times \R/\Z\to M$ is cobordant to the $\R/\Z$-action map $M\times \R/\Z\to M$. In particular, the restricted evaluation map $\mathrm{ev}_{n}:\mathscr{M}_{n}\times \set{0}\to M$ has degree $1$. Therefore, for every $n$ we can find $(u_{n},\pi_{n},\ell_{n})$ so that $u_{n}(\ell_{n},0)=\iota(0)$, i.e., $u_{n}$ passes through the center of the ball.

  \begin{figure}[H]
    \centering
    \begin{tikzpicture}
      \draw (0,0) circle (2);
      \draw[dashed] (2,0) arc (0:180:2 and 0.5);
      \draw (-2,0) arc (180:360:2 and 0.5);
      \draw[blue] (0.6,0) circle (0.6 and 0.3) (0.8,2.3) circle (0.6 and 0.3);
      \draw[blue] (1.2,0) to[out=90,in=-90] (1.4,2.3) node[right,color=black] {$\gamma$} (0,0)node[draw=black,circle,inner sep=1pt,fill=black](A){} to[out=90,in=-90] (0.2,2.3);
      \node at (A) [left] {$\iota(0)$};
      \node[blue] at (0.7,1) {$u$};
      \node at (0,-1.2) {$\iota(B(a))$};
    \end{tikzpicture}
    \caption{The limiting Floer cylinder is asymptotic to the Hamiltonian orbit $\gamma$ at its left end. The right end traces out a loop in $M$ which passes through $\iota(0)$.}
    \label{fig:2}
  \end{figure}
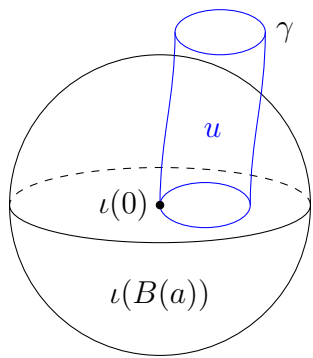

  Standard compactness arguments imply that the reparametrizations $u_{n}(s+\ell_{n},t)$ converge to a solution $u$ of \eqref{eq:compactness-up-to-breaking}, with $u(0,0)=\iota(0)$, after passing to a subsequence; see \S\ref{sec:length_diverging}. The energy bound from Lemma \ref{lemma:apriori_bound_kappa} implies that:
  \begin{equation*}
    \int \norm{\bd_{s}u}^{2}_{J}\d s \d t \le \max_{\Omega}H-\min_{M} H-\kappa(\mathfrak{B},\Omega).
  \end{equation*}
  The limit of $u(s,t)$ as $s\to -\infty$ is a 1-periodic orbit $\gamma$ of $X_{H}$. Since $t\mapsto u_{n}(s,t)$ lies in the free homotopy class of a $\zeta$-orbit, the Hamiltonian orbit $\gamma$ is non-contractible. Since $\d H=0$ on $\iota(B(a))$, it follows easily that $u(s,t)\not\in B(a)$ for $s$ sufficiently negative, and also $u$ is $J$-holomorphic on $u^{-1}(\iota(B(a)))$. The monotonicity theorem from \S\ref{sec:monotonicity_theorem} implies $a\le 2\int \norm{\bd_{s}u}^{2}_{J}\d s \d t,$ and hence:
  \begin{equation*}
    a\le 2(\max_{\Omega}H-\min_{M} H-\kappa(\mathfrak{B},\Omega))\le 2\epsilon-2\kappa(\mathfrak{B},\Omega).
  \end{equation*}
  Since $\epsilon$ was arbitrary, we conclude the desired result.
\end{proof}

\subsubsection{Sharpness of the estimate}
\label{sec:sharpness}

In this section we show that, for certain starshaped domains in $T^{*}M$, for $M=T^{n}$, the estimate in Theorem \ref{theorem:B} is sharp. Let
\begin{equation*}
  M:=M(a_{1},\dots,a_{n})=\R/a_{1}\Z\times \dots \times \R/a_{n}\Z,
\end{equation*}
where $0<a_{1}\le \dots\le a_{n}$. Let $q_{1},\dots,q_{n}$ be the corresponding coordinates (so $q_{i}$ is $\R/a_{i}\Z$-valued), and $p_{1},\dots,p_{n}$ the dual coordinates on $T^{*}M$. A natural star-shaped domain is given by $\Omega(r):=\set{p_{1}^{2}+\dots+p_{n}^{2}\le r^{2}}$.
\begin{prop}
  Let $\mathfrak{B}(\pi,t)=(q_{1}+a_{1}t,q_{2},\dots,q_{n})\in M$. Then the relative Gromov width of $(\Omega(r),M)$ equals $-2\kappa(\Omega(r),\mathfrak{B})=2a_{1}r$, and Theorem \ref{theorem:B} is sharp in this case.
\end{prop}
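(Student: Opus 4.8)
The plan is to prove the two assertions separately: first the computation $-2\kappa(\Omega(r),\mathfrak{B}) = 2a_1 r$, and then the matching lower bound on the relative Gromov width. For the first, recall that $-\kappa(\Omega(r),\mathfrak{B})$ is the supremum of $\int_{\R/\Z}\gamma^*\lambda$ over loops $\gamma$ lying in $\Omega(r)$ and lifting some $\mathfrak{B}(\pi,t) = (q_1+a_1t, q_2,\dots,q_n)$. Writing $\gamma(t) = (q(t), p(t))$ with $q'(t) = (a_1, 0,\dots,0)$ and $p(t)\in \Omega(r)$, we get $\gamma^*\lambda = \langle p(t), q'(t)\rangle\,\d t = a_1 p_1(t)\,\d t$. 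The constraint $p_1^2+\dots+p_n^2 \le r^2$ gives $p_1(t) \le r$, with equality achievable by the constant lift $p(t) \equiv (r,0,\dots,0)$ (this genuinely lies in $\Omega(r)$ and is a smooth lift of $\mathfrak{B}(\pi,\cdot)$ for each fixed $\pi$). Hence $\int_{\R/\Z}\gamma^*\lambda \le a_1 r$ with equality attained, so $-\kappa(\Omega(r),\mathfrak{B}) = a_1 r$ and $-2\kappa(\Omega(r),\mathfrak{B}) = 2a_1 r$. By Theorem \ref{theorem:B} (applied with the $\R/\Z$-action $\zeta_t(q) = (q_1+a_1t, q_2,\dots,q_n)$, whose orbits are non-contractible since $a_1 > 0$), the relative Gromov width of $(\Omega(r), M)$ is at most $2a_1 r$.

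It remains to exhibit a symplectic embedding of the ball $B(a)$ into $\Omega(r)$, mapping $B(a)\cap \R^n$ onto $M$, for every $a < 2a_1 r$; this is where the real work lies. The natural approach is to work in the universal cover $\R^n$ (with coordinates $q_i$) and exhibit an explicit embedding into the product region $\set{(q,p): p_1^2+\dots+p_n^2 < r^2}$, then descend to the quotient. I would try to reduce to a one-handle picture: the hard direction is $q_1$, where the available ``room'' is the full circle $\R/a_1\Z$ of circumference $a_1$ in position and the full interval $(-r, r)$ of width $2r$ in momentum, giving a cylinder $(\R/a_1\Z)\times(-r,r)$ of area $2a_1 r$; in the remaining directions $q_2,\dots,q_n$ the domain $\Omega(r)$ contains a neighborhood of the zero section that is symplectomorphic to a standard piece of $T^*T^{n-1}$ of large capacity (since $a_2,\dots,a_n \ge a_1$). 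So the embedding of $B(a)$ should be built by first symplectically embedding a ball of capacity close to $2a_1 r$ into the cylinder factor — using that an open ball $B(c)$ admits a symplectic embedding into $(\R/L\Z)\times \R$ (a cylinder of area $L$) whenever $c < L$, which maps the real part of the ball onto the zero-section circle — and then taking the product with an embedding of a small ball in the remaining factors into the disk-bundle neighborhood, followed by a symplectic ``tilting'' or shearing to turn the product of balls into a genuine round ball $B(a)$ of the correct capacity. Descending to $M$ is immediate since the embedding is chosen to be $a_1\Z\times\dots\times a_n\Z$-periodic in the relevant sense on the image.

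The main obstacle is the last point: producing an \emph{honest round ball} $B(a)$ with $B(a)\cap\R^n$ mapped onto $M$, rather than merely a polydisc or a product of balls, while staying inside $\Omega(r)$ and respecting the Lagrangian constraint. I would handle this the way relative-Gromov-width lower bounds are usually established (cf. \cite{barraud_cornea_1, biran_cornea_geom_topol, brocic_1}): first embed the ball into the cotangent bundle $T^*M$ of a large flat torus so that the real part lands on the zero section — this is essentially the statement that $\mathrm{width}(T^*(\R/L\Z)\times T^*\R^{n-1}, \text{zero section})$ is at least $L$ in the first factor — and then observe the image can be arranged to lie in $\Omega(r)$ because only the first coordinate's momentum is constrained tightly (by $r$) while the others have slack. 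The clean way to see the one-dimensional building block is: the map $(x,y)\mapsto$ (appropriately normalized angular and radial coordinates) realizes $B(c) \hookrightarrow T^*(\R/L\Z)$ for $c<L$ sending $B(c)\cap\R$ to the zero section, and this is well-documented. Taking the product and applying Gromov's trick (a volume-preserving symplectic linear map converting a thin polydisc to a ball of nearly the same capacity) completes the construction, and letting $a\uparrow 2a_1 r$ gives sharpness.
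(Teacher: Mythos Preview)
Your computation of $-\kappa(\Omega(r),\mathfrak{B})=a_{1}r$ is correct and essentially identical to the paper's, and invoking Theorem \ref{theorem:B} for the upper bound is exactly what the proposition is asserting. The problem is entirely in your lower-bound construction.

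Your plan is to build a $2$-dimensional relative embedding of capacity close to $2a_{1}r$ in the $(q_{1},p_{1})$ factor, take a product with small balls in the remaining $(q_{i},p_{i})$ factors, and then apply a ``Gromov trick'' to reshape this product into a round $2n$-ball of capacity close to $2a_{1}r$. This step does not exist. A product $B^{2}(c_{1})\times B^{2(n-1)}(c_{2})$ has Gromov width $\min(c_{1},c_{2})$, and linear symplectic maps preserve Gromov width; there is no ``volume-preserving symplectic linear map converting a thin polydisc to a ball of nearly the same capacity,'' since that would contradict non-squeezing. So if the auxiliary factors are small, the largest round ball you can fit is small, regardless of how much room you have in the first factor.

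The paper's argument avoids this by working with a genuinely $2n$-dimensional embedding from the outset. It observes that $\Omega(r)$ contains the product of \emph{Lagrangian} disks $D^{n}(a_{1}/2)\times D^{n}(r)$ (the first factor in the $q$-variables, using $a_{1}\le a_{2}\le\dots\le a_{n}$ so that a $q$-disk of radius $a_{1}/2$ embeds in $M$; the second factor the fiber disk $\abs{p}\le r$), and then cites \cite[Lemma 3.2]{brocic_1}, which provides a symplectic embedding
\[
B(2a_{1}r)\hookrightarrow D^{n}(a_{1}/2)\times D^{n}(r)
\]
sending $B(2a_{1}r)\cap \R^{n}$ onto $D^{n}(a_{1}/2)\times\set{0}$. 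This is a nonlinear embedding result specific to products of Lagrangian disks, not something that can be assembled from a $2$-dimensional cylinder embedding and a product-plus-shear. Your intuition that the cylinder $(\R/a_{1}\Z)\times(-r,r)$ of area $2a_{1}r$ governs the answer is correct, but promoting it to $2n$ dimensions requires exactly the cited lemma.
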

\begin{proof}
  Observe that $\Omega(r)$ contains $D^{n}(a_{1}/2)\times D^{n}(r)$ (the product of two Lagrangian disks). Then \cite[Lemma 3.2]{brocic_1} produces a symplectic embedding:
  \begin{equation*}
    B(2a_{1}r)\to D^{n}(a_{1}/2)\times D^{n}(r),
  \end{equation*}
  where $B(a)$ is the ball of symplectic capacity $a$ (i.e., has radius $(a/\pi)^{1/2}$). The symplectic embedding constructed maps $\R^{n}\cap B(a)$ onto the disk $D^{n}(a_{1}/2)\times \set{0}$. Thus the relative Gromov width is at least $2a_{1}r$.

  It remains to show that $-\kappa(\Omega(r),\mathfrak{B})=a_{1}r$. This follows from the calculations in the proof of Theorem \ref{theorem:exist_1}. Indeed, \eqref{eq:kappa_lambda} implies:
  \begin{equation*}
    \kappa(\Omega(r),\mathfrak{B})=\int_{0}^{1}\min_{\abs{p}^{2}\le r^{2}}\sum p_{i}q_{i}'(t)\,\d t=-a_{1}r,
  \end{equation*}
  using that $q'(t)=(a_{1},0,\dots,0)$. This completes the proof.
\end{proof}

\subsection{Outline of paper and discussion of results}
\label{sec:outline_and_discussion}

The rest of the paper is centered around proving Theorem \ref{theorem:main}.

In \S\ref{sec:floer_in_liouville}, we gather various facts about Floer's equation in Liouville manifolds (of which the cotangent bundle is a special case). A somewhat novel fact is the maximum principle we employ, \S\ref{sec:maximum_principle}. Our approach to the maximum principle is based on the arguments in \cite{merry_ulja}, but works for a larger class of complex structures. Unlike \cite{merry_ulja}, our approach does not rely on the Aleksandrov maximum principle, but assumes bounds on the first derivatives. In \S\ref{sec:apriori_first_derivative} we explain how to obtain these first derivative bounds using bubbling analysis.

The compactness statements are in \S\ref{sec:compactness}. In \S\ref{sec:transversality_gluing}, we prove various transversality, index theory, and gluing statements.

We should emphasize that, throughout, we work with a fairly large class of complex structures, namely those which are $\omega$-tame and translation invariant in the ends. The main results could be proved using only complex structures which are of SFT type in the ends (\`a la \cite{behwz}), but, in order to estimate the relative Gromov width in \S\ref{sec:relative_gromov_width}, one needs to allow arbitrary $\omega$-compatible complex structures near the zero section. This complicates the gluing analysis in \S\ref{sec:gluing} compared to, e.g., \cite[\S 4.6]{abbondandolo_schwarz_2}, as we cannot assume that $J_{p} TM_{p}=T^{*}M_{p}$ holds for $p\in M$.

\subsubsection{Further directions}
\label{sec:further_directions}

The upper bound on the relative Gromov width obtained in Theorem \ref{theorem:B} was based on the existence of a half-infinite Floer cylinder as in Figures \ref{fig:3} and \ref{fig:2}. Similar maps are considered in ``open-closed maps'' relating Hamiltonian Floer homology and the Lagrangian Floer homology of the zero-section.\footnote{The authors thank Egor Shelukhin for pointing this out.} It is plausible that the existence of such cylinders can be proved by considering how such open-closed maps act on homology groups. Since the Floer homology of Lagrangians involves pairs of nearby Lagrangians, some form of adiabatic degeneration is expected to be necessary, perhaps similar to the results in \cite{FloerW,cant_chen}. Such an approach goes beyond the scope of the paper, and is left for future study.

Another direction is to analyze the breakings in $\mathscr{M}$ as $\ell\to \infty$ more closely. One potential application of this analysis would be to emulate the construction of the Floer fundamental group of \cite{barraud_floer_fundamental_group,barraud_simone_fundamental_group}, and hopefully relate the fundamental group of the free loop space to Floer theoretic objects.

\subsection{Acknowledgements}
\label{sec:ack}

This research was carried out at Universit\'e de Montr\'eal. The authors wish to thank Octav Cornea and Egor Shelukhin for many fruitful discussions. The first author is supported by a ISM scholarship, and the second author is supported by funding from the CIRGET research group. Both authors are also supported by funding from the Foundation Courtois.

\section{Floer's equation in Liouville manifolds}
\label{sec:floer_in_liouville}

An \emph{exact symplectic manifold} is a smooth manifold $W$ equipped with a \emph{Liouville form} $\lambda$, i.e., $\d\lambda$ is a symplectic form. The associated \emph{Liouville vector field} $Z$ is defined by the equation $Z\intprod \d\lambda=\lambda$. If $\rho_{\sigma}$ is the time $\sigma$ flow by $Z$, then $\rho_{\sigma}^{*}\lambda=e^{\sigma}\lambda$.

A \emph{symplectization} is an exact symplectic manifold $W$ whose Liouville flow $Z$ is complete and defines a free and proper $\R$-action. In this case, there is an odd-dimensional quotient space $Y=W/\R$. The kernel of the Liouville form $\lambda$ is invariant under the flow and projects to a \emph{contact distribution} $\xi$ on $Y$. In this case we say that $W$ is the symplectization of $(Y,\xi)$, and typically denote it $W=SY$.

A \emph{Liouville manifold} is an exact symplectic manifold $W$ which contains a compact $Z$-invariant set $\mathrm{Sk}(W)$, called the \emph{skeleton}, so that:
\begin{enumerate}
\item the complement $\mathrm{Sk}(W)^{c}$ is the symplectization of a compact manifold $Y$, and
\item for any $z$, $\rho_{\sigma}(z)$ eventually enters any neighborhood of $\mathrm{Sk}(W)^{c}$ as $\sigma\to-\infty$,
\end{enumerate}
Essentially, one thinks of a Liouville manifold as a symplectization with its negative end ``filled.'' Liouville manifolds are studied in greater detail in \S\ref{sec:liouville_manifolds}.

\subsection{Symplectizations}
\label{sec:contact_prelims}
Let $W$ be a symplectization and let $Y=W/\R$, with $\xi$ being the projection of $\ker \lambda$. Note that $\xi$ is cooriented by requiring that $\lambda$ is positive on transverse vectors. The ``exact symplectomorphism type'' of $(W,\lambda)$ is completely determined by the (cooriented) contactomorphism type of $(Y,\xi)$:
\begin{prop}
  Let $(SY_{1},\lambda_{1}),(SY_{2},\lambda_{2})$ be symplectizations. Every contactomorphism $\varphi:Y_{1}\to Y_{2}$ preserving the coorientation is induced by a unique equivariant diffeomorphism $\Phi:SY_{1}\to SY_{2}$ satisfying $\Phi^{*}\lambda_{2}=\lambda_{1}$.
\end{prop}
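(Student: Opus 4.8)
The plan is to show that every symplectization is \emph{canonically} identified with a concrete subset of a cotangent bundle, and that coorientation-preserving contactomorphisms act on these models via cotangent lifts; both existence and uniqueness then follow formally.

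Given a symplectization $(W,\lambda)$ with quotient projection $\pi\colon W\to Y$, I would first build a map $\Psi\colon W\to T^{*}Y$ by sending $w$ to the covector $p_{w}\in T^{*}_{\pi(w)}Y$ defined by $p_{w}(v)=\lambda_{w}(\tilde v)$, where $\tilde v\in T_{w}W$ is any lift of $v$. This is well defined since $\ker\d\pi_{w}=\R Z_{w}$ and $\lambda(Z)=\d\lambda(Z,Z)=0$ (from $Z\intprod\d\lambda=\lambda$). Two elementary observations then do the work: (i) $p_{w}\neq 0$, because $\lambda_{w}=0$ would force $Z_{w}=0$ by nondegeneracy of $\d\lambda$, contradicting freeness of the $\R$-action; and (ii) $p_{\rho_{\sigma}(w)}=e^{\sigma}p_{w}$, because $\rho_{\sigma}^{*}\lambda=e^{\sigma}\lambda$. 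Since by definition the contact distribution $\xi$ on $Y$ is the projection of $\ker\lambda$, cooriented so that $\lambda$ is positive on transverse vectors, items (i)--(ii) show that $\Psi$ takes values in the submanifold $\mathscr{S}Y\subset T^{*}Y$ of nonzero covectors $p$ with $\ker p=\xi$ and $p$ positive on the coorientation, and that $\Psi$ intertwines $\rho_{\sigma}$ with fiberwise multiplication by $e^{\sigma}$. Next I would check that $\Psi$ is a diffeomorphism onto $\mathscr{S}Y$: surjectivity and injectivity follow from (i), (ii) and the fact that two positive covectors with the same kernel differ by a positive scalar, while $\Psi$ being a local diffeomorphism is a dimension count ($\d\Psi$ is nonzero along $Z_{w}$ and projects isomorphically onto $T_{\pi(w)}Y$ on a complement). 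Finally $\Psi^{*}\lambda_{\mathrm{can}}=\lambda$, immediately from the definition of $p_{w}$ together with the compatibility of $\Psi$ with the projections to $Y$.

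With the model in hand, a coorientation-preserving contactomorphism $\varphi\colon Y_{1}\to Y_{2}$ lifts to the cotangent map $(q,p)\mapsto\bigl(\varphi(q),(\d\varphi_{q}^{-1})^{*}p\bigr)$, which restricts to a diffeomorphism $\mathscr{S}Y_{1}\to\mathscr{S}Y_{2}$ (a contactomorphism carries $\xi_{1}$ to $\xi_{2}$ and the coorientation to the coorientation), is equivariant for the fiber scalings, covers $\varphi$, and preserves the tautological forms; conjugating it by $\Psi_{1}$ and $\Psi_{2}$ yields the required $\Phi$ with $\Phi^{*}\lambda_{2}=\lambda_{1}$. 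For uniqueness, two solutions $\Phi,\Phi'$ differ by the automorphism $g=\Phi'\circ\Phi^{-1}$ of $W_{2}$, which is $\R$-equivariant, covers $\id_{Y_{2}}$, and satisfies $g^{*}\lambda_{2}=\lambda_{2}$. Equivariance and freeness force $g$ to have the form $w\mapsto\rho_{\sigma(\pi_{2}(w))}(w)$ for some $\sigma\colon Y_{2}\to\R$; computing the pullback (most transparently in the model, where $g$ becomes $(q,p)\mapsto(q,e^{\sigma(q)}p)$) gives $g^{*}\lambda_{2}=e^{\sigma\circ\pi_{2}}\lambda_{2}$, and since $\lambda_{2}$ is nowhere zero this forces $\sigma\equiv 0$, i.e.\ $g=\id$.

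I expect the only real obstacle---minor as it is---to be verifying cleanly that $\Psi$ is a well-defined diffeomorphism: this rests on the non-vanishing of $\lambda_{w}$ and on keeping track of the equivariance, after which everything else is routine.
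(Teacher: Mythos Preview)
Your argument is correct. The paper itself does not prove this proposition: it simply states that the result is well-known and leaves it to the reader. So there is no ``paper's approach'' to compare against, and your proof more than fills the gap.

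Your route via the canonical embedding $\Psi\colon W\to T^{*}Y$ into the positive conormal half-line of $\xi$ is a standard and conceptually clean way to handle symplectizations: it reduces both existence and uniqueness to functoriality of the cotangent lift. A more bare-hands alternative, which some readers might have in mind, is to fix a section $\Sigma_{2}\subset SY_{2}$ transverse to $Z_{2}$ (equivalently a contact form $\alpha_{2}$), pull it back to a contact form $\alpha_{1}=\varphi^{*}\alpha_{2}$ on $Y_{1}$, and then use the identifications $SY_{i}\cong\R\times Y_{i}$ with $\lambda_{i}=e^{r}\alpha_{i}$ to write $\Phi(r,y)=(r,\varphi(y))$ directly. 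That approach is shorter but depends on an auxiliary choice of contact form; yours is choice-free and makes the uniqueness transparent. Either is adequate here.

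Two small points you may want to tighten if you write this up: smoothness of $\Psi$ (clear once you choose a local smooth splitting of $\d\pi$), and smoothness of the function $\sigma$ in the uniqueness step (this is where freeness and properness of the $\R$-action, i.e.\ that $W\to Y$ is a principal bundle, are used). Neither is a genuine gap.
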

\begin{proof}
  The result is well-known and left to the reader.
\end{proof}

\subsubsection{Contact Hamiltonians}
\label{sec:contact_hams}
A smooth function $H:SY\to \R$ is called a \emph{contact Hamiltonian} provided that one of the following equivalent conditions hold:
\begin{enumerate}
\item $Z\intprod \d H=H$,
\item $H\circ \rho_{\sigma}=e^{\sigma}H$,
\item $X_{H}\intprod \lambda=H$,
\end{enumerate}
These conditions imply that the flow $\Phi_{t}$ of $X_{H}$ commutes with the Liouville flow. In particular, $\Phi_{t}$ descends to a contact isotopy $\varphi_{t}:Y\to Y$. Every contact isotopy arises in this fashion.

If $Q$ is a non-vanishing contact Hamiltonian, then $h=H/Q$ is invariant under the Liouville flow, and hence $h$ induces a function $Y\to \R$. Other authors often call functions $h:Y\to \R$ ``contact Hamiltonians,'' and this is related to our notion via this correspondence $hQ=H$. The additional data of a non-vanishing contact Hamiltonian is equivalent to a choice of contact form via $\alpha=\lambda/Q$.

\subsubsection{Legendrians and Lagrangians}
\label{sec:legendrians_and_lagrangians}

A Lagrangian $L$ in $SY$ is called \emph{cylindrical} if $L$ is $Z$-invariant. In particular, $Z$ is tangent to $L$. It follows that $\lambda|_{L}=0$, and hence cylindrical Lagrangians are exact (in a strong sense).

Cylindrical Lagrangians are lifts of Legendrians $\Lambda\subset Y$, and we will denote this fact by writing $L=S\Lambda$.

\subsubsection{Translation invariant complex structures and metrics}
\label{sec:translation_invariant_complex_structures}

A complex structure $J$ on $SY$ is called \emph{translation invariant} provided $J\d\rho_{\sigma}=\d\rho_{\sigma}J$. We say that $J$ is $\omega$-\emph{tame} provided $\omega(-,J-)$ is a positive $2$-tensor, and say $J$ is $\omega$-\emph{compatible} if $\omega(-,J-)$ is also symmetric. In either case, we denote:
\begin{equation*}
  \norm{v}_{J}^{2}=\omega(v,Jv).
\end{equation*}
A Riemannian metric $\abs{-}$ is called \emph{translation invariant} provided that $\abs{\d\rho_{\sigma}v}=\abs{v}$. We note that $\norm{-}_{J}^{2}$ is not translation invariant. However, if $J$ is translation invariant and $Q$ is a positive contact Hamiltonian then $\abs{v}^{2}=Q(p)^{-1}\norm{v}_{J}^{2}$ is a translation invariant metric (where $v\in T_{p}W$).

\subsection{Liouville manifolds}
\label{sec:liouville_manifolds}

Let $(W,\lambda)$ be a Liouville manifold with skeleton $\mathrm{Sk}(W)$. Let $SY=\mathrm{Sk}(W)^{c}$. The contact manifold $Y$ is called the \emph{ideal boundary} of $W$.

\subsubsection{Star-shaped domains}
\label{sec:star-shaped domains}
Let $\Sigma\subset SY$ be the level set $\set{r=1}$ for a positive contact Hamiltonian $r$. Then:
\begin{equation*}
  \Omega:=\set{r\le 1}\cup \mathrm{Sk}(W)
\end{equation*}
is a compact codimension $0$ submanifold in $W$ (with boundary $\bd\Omega=\Sigma$). Domains of this form are referred to as \emph{starshaped domains}. By construction, there is a bijection between starshaped domains and positive contact Hamiltonians on $SY$.

\subsubsection{Admissible Hamiltonians and complex structures}
\label{sec:admissible_hamiltonians}
An \emph{admissible Hamiltonian for a starshaped domain $\Omega$} is a 1-periodic family of functions $H_{t}$ which are contact Hamiltonians outside $\Omega$, and so that every solution to:
\begin{equation}\label{eq:ham_ODE}
  \gamma'(t)=X_{H_{t}}(\gamma(t))
\end{equation}
passes through the interior of $\Omega$. If there is no risk of ambiguity, we will abbreviate the associated Hamiltonian vector field by $X_{t}=X_{H_{t}}$. It can be shown that admissibility is a generic condition, essentially because the $1$-periodic orbits at infinity should be isolated, but they always occur in a 1-dimensional family (by the equivariance with the Liouville flow).

We say that a 1-periodic family of complex structures $J_{t}$ is \emph{admissible for $\Omega$} provided $J_{t}$ is $\omega$-tame and $J_{t}$ is translation invariant on $\Omega^{c}$.

The key consequence of admissibility is the following estimate:
\begin{prop}\label{prop:Nsigma}
  Let $H_{t},J_{t}$ be an admissible Hamiltonian and complex structure for $\Omega$. For any $N>0$ there is $\sigma\ge 0$ so that, for $C^{1}$ loops $\gamma$,
  \begin{equation*}
    \gamma(t)\not\in \rho_{\sigma}(\Omega)\text{ for all $t$}\implies \mathfrak{n}(\gamma):=\int_{0}^{1}\norm{\gamma'(t)-X_{t}}^{2}_{J_{t}}\d t>N.
  \end{equation*}
\end{prop}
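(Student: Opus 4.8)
The plan is to argue by contradiction: if the claimed $\sigma$ does not exist, then for some fixed $N>0$ there is a sequence $\sigma_j\to\infty$ and $C^1$ loops $\gamma_j$ with $\gamma_j(t)\notin\rho_{\sigma_j}(\Omega)$ for all $t$, yet $\mathfrak{n}(\gamma_j)\le N$. Pushing everything down by the Liouville flow, set $\tilde\gamma_j=\rho_{-\sigma_j}\circ\gamma_j$, so that $\tilde\gamma_j(t)\notin\Omega$ for all $t$. The key point is to track how the quantity $\mathfrak{n}$ transforms under $\rho_{-\sigma_j}$. Outside $\Omega$ the data is admissible: $H_t$ is a contact Hamiltonian, so $X_{H_t}\circ\rho_\sigma=e^{-\sigma}\,\d\rho_\sigma(X_{H_t})$ (differentiating $H_t\circ\rho_\sigma=e^\sigma H_t$ and using $\rho_\sigma^*\d\lambda=e^\sigma\d\lambda$), and $J_t$ is translation invariant. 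Hence $\tilde\gamma_j'(t)-X_{H_t}(\tilde\gamma_j)=e^{-\sigma_j}\,\d\rho_{-\sigma_j}\big(\gamma_j'(t)-X_{H_t}(\gamma_j)\big)$. Now $\|-\|_{J_t}^2=\omega(-,J_t-)=\d\lambda(-,J_t-)$, and since $\rho_{-\sigma_j}^*\d\lambda=e^{-\sigma_j}\d\lambda$ while $J_t$ commutes with $\d\rho_{-\sigma_j}$, we get $\|\d\rho_{-\sigma_j}v\|_{J_t}^2=e^{-\sigma_j}\|v\|_{J_t}^2$ for $v$ based outside $\Omega$. Combining the two factors gives $\mathfrak{n}(\tilde\gamma_j)=e^{-3\sigma_j}\,\mathfrak{n}(\gamma_j)\le e^{-3\sigma_j}N\to 0$.

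So we have produced loops $\tilde\gamma_j$ remaining in the closed set $\Omega^c$ with $\mathfrak{n}(\tilde\gamma_j)\to 0$. The conclusion I want is that this forces a periodic orbit of $X_{H_t}$ lying entirely outside $\Omega$, contradicting admissibility (which requires every solution of \eqref{eq:ham_ODE} to pass through the interior of $\Omega$). To get there I would first note that $\mathfrak{n}(\tilde\gamma_j)\to0$ together with a translation-invariant background metric $|-|$ (as in \S\ref{sec:translation_invariant_complex_structures}, using a positive contact Hamiltonian $Q$ to compare $\|-\|_{J_t}$ and $|-|$ on a neighborhood of $\Omega^c$) gives $\int_0^1|\tilde\gamma_j'(t)-X_{H_t}(\tilde\gamma_j)|^2\,\d t\to0$; the comparison constant is uniform once we stay outside $\Omega$ because $|-|$ is translation invariant and $\Omega^c/\R$ is compact. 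This $L^2$-smallness plus the uniform bound $|X_{H_t}|\le$ const on, say, $\rho_{[0,1]}(\bd\Omega)^c$ (again by translation invariance and compactness of $Y$ — but note the orbits could escape to $+\infty$ in the Liouville direction, which is the delicate point, see below) gives uniform $C^0$, then $W^{1,2}$, hence $C^0$-equicontinuity of the loops $\tilde\gamma_j$.

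The main obstacle is noncompactness in the Liouville/radial direction: the loops $\tilde\gamma_j$ lie outside $\Omega$ but could drift off to infinity, so Arzelà–Ascoli is not immediate. To handle this I would exploit the Liouville equivariance once more: the function $\mathfrak{n}$ scales by $e^{3\sigma}$ under $\rho_\sigma$ outside $\Omega$, so if $\tilde\gamma_j$ sits near radial level $e^{r_j}$ with $r_j\to+\infty$ we may instead consider $\rho_{-r_j}\circ\tilde\gamma_j$, which sits at bounded radial level (in $\rho_{[0,1]}(\bd\Omega)^c$, say) and still has $\mathfrak{n}\to0$; after this renormalization the loops live in a compact region and Arzelà–Ascoli applies, producing a limit loop $\gamma_\infty$ in $\Omega^c$ with $\mathfrak{n}(\gamma_\infty)=0$, i.e.\ $\gamma_\infty'(t)=X_{H_t}(\gamma_\infty)$, a $1$-periodic orbit lying outside $\Omega$ — contradiction. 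One should check the renormalization is consistent (e.g.\ bound the radial oscillation of each $\tilde\gamma_j$ using the $L^2$-smallness of $\tilde\gamma_j'-X_{H_t}$ and the fact that $X_{H_t}$ generates a flow equivariant under $\rho_\sigma$ outside $\Omega$, so the radial coordinate of an exact solution is $1$-periodic and nearly-exact solutions have small radial oscillation). Modulo this bookkeeping, the argument reduces everything to the defining property of admissibility.
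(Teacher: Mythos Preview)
Your overall strategy matches the paper's: reduce to a sequence of loops outside $\Omega$ with $\mathfrak{n}\to 0$, then extract a limiting $1$-periodic orbit of $X_{H_t}$ in $\Omega^c$, contradicting admissibility. The paper phrases the first reduction as ``it suffices to prove the statement for $\sigma=0$ and $N$ small,'' which is equivalent to your pushing-down by $\rho_{-\sigma_j}$; the radial renormalization you describe is exactly what the paper means by ``potentially after flowing by the Liouville flow.''

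Two corrections are needed. First, your scaling is off: since the flow of a contact Hamiltonian commutes with the Liouville flow, one has $X_{H_t}\circ\rho_\sigma=\d\rho_\sigma(X_{H_t})$ with no $e^{-\sigma}$ factor, and hence $\mathfrak{n}(\rho_\sigma\gamma)=e^{\sigma}\mathfrak{n}(\gamma)$, not $e^{3\sigma}$. This does not break the argument, as $e^{-\sigma_j}N\to 0$ regardless.

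Second, and more substantively, the step ``Arzel\`a--Ascoli produces a limit $\gamma_\infty$ with $\mathfrak{n}(\gamma_\infty)=0$'' is a genuine gap. Arzel\`a--Ascoli gives only $C^0$ convergence, and a $C^0$ limit need not be $C^1$, let alone satisfy $\gamma_\infty'=X_{H_t}(\gamma_\infty)$. The paper closes this by a direct ODE estimate: letting $\mu$ be the integral curve of $X_{H_t}$ with $\mu(t_0)=\gamma_\infty(t_0)$, one compares $\gamma_n$ to $\mu$ using Cauchy--Schwarz on $\gamma_n'-X_t(\gamma_n)$ and a Lipschitz bound on $X_t$, obtaining a Gr\"onwall-type inequality that forces $\gamma_\infty=\mu$ near $t_0$. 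You should either reproduce this, or argue via weak $W^{1,2}$ compactness and lower semicontinuity of $\mathfrak{n}$ (which requires $X_t(\gamma_n)\to X_t(\gamma_\infty)$ strongly in $L^2$, a consequence of the $C^0$ convergence).
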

\begin{proof}
  See \cite[Lemma 3.6]{merry_ulja} for a similar result. If $\gamma(t)\not\in \Omega$ for all $t$, then compute $\mathfrak{n}(\rho_{s}(\gamma))=e^{s}\mathfrak{n}(\gamma)$ for $s\ge 0$. In particular, if the result holds for $\sigma,N$, then it holds for $\sigma+s,e^{s}N$. In particular, it suffices to prove the result for $N$ sufficiently small.

  Suppose, in search of a contradiction, that the statement does not hold for any $N$ with $\sigma=0$. Then we can find a sequence $\gamma_{n}$ remaining outside of $\Omega$ so that $\mathfrak{n}(\gamma_{n})\to 0$. The goal is to prove that $\gamma_{n}$ converges to a periodic orbit of $X_{t}$, contradicting the admissibility condition.

  We consider $\gamma(t)$ as living in $SY$, noting that $X_{t}$ extends to all of $SY$ by equivariance with Liouville flow.

  Fix a translation invariant metric $\abs{v}^{2}$, which is comparable to $Q^{-1}\norm{v}_{J_{t}}^{2}$ since $J_{t}$ is translation invariant. Since $Q$ is bounded from below on $\Omega^{c}$, conclude:
  \begin{equation*}
    \lim_{n\to\infty}\int_{0}^{1} \abs{\gamma_{n}'(t)-X_{t}(\gamma_{n})}^{2}\d t=0
  \end{equation*}
  Since $\abs{X_{t}}$ is uniformly bounded, we conclude that $\int \abs{\gamma_{n}'(t)}^{2}\d t$ is bounded. It follows that $\gamma_{n}$ has bounded diameter and is equicontinuous, and hence converges in $C^{0}$ to a $1$-periodic limit $\gamma_{\infty}$ (potentially after flowing by the Liouville flow and passing to a subsequence).

  Let $\mu(t)$ be the unique solution of $\mu'(t)=X_{t}(\mu(t))$ with $\mu(t_{0})=\gamma_{\infty}(t_{0})$ (we do not claim that $\mu$ is periodic). We claim that $\mu(t)=\gamma_{\infty}(t)$ on a neighborhood of $t_{0}$. Without loss, work in coordinate charts, set $t_{0}=0$, and estimate:
  \begin{equation*}
    \abs{\gamma_{n}(t)-\mu(t)}\le \epsilon_{n}+t^{1/2}[\int_{0}^{t}\abs{\gamma_{n}'(t)-X_{t}(\gamma_{n})}^{2}\d t]^{1/2}+C_{X}t\max_{0\le \tau\le t}\abs{\gamma_{n}(\tau)-\mu(\tau)},
  \end{equation*}
  where $\epsilon_{n}=\abs{\gamma_{n}(0)-\gamma_{\infty}(0)}$. Here we have used the fact that:
  \begin{equation*}
    \int_{0}^{t}\abs{X_{t}(\gamma_{n}(\tau))-X_{t}(\mu(\tau))}\d\tau\le C_{X}t\max_{0\le \tau\le t}\abs{\gamma_{n}(\tau)-\mu(\tau)},
  \end{equation*}
  so $C_{X}$ depends on the $C^{1}$ size of $X$. For $t$ small, we have $C_{X}t<1/2$. One obtains:
  \begin{equation*}
    \max_{0\le \tau\le t}\abs{\gamma_{n}(\tau)-\mu(\tau)}\le 2\epsilon_{n}+2t^{1/2}[\int_{0}^{t}\abs{\gamma_{n}'(t)-X_{t}(\gamma_{n})}^{2}\d t]^{1/2},
  \end{equation*}
  and taking the limit $n\to\infty$ one concludes, for $t$ small, that $\gamma_{\infty}(t)=\mu(t)$. It follows that $\gamma_{\infty}(t)$ is an integral curve for $X_{t}$, and hence $X_{t}$ has a periodic orbit outside of $\Omega$. This contradicts admissibility, and completes the proof.
\end{proof}

\subsubsection{Hamiltonian action functional}
\label{sec:hamiltonian_action}
Given an admissible Hamiltonian $H_{t}$, the associated action functional is defined for $\gamma\in C^{1}(\R/\Z,W)$ and is given by:
\begin{equation*}
  \mathscr{A}(\gamma)=\int_{0}^{1}H_{t}(\gamma(t))\d t-\gamma^{*}\lambda.
\end{equation*}
The critical points of $\mathscr{A}$ are precisely the solutions to \eqref{eq:ham_ODE}.

\subsubsection{Floer cylinders}
\label{sec:floer_cylinders}

Given data $H_{t},J_{t}$ admissible for $\Omega$ the \emph{Floer equation} is:
\begin{equation}
  \label{eq:floer_cylinder}
  \left\{
    \begin{aligned}
      &u:[a,b]\times \R/\Z\to W\\
      &\bd_{s}u+J_{t}(u)(\bd_{t}u-X_{t}(u))=0,
    \end{aligned}
  \right.
\end{equation}
allowing $a=-\infty$ or $b=+\infty$. Abbreviate $u(s)=u(s,-)$, so that $u(s)$ is considered as a loop. The Floer equation is interpreted as the negative gradient flow for the symplectic action:
\begin{prop}\label{prop:apriori_energy}
  Let $u$ solve \eqref{eq:floer_cylinder}, and let $\mathscr{A}$ be the action functional for $H_{t}$. Then:
  \begin{equation*}
    E(u)=\int \norm{\bd_{s}u}_{J_{t}}^{2}\d s\d t =    \int \norm{\bd_{t}u-X_{t}(u)}_{J_{t}}^{2}\d s\d t = \mathscr{A}(u(a))-\mathscr{A}(u(b)).
  \end{equation*}
\end{prop}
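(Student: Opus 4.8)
The plan is to reduce the statement to computing the derivative of $s\mapsto\mathscr{A}(u(s))$ along a Floer cylinder. The equality of the two integral expressions for $E(u)$ is immediate and pointwise: the Floer equation and $J_t^2=-\id$ give $\bd_t u-X_t(u)=J_t(u)\,\bd_s u$, while for any $J$ with $J^2=-\id$ one has $\norm{Jv}_{J}^2=\omega(Jv,J^2v)=\omega(v,Jv)=\norm{v}_{J}^2$ by antisymmetry of $\omega$ (so compatibility is not needed, only tameness for positivity). Hence $\norm{\bd_s u}_{J_t}^2=\norm{\bd_t u-X_t(u)}_{J_t}^2$ everywhere, and integrating over $[a,b]\times\R/\Z$ equates the two formulas for $E(u)$. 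The remaining and substantive claim is $E(u)=\mathscr{A}(u(a))-\mathscr{A}(u(b))$, which I would prove by establishing $\frac{\d}{\d s}\mathscr{A}(u(s))=-\int_0^1\norm{\bd_s u}_{J_t}^2\,\d t$ and integrating in $s$.

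For that identity, write $\mathscr{A}(u(s))=\int_0^1 H_t(u(s,t))\,\d t-\int_0^1\lambda_{u(s,t)}(\bd_t u(s,t))\,\d t$ and differentiate under the integral sign (permissible since $u$ is smooth and $\R/\Z$ is compact). The Hamiltonian term contributes $\int_0^1\d H_t(\bd_s u)\,\d t=-\int_0^1\omega(X_t(u),\bd_s u)\,\d t$, using the sign convention $\iota_{X_t}\omega=-\d H_t$ — the one consistent with $X_H\intprod\lambda=H$ and $Z\intprod\omega=\lambda$. For the $\lambda$-term, the key input is $\d(u^*\lambda)=u^*\d\lambda$, which on the $(s,t)$-strip reads $\bd_s\bigl(\lambda(\bd_t u)\bigr)-\bd_t\bigl(\lambda(\bd_s u)\bigr)=\omega(\bd_s u,\bd_t u)$; integrating over $t\in\R/\Z$ the total $t$-derivative drops out, leaving $\frac{\d}{\d s}\int_0^1\lambda(\bd_t u)\,\d t=\int_0^1\omega(\bd_s u,\bd_t u)\,\d t$. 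Combining these,
\[
  \frac{\d}{\d s}\mathscr{A}(u(s))=\int_0^1\omega\bigl(\bd_s u,\;X_t(u)-\bd_t u\bigr)\,\d t,
\]
and substituting $X_t(u)-\bd_t u=-J_t(u)\,\bd_s u$ from the Floer equation turns the integrand into $-\omega(\bd_s u,J_t(u)\bd_s u)=-\norm{\bd_s u}_{J_t}^2$, as required.

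Integrating $\frac{\d}{\d s}\mathscr{A}(u(s))=-\int_0^1\norm{\bd_s u}_{J_t}^2\,\d t$ over $s\in[a,b]$ then yields $\mathscr{A}(u(b))-\mathscr{A}(u(a))=-E(u)$. In the case $a=-\infty$ or $b=+\infty$ I would interpret $\mathscr{A}(u(a)),\mathscr{A}(u(b))$ as the one-sided limits $\lim_{s\to a^+}\mathscr{A}(u(s))$, $\lim_{s\to b^-}\mathscr{A}(u(s))$; these exist whenever $E(u)<\infty$ because $s\mapsto\mathscr{A}(u(s))$ is monotone non-increasing and, on each finite subinterval, has variation bounded by $E(u)$, hence is Cauchy at the ends. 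I do not expect any genuine obstacle: the only points demanding care are keeping the sign conventions consistent (so that the Floer equation is literally the negative $L^2$-gradient flow of $\mathscr{A}$) and the entirely routine justification of differentiating under the integral; the identity $\d(u^*\lambda)=u^*\d\lambda$ is what does the real work.
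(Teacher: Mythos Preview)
Your proof is correct and is precisely the standard computation the paper has in mind; the paper itself does not spell out the argument, writing only ``The computation is standard, and is left to the reader.'' Your sign conventions match the paper's (it uses $\d\lambda(X_t,-)=-\d H_t$), and your handling of the $a=-\infty$ or $b=+\infty$ case via monotonicity is the usual one.
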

\begin{proof}
  The computation is standard, and is left to the reader.
\end{proof}

\subsubsection{Maximum principle}
\label{sec:maximum_principle}

The next result is a crucial ingredient in proving various moduli spaces are compact.

\begin{theorem}\label{theorem:maximum_principle}
  Let $u_{n}:[a_{n},b_{n}]\times \R/\Z\to W$ be a sequence of Floer cylinders so that:
  \begin{enumerate}
  \item $H_{t},J_{t}$ are admissble for $\Omega$,
  \item $b_{n}-a_{n}$ remains bounded from below by a positive number,
  \item $E(u_{n})$ remains bounded,
  \item $\abs{\bd_{s}u_{n}}$, $\abs{\bd_{t}u_{n}}$ remain bounded (for a metric which is translation invariant outside of $\Omega$)
  \end{enumerate}
  Then there is $\sigma\ge 0$ so that $u_{n}([a_{n},b_{n}]\times \R/\Z)\subset \rho_{\sigma}(\Omega)$ for all $n$.
\end{theorem}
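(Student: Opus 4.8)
I would argue by contradiction. If the cylinders $u_{n}$ reached arbitrarily far into the symplectization end, then the uniform first–derivative bound (iv) lets me propagate the large radial coordinate to an entire band of loops $u_{n}(s,\cdot)$ over an $s$–interval whose length does not degenerate, and then Proposition \ref{prop:Nsigma}, combined with the identity $E(u)=\int\norm{\bd_{t}u-X_{t}(u)}^{2}_{J_{t}}\,\d s\,\d t$ from Proposition \ref{prop:apriori_energy}, forces $E(u_{n})\to\infty$, contradicting (iii).

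\textbf{Setup.} Work on the positive end $SY=\mathrm{Sk}(W)^{c}$ with a radial coordinate $r$ adapted to the Liouville flow, so that $\rho_{\sigma}$ acts by $r\mapsto r+\sigma$, normalized so that $\bd\Omega=\set{r=0}$ and hence $\rho_{\sigma}(\Omega)\cap SY=\set{r\le\sigma}$; extend $r$ by $r\le 0$ on $\Omega$. Since every point of $\Omega$ lies in $\rho_{\sigma}(\Omega)$ for all $\sigma\ge 0$, the conclusion is equivalent to the uniform bound $\sup_{n}\max(r\circ u_{n})<\infty$. Fix a translation-invariant metric as in (iv); because the Liouville flow acts by isometries and $Y$ is compact, $r$ is uniformly Lipschitz on $\set{r>0}$ with respect to it, so by (iv) there is a constant $C$ with $\abs{r(u_{n}(z))-r(u_{n}(z'))}\le C\abs{z-z'}$ whenever the straight segment from $z$ to $z'$ in the domain maps into $\set{r>0}$. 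By Proposition \ref{prop:apriori_energy} one has $E(u_{n})=\int_{a_{n}}^{b_{n}}\mathfrak{n}(u_{n}(s))\,\d s$; let $E_{0}$ be the bound from (iii) and $c>0$ the lower bound for $b_{n}-a_{n}$ from (ii).

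\textbf{Propagation of the maximum.} Let $R_{n}=\max(r\circ u_{n})$, attained at $(s_{n},t_{n})$, and assume $R_{n}>C$. A standard continuity/bootstrap argument with the Lipschitz bound shows that, travelling around the circle $\set{s=s_{n}}$ (of circumference $1$), the value $r(u_{n}(s_{n},\cdot))$ cannot fall to $R_{n}/2$: on the sub-arc where it would stay above $R_{n}/2>0$ the Lipschitz estimate forces $R_{n}/2\le C/2$. Hence $r(u_{n}(s_{n},\cdot))>R_{n}/2$ on the entire loop. Propagating in the $s$–direction by the same argument, $r(u_{n}(s,t))>R_{n}/4$ for every $t$ and every $s$ with $\abs{s-s_{n}}\le R_{n}/(4C)$. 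Thus, setting $I_{n}=[s_{n}-R_{n}/(4C),s_{n}+R_{n}/(4C)]\cap[a_{n},b_{n}]$, every loop $u_{n}(s,\cdot)$ with $s\in I_{n}$ lies entirely in $\set{r>R_{n}/4}$, hence outside $\rho_{\sigma}(\Omega)$ for every $\sigma<R_{n}/4$; and since $s_{n}\in[a_{n},b_{n}]$ and $b_{n}-a_{n}\ge c$, as soon as $R_{n}/(4C)\ge c$ we have $\abs{I_{n}}\ge c$.

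\textbf{Conclusion and the main obstacle.} Put $N=E_{0}/c+1$ and let $\sigma_{N}\ge 0$ be the constant furnished by Proposition \ref{prop:Nsigma}. If $R_{n}>\max\set{C,\,4Cc,\,4\sigma_{N}}$, then $\abs{I_{n}}\ge c$ and each loop $u_{n}(s,\cdot)$ with $s\in I_{n}$ lies outside $\rho_{\sigma_{N}}(\Omega)$, so $\mathfrak{n}(u_{n}(s))>N$; integrating over $I_{n}$ gives $E(u_{n})>Nc=E_{0}+c$, contradicting $E(u_{n})\le E_{0}$. Therefore $\max(r\circ u_{n})\le\max\set{C,4Cc,4\sigma_{N}}$ for all $n$, and the theorem holds with $\sigma=\max\set{C,4Cc,4\sigma_{N}}$. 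The crux is the propagation step: Proposition \ref{prop:Nsigma} only constrains loops lying \emph{entirely} outside a large $\rho_{\sigma}(\Omega)$, so knowing merely that $u_{n}$ passes through one deep point is useless on its own — one needs a whole band of such loops over an $s$–interval that is not pinched off as $R_{n}\to\infty$. Hypotheses (ii) and (iv) are precisely what supply this: translation invariance makes the width $R_{n}/(4C)$ of the region where $r\circ u_{n}$ stays comparable to $R_{n}$ grow proportionally to $R_{n}$ rather than collapse, while (ii) prevents the usable subinterval of $[a_{n},b_{n}]$ from being squeezed to nothing near the ends $s=a_{n},b_{n}$. (One should also note that the propagation never leaves $\set{r>0}$ — so that the translation-invariant metric, hence the Lipschitz estimate, is the relevant one there — but this is automatic since we only ever move within $\set{r>R_{n}/4}\subset\set{r>0}$.)
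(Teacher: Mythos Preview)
Your proof is correct and uses the same ingredients as the paper's---Proposition~\ref{prop:Nsigma}, the energy identity from Proposition~\ref{prop:apriori_energy}, and the first-derivative bound---organized as a contrapositive rather than a direct argument. The paper proceeds directly: with $N=2E/\ell$ it obtains $\sigma_{0}$ from Proposition~\ref{prop:Nsigma}, argues by pigeonhole that every $s$ has some $s'$ with $\abs{s-s'}\le\ell/2$ for which the loop $u_{n}(s',\cdot)$ touches $\rho_{\sigma_{0}}(\Omega)$, and then uses the derivative bound to conclude $\mathrm{dist}(u_{n}(s,t),\rho_{\sigma_{0}}(\Omega))\le C(1+\ell/2)$.
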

\begin{proof}
  See \cite[Theorem 3.10]{merry_ulja} for a similar result. Without loss of generality, suppose that $a_{n}=0$ and $b_{n}=\ell_{n}$. The case when $a_{n}=-\infty$ or $b_{n}=\infty$ is similar. Suppose that $\ell_{n}\ge \ell$, $E(u_{n})\le E$, and $\abs{\bd_{s}u_{n}},\abs{\bd_{t}u_{n}}$ remain bounded by $C$.

  Proposition \ref{prop:Nsigma} with $N=2E/\ell$ guarantees some $\sigma_{0}$ so that
  \begin{equation*}
    \gamma(t)\not\in \rho_{\sigma_{0}}(\Omega)\text{ for all $t$}\implies \int_{0}^{1}\norm{\gamma'-X_{t}(\gamma)}_{J_{t}}^{2} \d t>2E/\ell.
  \end{equation*}
  Since
  \begin{equation*}
    E(u_{n})=\int_{0}^{\ell_{n}}\int_{0}^{1}\norm{\bd_{t}u_{n}-X_{t}(u_{n})}_{J_{t}}^{2} \d t\le E,
  \end{equation*}
  we conclude that, for every $s$, there is $s'$ so $\abs{s-s'}\le \ell/2$ so that $u_{n}(s',t')\in \rho_{\sigma_{0}}(\Omega)$ for some $t'$. But then, since the first derivative is bounded by $C$, we conclude that, for any $t$,
  \begin{equation*}
    \mathrm{dist}(u_{n}(s,t),\rho_{\sigma_{0}}(\Omega))\le C\abs{t-t'}+C\abs{s-s'}\le C(1+\ell/2),
  \end{equation*}
  Clearly we can pick $\sigma$ large enough so that the $C(1+\ell/2)$ ball (as measured by the translation invariant metric) around $\rho_{\sigma_{0}}(\Omega)$ is contained in $\rho_{\sigma}(\Omega)$. It follows that $u(s,t)\in \rho_{\sigma}(\Omega)$ for all $s,t$. Clearly $\sigma$ depends only on $\ell,C,E$. This completes the proof.
\end{proof}

The requirement that the first derivatives of $u_{n}$ are bounded is often achievable using bubbling analysis, see \S\ref{sec:apriori_first_derivative}.

\subsubsection{The monotonicity theorem for Darboux balls}
\label{sec:monotonicity_theorem}
Let $B(a)\subset \C^{n}$ be the open ball of capacity $a$, i.e., if $\Pi$ is a complex line, then $\Pi\cap B(a)$ has symplectic area $a$. The monotonicity theorem asserts that every holomorphic curve which ``properly'' passes through the center of the ball has symplectic area at least $a$.
\begin{prop}
  Let $(\Sigma,\bd\Sigma,j)$ be a compact Riemann surface and $u:\Sigma\to \C^{n}$ be a holomorphic curve (for the standard complex structure) so that $u(\bd\Sigma)\cap B(a)=\emptyset$ and $u^{-1}(0)\ne \emptyset$. Then the symplectic area of $u$ is at least $a$.
\end{prop}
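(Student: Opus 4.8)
The plan is to reduce to the classical monotonicity lemma for minimal surfaces by an isoperimetric-type argument on the area function of the portion of $u$ inside balls of varying radius. Specifically, fix a point $z_0\in u^{-1}(0)$ and for $0<r\le (a/\pi)^{1/2}$ (so that the Euclidean ball $B_r$ of radius $r$ has capacity $\pi r^2\le a$) consider the closed subset $\Sigma_r:=u^{-1}(\overline{B_r})\subset \Sigma$; by hypothesis $\Sigma_r$ is disjoint from $\bd\Sigma$ for $r<(a/\pi)^{1/2}$, so for almost every such $r$ (Sard applied to the smooth function $z\mapsto |u(z)|$, away from the branch points of $u$) the level set $u^{-1}(\bd B_r)$ is a disjoint union of embedded circles and $\Sigma_r$ is a compact surface-with-boundary on which $u$ restricts to a proper holomorphic map into $\overline{B_r}$ sending $\bd\Sigma_r$ into $\bd B_r$.

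Next I would introduce $A(r):=\operatorname{area}(u|_{\Sigma_r})=\int_{\Sigma_r}u^*\omega_{\mathrm{std}}$, which for a holomorphic curve equals the Riemannian area of the image counted with multiplicity, hence is non-decreasing in $r$ and the quantity we must bound below by $a$ at $r=(a/\pi)^{1/2}$. The two standard ingredients are: (i) the coarea/Stokes computation $A'(r)=\tfrac1r\ell(r)$ where $\ell(r)=\operatorname{length}(u|_{u^{-1}(\bd B_r)})$ — this uses that $u$ is holomorphic so that the image meets the sphere $\bd B_r$ orthogonally, exactly as in the minimal surface proof — together with the monotonicity-type differential inequality $A(r)\le \tfrac{r}{2}\ell(r)$, which follows because $u(\Sigma_r)$ is a minimal surface with boundary on $\bd B_r$ passing through the center $0$, so that coning off the boundary to $0$ and applying Stokes (or the first variation of area / the classical isoperimetric inequality for minimal surfaces) gives $2A(r)\le r\,\ell(r)$. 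Combining, $A(r)\le \tfrac{r}{2}A'(r)$, i.e. $(\log A(r))'\ge 2/r$, equivalently $r^{-2}A(r)$ is non-decreasing in $r$.

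Then I would extract the constant by letting $r\to 0^+$: since $u$ is an immersion near $z_0$ up to branching and $u(z_0)=0$, a local expansion $u(z)=u(z_0)+v\,z^k+o(|z|^k)$ shows $\lim_{r\to 0^+}r^{-2}A(r)\ge \pi$ (the multiplicity $k\ge 1$ only helps). Hence $r^{-2}A(r)\ge \pi$ for all admissible $r$, and taking $r\uparrow (a/\pi)^{1/2}$ yields $A\big((a/\pi)^{1/2}\big)\ge \pi\cdot\tfrac{a}{\pi}=a$. Since $A\big((a/\pi)^{1/2}\big)\le \int_\Sigma u^*\omega_{\mathrm{std}}$, this is exactly the claim.

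The main obstacle, and the step requiring the most care, is the non-smoothness caused by branch points of $u$ and the resulting need to justify the coarea formula and the Stokes/isoperimetric identities on $\Sigma_r$ for a full-measure set of radii $r$: one must argue that the branch locus is finite (or at least that $|u|$ is Morse for a.e. level away from it), handle the finitely many $r$ for which $\bd B_r$ passes through a critical value, and verify that the monotonicity inequality $2A(r)\le r\ell(r)$ survives multiplicity. All of this is classical — it is the Lelong/monotonicity argument, see for instance the treatments in the symplectic literature (Gromov, Hummel, Sikorav) — so I would cite a standard reference for the minimal-surface monotonicity lemma and only indicate the holomorphic-orthogonality point $(i)$ and the behavior as $r\to 0$, rather than reprove the isoperimetric inequality for minimal surfaces from scratch.
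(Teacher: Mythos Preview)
Your approach via the classical minimal-surface monotonicity lemma is sound in outline and is the standard route to this statement; it is genuinely different from the paper's argument. Two corrections to your sketch, however. First, the coarea relation should read $A'(r)\ge\ell(r)$, not $A'(r)=\tfrac{1}{r}\ell(r)$: the latter is dimensionally inconsistent with your own conclusion $(\log A)'\ge 2/r$ (it would give $(\log A)'\ge 2/r^{2}$ instead), so the extra $1/r$ is presumably a slip. Second, holomorphic curves in $\C^{n}$ for $n\ge 2$ need not meet round spheres orthogonally even when they pass through the centre --- e.g.\ $z\mapsto(z,z^{2})$ at $z=1$, where the radial direction $(1,1)$ is not a complex multiple of the tangent direction $(1,2)$ --- and orthogonality is not what drives the inequality. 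What one actually uses is $|\nabla|u||\le|\partial_{s}u|$ in conformal coordinates (Cauchy--Schwarz), which gives $A'(r)\ge\ell(r)$; combined with the Stokes bound $A(r)=\int_{\partial\Sigma_{r}}u^{*}\lambda\le\tfrac{r}{2}\ell(r)$ this yields $(\log A)'\ge 2/r$ as desired. Since you plan to cite a reference for the lemma anyway, these are minor.

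The paper takes a different route: it views $\C^{n}\setminus\{0\}$ as the symplectization of the standard contact sphere, removes the finite set $\Gamma=u^{-1}(0)$ to obtain a punctured holomorphic curve with negative ends, and invokes SFT-type asymptotics to conclude these ends limit to Reeb orbits. A Stokes computation with the contact form $\alpha=\lambda/Q$ (where $Q=\pi|z|^{2}$) then bounds the symplectic area below by the minimal positive Reeb period, which equals $1$ for the unit-capacity sphere. This argument makes the constant $a$ appear as a dynamical quantity and meshes with the symplectization language used throughout the paper, at the cost of invoking punctured-curve asymptotics; your differential-geometric argument is more elementary and self-contained.
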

\begin{proof}
  We sketch the argument. See \cite[\S5.2]{wenc} for more details.

  It suffices to prove the theorem when $a=1$. Let $Q(z)=\pi \abs{z}^{2}$, so $Q<1$ cuts out $B(1)$. Interpret $SY=\C^{n}-\set{0}$ as the symplectization of the standard contact sphere $Y$, so that $Q$ is a contact Hamiltonian.

  Analytic continuation implies that $u^{-1}(0)$ is a finite (non-empty) set $\Gamma$. Interpret $u$ as a \emph{punctured} holomorphic map $\Sigma-\Gamma\to SY$, so that $\bd \Sigma$ is mapped into in the region $Q\ge 1$. It can be shown that the standard complex structure is of SFT type (for the standard Reeb field $R=X_{Q}$), in the sense of \cite{behwz}, and thus $u$ will be asymptotic at its punctures to Reeb orbits. The contact form is $\alpha=\lambda/Q$. Standard results from SFT then imply that the integral of $\d\alpha$ will be at least the minimal positive action of a Reeb orbit. It is not hard to see that:
  \begin{equation*}
    \text{minimal positive action}\le\int_{\Sigma-\Gamma} \d\alpha =\int_{\bd\Sigma} \alpha \le \int_{\bd\Sigma} \lambda =\int_{\Sigma} \d\lambda=\text{symplectic area}.
  \end{equation*}
  It is also a standard fact that, with our conventions, the minimal positive action of a Reeb orbit is exactly $1$. Thus we conclude the desired result.
\end{proof}
\begin{cor}
  Let $(\Sigma,\bd\Sigma)$ be a compact Riemann surface, $\bd\Sigma=\bd\Sigma_{1}\sqcup \bd\Sigma_{2},$ and suppose $u:\Sigma\to \C^{n}$ is holomorphic with $u(\bd\Sigma_{1})\cap B(a)=\emptyset$, $u(\bd\Sigma_{2})\subset \R^{n}$, and $u^{-1}(0)\ne\emptyset$. Then the symplectic area of $u$ is at least $a/2$.
\end{cor}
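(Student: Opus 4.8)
The idea is to reduce the corollary to the proposition by the standard doubling/reflection trick across $\R^n$. The complex conjugation map $c:\C^n\to\C^n$, $c(z)=\bar z$, is anti-holomorphic, fixes $\R^n$ pointwise, and is symplectic up to sign: $c^*\omega_{\mathrm{std}}=-\omega_{\mathrm{std}}$. Given $u:\Sigma\to\C^n$ as in the statement, I would form the doubled surface $\widehat\Sigma$ obtained by gluing $\Sigma$ to a second copy $\cl\Sigma$ (with reversed complex structure) along the boundary piece $\bd\Sigma_2$; the remaining boundary of $\widehat\Sigma$ is $\bd\Sigma_1\sqcup\cl{\bd\Sigma_1}$. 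Define $\widehat u:\widehat\Sigma\to\C^n$ by $u$ on $\Sigma$ and by $z\mapsto c(u(z))$ on $\cl\Sigma$. Since $u(\bd\Sigma_2)\subset\R^n$ and $c$ is the identity on $\R^n$, the two definitions agree along the gluing locus; since $c$ is anti-holomorphic and the complex structure on $\cl\Sigma$ is reversed, $\widehat u$ is holomorphic on the interior of $\cl\Sigma$, and by elliptic regularity (Schwarz reflection for pseudoholomorphic curves with a totally real boundary condition — here $\R^n$, which is totally real for $J_{\mathrm{std}}$) it is holomorphic across $\bd\Sigma_2$ as well. Thus $\widehat u:\widehat\Sigma\to\C^n$ is a compact holomorphic curve.

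Next I would check the hypotheses of the proposition for $\widehat u$. Its boundary is $\bd\widehat\Sigma=\bd\Sigma_1\sqcup\cl{\bd\Sigma_1}$; on $\bd\Sigma_1$ we have $u(\bd\Sigma_1)\cap B(a)=\emptyset$ by assumption, and on $\cl{\bd\Sigma_1}$ the image is $c(u(\bd\Sigma_1))$, which also misses $B(a)$ because $c$ preserves the ball $B(a)=\{\pi|z|^2<a\}$. Moreover $\widehat u^{-1}(0)\supseteq u^{-1}(0)\ne\emptyset$. Hence the proposition applies and gives that the symplectic area of $\widehat u$ is at least $a$. Finally, the area of $\widehat u$ is the sum of the area of $u|_\Sigma$ and the area of $c\circ u|_{\cl\Sigma}$; the latter equals the area of $u|_\Sigma$ because reversing the orientation of the domain and applying $c$ (which negates $\omega$) reverses sign twice. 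Therefore the area of $\widehat u$ is exactly twice the area of $u$, and we conclude that the symplectic area of $u$ is at least $a/2$.

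The one point requiring a little care — and the main obstacle — is the regularity of $\widehat u$ across the seam $\bd\Sigma_2$: a priori $\widehat u$ is only continuous there, pieced together from two holomorphic maps satisfying a matching totally-real boundary condition. This is precisely the setting of the Schwarz reflection principle for $J$-holomorphic maps with totally real boundary (here the boundary condition is the linear subspace $\R^n\subset\C^n$, which is Lagrangian and totally real), and the standard argument — writing things in local half-disk coordinates, using that $u$ solves $\bd_s u + J_{\mathrm{std}}\,\bd_t u = 0$ with $u(\text{real axis})\subset\R^n$, and invoking elliptic boundary regularity — shows $\widehat u$ is smooth and holomorphic across the seam. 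One should also note that $\widehat\Sigma$ is again a compact Riemann surface (possibly with boundary), so the proposition as stated applies verbatim; no punctures are introduced since $\bd\Sigma$ was already the full boundary and the seam becomes an interior curve. With these remarks, the reduction is complete.
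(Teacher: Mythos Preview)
Your proposal is correct and follows exactly the approach the paper takes: the paper's proof is the single sentence ``Use the Schwarz reflection principle and apply the previous result, bearing in mind we are using the standard complex structure,'' and you have simply spelled out the details of that reflection argument carefully.
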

\begin{proof}
  Use the Schwarz reflection principle and apply the previous result, bearing in mind we are using the standard complex structure.
\end{proof}

\section{A priori energy bounds and compactness of the moduli space}
\label{sec:compactness}

\subsection{Energy bounds imply bounds on all derivatives}
\label{sec:energy_bounds}

The results in this section explain why, for Floer's equation on Liouville manifolds, energy bounds are sufficient to obtain strong compactness results. In general, energy bounds are \emph{not} sufficient because of the bubbling phenomenon, but bubbling can be precluded in Liouville manifolds.

\subsubsection{Elliptic regularity}
\label{sec:elliptic_regularity}

Let $W$ be an almost complex manifold. The elliptic regularity result in this section show that $C^{1}$ bounds are enough to ensure convergence results for sequences of solutions to holomorphic type equations. The result is stated for maps defined on either $\Sigma(1)=D(1)$, in which case $\bd\Sigma(1)=\emptyset$, or $\Sigma(1)=D(1)\cap \cl{\mathbb{H}}$, in which case $\bd\Sigma(1)=D(1)\cap \R$.

\begin{prop}
  Suppose that $J_{n,z}\to J_{\infty,z}$, $A_{n,z}\to A_{\infty,z}$, $L_{n,z}\to L_{\infty,z}$ are $C^{\infty}$ convergent sequences of complex structures, vector fields, and totally real submanifolds, respectively, and $u_{n}:\Sigma(1)\to W$ satisfies:
  \begin{equation*}
    \left\{
      \begin{aligned}
        &\bd_{s}u_{n}+J_{n,z}(u_{n})\bd_{t}u_{n}=A_{n,z}(u_{n})\\
        &u_{n}(z)\in L_{n,z}\text{ for }z\in \bd\Sigma(1)
      \end{aligned}
    \right.
  \end{equation*}
  Suppose $u_{n}(0)$ converges and $\abs{\bd_{s} u_{n}}, \abs{\bd_{t}u_{n}}$ remain bounded, as measured by a complete metric on the target $W$. Then there is a smooth map $u_{\infty}$ satisfying:
  \begin{equation*}\left\{
    \begin{aligned}
      &\bd_{s}u_{\infty}+J_{\infty,z}(u_{\infty})\bd_{t}u_{\infty}=A_{\infty,z}(u_{\infty})\\
      &u_{\infty}(z)\in L_{\infty,z}\text{ for }z\in \bd\Sigma(1)
    \end{aligned}\right.
  \end{equation*}
  so that $u_{n}$ has a subsequence converging to $u_{\infty}$ in $C^{\infty}(\Sigma(r),W)$ for every $r<1$.
\end{prop}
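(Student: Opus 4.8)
The result is the standard ``$C^{1}$ bounds imply $C^{\infty}_{\mathrm{loc}}$ compactness'' statement for pseudoholomorphic-type equations, so the plan is an elliptic bootstrap followed by an Arzel\`a--Ascoli argument, and I will only sketch the structure. First I would check that all the $u_{n}$ take values in a fixed compact set: since $u_{n}(0)$ converges and $\abs{\bd_{s}u_{n}},\abs{\bd_{t}u_{n}}$ are uniformly bounded, the image $u_{n}(\Sigma(1))$ lies in a closed metric ball of fixed radius about $\lim u_{n}(0)$, and by the Hopf--Rinow theorem (using completeness of the target metric) this ball is a compact set $K$. Cover $K$ by finitely many coordinate charts; by the gradient bound, for each fixed $r<1$ the surface $\Sigma(r)$ subdivides into finitely many pieces, each of which every $u_{n}$ maps into a single chart, so it suffices to obtain uniform $C^{k}$ bounds on each piece. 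For a piece meeting $\bd\Sigma(1)$ one additionally straightens the totally real submanifolds $L_{n,z}$; since $L_{n,z}\to L_{\infty,z}$ in $C^{\infty}$ this is achieved by $C^{\infty}$-bounded coordinate changes, reducing to the model $L\cong\R^{n}\subset\C^{n}$, where estimates up to the boundary follow from the interior ones by Schwarz reflection (alternatively, one invokes directly the standard elliptic estimates for the Cauchy--Riemann operator with totally real boundary conditions).

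In a chart the equation reads $\bd_{s}u_{n}+J_{n,z}(u_{n})\bd_{t}u_{n}=A_{n,z}(u_{n})$. The uniform gradient bound already gives a uniform $C^{0,1}$, hence $C^{0,\alpha}$, bound on $u_{n}$, so the coefficient $z\mapsto J_{n,z}(u_{n}(z))$ and the inhomogeneity $z\mapsto A_{n,z}(u_{n}(z))$ are uniformly bounded in $C^{0,\alpha}$. Shrinking the charts so that $J_{n,z}(u_{n}(z))$ stays near the standard complex structure, the interior Schauder estimate for the perturbed Cauchy--Riemann operator yields a uniform $C^{1,\alpha}_{\mathrm{loc}}$ bound on $u_{n}$. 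Then iterate: if $u_{n}$ is uniformly bounded in $C^{k,\alpha}$, the compositions $J_{n,z}(u_{n}(z))$ and $A_{n,z}(u_{n}(z))$ are uniformly bounded in $C^{k,\alpha}$, and the same Schauder estimate upgrades this to a uniform $C^{k+1,\alpha}_{\mathrm{loc}}$ bound. This is the usual elliptic regularity bootstrap for $J$-holomorphic type maps; the inductive step is routine, and the conclusion is a uniform $C^{k}(\Sigma(r))$ bound for every $k$ and every $r<1$.

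Given these uniform bounds, Arzel\`a--Ascoli together with a diagonal argument (over $k$ and over a sequence $r\nearrow 1$) extracts a subsequence converging in $C^{\infty}(\Sigma(r))$ for every $r<1$ to a smooth map $u_{\infty}$. Because $J_{n,z}\to J_{\infty,z}$ and $A_{n,z}\to A_{\infty,z}$ in $C^{\infty}$ on $K$ and $u_{n}\to u_{\infty}$ in $C^{\infty}_{\mathrm{loc}}$, one passes to the limit in the equation, so $u_{\infty}$ solves $\bd_{s}u_{\infty}+J_{\infty,z}(u_{\infty})\bd_{t}u_{\infty}=A_{\infty,z}(u_{\infty})$; and since $L_{n,z}\to L_{\infty,z}$ in $C^{\infty}$ the boundary condition passes to the limit as well, giving $u_{\infty}(z)\in L_{\infty,z}$ on $\bd\Sigma(1)$.

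The one genuinely delicate point is starting the bootstrap: a priori $u_{n}$ is only Lipschitz, so $J_{n,z}(u_{n}(z))$ is only a Lipschitz coefficient, and moving the term $J_{n,z}(u_{n})\bd_{t}u_{n}$ to the right-hand side leaves only an $L^{\infty}$ inhomogeneity. One must therefore either use the Schauder estimate in the form valid for merely $C^{0,\alpha}$ (indeed VMO) coefficients, or first run a difference-quotient argument together with the Calder\'on--Zygmund $L^{p}$ estimate to promote the Lipschitz bound to a $C^{1,\alpha}$ bound before the Schauder iteration begins. Carrying this out uniformly in $n$, while simultaneously treating the totally real boundary condition, is where the actual work lies; both ingredients are, however, well documented in the pseudoholomorphic curves literature, so I would simply cite them.
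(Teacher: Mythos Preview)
Your proposal is correct and follows the same overall strategy as the paper: localize, straighten the moving totally real boundary condition to a fixed $\R^{n}\subset\C^{n}$, run an elliptic bootstrap, and conclude by Arzel\`a--Ascoli with a diagonal argument. The only genuine difference is in the implementation of the bootstrap step. You work in H\"older spaces and invoke interior and boundary Schauder estimates for the perturbed Cauchy--Riemann operator directly; the paper instead applies $\bd_{s}-J_{z}(u)\bd_{t}$ to the equation to obtain a second-order equation governed by the standard Laplacian, arranges (via an affine correction $u_{n}+a_{n}(s)+tb_{n}(s)$) that $\bd_{s}u_{n}+J_{0}\bd_{t}u_{n}=0$ along the boundary, and then uses the $W^{k,2}$ elliptic estimates for $\Delta$ with Dirichlet and Neumann conditions on the real and imaginary parts, followed by Sobolev embedding. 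Your Schauder route is equally standard and has the virtue of making the first step of the bootstrap (from Lipschitz to $C^{1,\alpha}$) explicit, which you correctly flag as the delicate point; the paper's $L^{2}$-based reduction to the Laplacian sidesteps that issue but requires the extra algebraic manipulation and the boundary normalization. Either approach suffices.
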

\begin{proof}
  The result is a well-known bootstrapping argument. One reduction is that it suffices to prove the result for $r$ sufficiently small (and then apply the result to a covering by small disks). See \cite[Proof of Theorem 2.24]{wendl-sft}. Another reduction is that one can replace $u_{n}(z)$ by $\Phi_{n,z}(u_{n}(z))$ so that $\Phi_{n,z}$ converges and $\Phi_{n,z}(L_{n,z})=L$ for a fixed totally real submanifold $L$. In coordinates near the limit of $u_{n}(0)$, one can arrange that $L$ is $\R^{n}\subset \C^{n}$. Moreover, one can apply a $z$-dependent coordinate change so that $J|_{\R^{n}}=J_{0}|_{\R^{n}}$ is the standard complex structure. For a final reduction, replace $u_{n}(s,t)$ by $u_{n}(s,t)+a_{n}(s)+tb_{n}(s)$ where $a_{n},b_{n}$ are $\R^{n}$-valued. Then we can arrange so that $\bd_{s}u_{n}+J_{0}\bd_{t}u_{n}=0$ holds along $\bd \Sigma(r)$. The $C^{\infty}$ size of $a_{n},b_{n}$ (and the other coordinate changes) are controlled by the $C^{\infty}$ sizes of $A_{n,z}$, $J_{n,z}$, $L_{n,z}$.

  Given an equation of the form $\bd_{s}u+J_{z}(u)\bd_{t}u_{n}=A_{z}(u)$, one can apply $\bd_{s}-J_{z}(u)\bd_{t}$ to both sides to obtain an equation involving the standard Laplacian $\Delta=\bd_{s}^{2}+\bd_{t}^{2}$.

  One then shows that $u_{n}$ is bounded in $W^{k,2}(\Sigma(r))$ for every $k$ by using the elliptic estimates for the  Laplacian with Dirichlet and Neumann boundary conditions (applied to the real and imaginary parts), see \cite[Appendix C]{robbinsalamon} and \cite{cant_chen}. Then one uses the Sobolev embedding theorem to prove that $u_{n}$ is bounded in $C^{k}(\Sigma(r))$ for all $k$. A standard extension of the Arzel\`a-Ascoli theorem ensures that $C^{k}$ bounds for all $k$ ensures subsequences which converge in $C^{\infty}$. The details are left to the reader.
\end{proof}

\subsubsection{A priori bounds on first derivatives}
\label{sec:apriori_first_derivative}
Bubbling analysis for holomorphic curve type equations allows one to conclude that either a sequence $u_{n}$ has a bounded first derivative, or there exists a non-constant holomorphic plane $v:\C\to W$ or half-plane with bounded first derivative. One obtains $v$ by rescaling shrinking regions on the domain of $u_{n}$, and then applying the elliptic regularity results from \S\ref{sec:elliptic_regularity}. The energy of $v$ will be bounded from above by the limiting energy of $u_{n}$, using the conformal invariance of energy. For details on bubbling analysis see \cite[Appendix C]{cant_chen}.

In Liouville manifolds and symplectizations, there are no finite energy holomorphic planes or half planes, and hence $C^{1}$ bounds follow from energy bounds. The precise non-existence statement is:
\begin{prop}
  Let $\Sigma=\C$ or $\Sigma=\cl{\mathbb{H}}$. Let $(W,\lambda)$ be an exact symplectic manifold, let $L$ be an exact Lagrangian, and let $J$ be an $\omega$-tame complex structure.  There is no non-constant holomorphic map $u:\Sigma\to (W,L)$ whose image lies in a compact set. In addition, if $W$ is a Liouville manifold (or positive half of a symplectization), $L$ is cylindrical outside a compact set, and $J$ is translation invariant outside a compact set, then there are no non-constant holomorphic maps $u:\Sigma\to (W,L)$ with finite symplectic area.
\end{prop}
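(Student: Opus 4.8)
The plan is to prove the first statement by a monotonicity (isoperimetric) argument, and then reduce the second statement to the first.

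For the first statement I would fix a compact set $K$ containing the image of $u$ and exhaust $\Sigma$ by the disks (or half‑disks) $\Sigma(R)$ of \S\ref{sec:elliptic_regularity}, writing $C_{R}$ for the circular part of $\bd\Sigma(R)$. Set $e(R)=\int_{\Sigma(R)}u^{*}\omega$; since $u$ is $J$‑holomorphic and $J$ is $\omega$‑tame, $e$ is a non‑negative, non‑decreasing, $C^{1}$ function of $R$, with $e'(R)=R\int\norm{\bd_{r}u}_{J}^{2}\,\d\theta$. Stokes' theorem gives $e(R)=\int_{C_{R}}u^{*}\lambda+\int u^{*}\lambda$, the second integral being over the part of $\bd\Sigma(R)$ that lies on $\bd\Sigma$ and is mapped into $L$; since $\lambda|_{L}=\d f_{L}$ for a function $f_{L}$ bounded on the compact set $L\cap K$, that term is bounded independently of $R$. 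On $C_{R}$ one has the polar identity $\bd_{\theta}u=RJ\bd_{r}u$, and since $J$ is a $\norm{\cdot}_{J}$‑isometry and $\norm{\lambda}_{J}$ is bounded on $K$, Cauchy--Schwarz gives $\abs{\int_{C_{R}}u^{*}\lambda}\le C\,R^{1/2}e'(R)^{1/2}$ for a constant $C$ depending only on $K$, $J$, $\lambda$. Hence $e(R)^{2}\le \mathrm{const}+C'R\,e'(R)$. If $u$ were not constant, then $e$ would eventually be positive and larger than any fixed bound, so one could absorb the additive constant and integrate the resulting inequality $-\tfrac{\d}{\d R}e(R)^{-1}\ge \tfrac{1}{2C'R}$, forcing $e(R)^{-1}$ to become negative --- a contradiction. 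When $\Sigma=\C$ the additive constant is absent and this already gives $e\equiv 0$. In general it shows $e$ is bounded, so $u$ has finite symplectic area; the removable‑singularity theorem then extends $u$ over the puncture at infinity to a holomorphic sphere, or a holomorphic disk with boundary on $L$, and Stokes' theorem together with exactness of $\lambda$ and of $\lambda|_{L}$ forces the area of the extension --- hence $E(u)$ --- to vanish, so $u$ is constant.

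For the second statement, by the first it is enough to show that a finite‑area holomorphic $u$ has precompact image (a cylindrical Lagrangian has $\lambda|_{L}=0$ on its cylindrical part, so exactness causes no difficulty). Near its puncture at infinity I would regard $u$ as a finite‑energy holomorphic cylinder, or half‑strip with boundary on $L$, into the symplectization end, where $J$ is translation invariant and $L$ cylindrical. There the Liouville coordinate --- more generally a $J$‑convex exhaustion function, cf. \S\ref{sec:maximum_principle} --- is plurisubharmonic, so its composition with $u$ is subharmonic, and feeding this into a maximum‑principle argument (using $E(u)<\infty$, now measured with the translation‑invariant metric $\abs{v}^{2}=Q^{-1}\norm{v}_{J}^{2}$ of \S\ref{sec:translation_invariant_complex_structures}) shows that $u$ cannot escape to infinity. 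Equivalently, one quotes the removable‑singularity theorem for finite‑area holomorphic curves in geometrically bounded exact manifolds, whose hypotheses are secured by the translation‑invariance of $J$ and the cylindricity of $L$ at infinity. Either way the image of $u$ lies in a compact set, and the first statement finishes the proof.

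The step I expect to be the main obstacle is this last one: controlling $u$ near the puncture at infinity in the non‑compact setting. The difficulty is genuine, because $\norm{\cdot}_{J}^{2}$ is \emph{not} translation invariant (\S\ref{sec:translation_invariant_complex_structures}), so the taming constant of $J$ and the pointwise size of $\lambda$ degenerate along the Liouville flow; and one cannot simply cite Theorem \ref{theorem:maximum_principle}, since the ``Hamiltonian'' $H\equiv 0$ relevant to holomorphic (rather than Floer) curves is never admissible --- outside $\Omega$ every point is a constant ``orbit.'' One must therefore either carry the isoperimetric estimate through in the translation‑invariant metric while tracking the Liouville coordinate, or appeal to the standard removable‑singularity machinery.
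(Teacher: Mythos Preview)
Your first argument is the paper's own: the isoperimetric differential inequality $A(r)^{2}\le CrA'(r)$ integrates to $\frac{1}{C}\log(r_{2}/r_{1})\le A(r_{1})^{-1}-A(r_{2})^{-1}$, contradicting $A>0$ as $r_{2}\to\infty$. Your handling of the additive constant coming from the exact-Lagrangian boundary, and the removable-singularity patch for the case where $e$ stays bounded, is more careful than the paper's one-line sketch but not logically different.

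For the second statement your two proposed routes diverge from the paper's, and one of them has a genuine gap. Plurisubharmonicity of the Liouville coordinate requires $J$ to be of contact or SFT type (so that $JZ$ is the Reeb direction and $J$ preserves $\xi$); mere \emph{translation invariance} of $J$ --- which is all the proposition assumes, and the paper emphasises in \S\ref{sec:outline_and_discussion} that it works in precisely this generality --- only says that $J$ commutes with the Liouville flow, and does not force any $J$-convex exhaustion to exist. So the maximum-principle route you sketch is not available here. Your alternative, quoting removable singularities in geometrically bounded targets, is the right instinct, and the paper in effect carries it out by hand: it invokes a ``small energy implies small diameter'' lemma --- for every $\delta>0$ there is $\epsilon>0$ such that a holomorphic cylinder or strip with $\int\abs{\bd_{s}u}^{2}<\epsilon$ in the translation-invariant metric $\abs{v}^{2}=Q^{-1}\norm{v}_{J}^{2}$ has diameter $<\delta$ --- proved via the mean-value inequality for the energy density. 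Since the $\omega$-area of $u$ is finite and $Q$ is bounded below on the region where $J$ is translation invariant, the translation-invariant energy of the tail $\Sigma\setminus\Sigma(r)$ tends to zero, giving bounded diameter there; completeness of the translation-invariant metric then yields precompact image, and the first part finishes. This sidesteps both obstacles you flag: no admissible Hamiltonian is invoked, and the blow-up of $\norm{\lambda}_{J}$ along the Liouville flow never enters.
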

\begin{proof}
  We argue by contradiction. The first part is proved via an elementary differential inequality. Let $\Sigma(r)=\Sigma\cap D(r)$, and define:
  \begin{equation*}
    A(r)=\int_{\Sigma(r)}\norm{\bd_{s}u}^{2}_{J_{s}}\d s\d t=\int_{\Sigma(r)}u^{*}\d\lambda.
  \end{equation*}
  Since $u$ is non-constant, $A(r)$ is positive. Using Stokes' theorem, estimate:
  \begin{equation*}
    A(r)^{2}\le C rA'(r)\implies \frac{1}{C}\log(r_{2}/r_{1})\le \frac{1}{A(r_{1})}-\frac{1}{A(r_{2})},
  \end{equation*}
  where $r_{1}<r_{2}$ and $C$ depends on the (bounded) size of $\lambda$ on the image of $u$ (which we assume is precompact). Take the limit as $r_{2}\to\infty$ to conclude a contradiction.

  For the second part, we appeal to the following fact: for every $\delta>0$, there is an $\epsilon>0$ so that, if a holomorphic cylinder/strip $u:[-R-1,R+1]\times S\to W$, where $S=\R/\Z$ or $S=[0,1]$, satisfies $\int \abs{\bd_{s}u}^{2}\d s\d t<\epsilon$, then the diameter of $u([-R,R]\times S)$ is less than $\delta$ (using some metric which is translation invariant in the ends). In the case when $S=[0,1]$ we require that $u$ takes boundary values on $L$. This fact can be proved using the mean-value property for the energy density $\abs{\bd_{s}u}^{2}$, see \cite{mcduffsalamon,robbinsalamon}. A similar argument is given in \cite[Proposition 6.1]{cant_chen}.

  Since $u$ has finite symplectic area, and $\abs{-}=Q^{-1}\norm{-}_{J}$ defines a translation invariant metric, we conclude that regions of the form $\Sigma\setminus \Sigma(r)$ have bounded diameter for $r$ sufficiently large. Since the translation invariant metric is complete, this implies that $u$ has a precompact image, and hence we can apply the first part. This completes the proof.
\end{proof}

\subsection{A priori energy estimate for sufficiently negative Hamiltonians}
\label{sec:suff_neg}

\begin{prop}\label{prop:suff_neg_sufficient}
  Suppose that $H_{t}$ is sufficiently negative for $\Omega,\mathfrak{B}$, and let $\gamma(t)$ be a lift of $\mathfrak{B}(\pi,t)$ for some $\pi$. Then:
  \begin{equation}\label{eq:apriori_estimate}
    \mathscr{A}(\gamma)\le \sup\set{\mathscr{A}(\eta):\eta(t)\in \Omega\text{ for all $t$, and $\eta$ is a lift of $\mathfrak{B}$}}<\infty.
  \end{equation}
\end{prop}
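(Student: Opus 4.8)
The plan is to dominate $\mathscr{A}(\gamma)$ by $\mathscr{A}(\eta)$ for a well-chosen lift $\eta$ of $\mathfrak{B}(\pi,-)$ with image in $\Omega$, so that $\mathscr{A}(\gamma)\le\mathscr{A}(\eta)\le S$, where $S$ denotes the right-hand side of \eqref{eq:apriori_estimate}. Finiteness of $S$ is immediate: for a lift $\eta$ with $\eta(\R/\Z)\subset\Omega$ one has $\mathscr{A}(\eta)=\int_{0}^{1}H_{t}(\eta)\,\d t-\int\eta^{*}\lambda$, and both terms are bounded using compactness of $\Omega$ and of $P\times\R/\Z$; also $S>-\infty$, since the zero-section lift $t\mapsto(\mathfrak{B}(\pi,t),0)$ always competes in the supremum.

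Fix a lift $\gamma$ of $\mathfrak{B}(\pi,-)$; by a density argument (perturbing only the fibre component) we may assume $\gamma$ is transverse to $\bd\Omega$, so $K:=\gamma^{-1}(\Omega)$ is a finite union of closed intervals and $U:=\gamma^{-1}(W\setminus\Omega)$ a finite union of open intervals. Let $r$ be the positive contact Hamiltonian with $\Omega=\set{r\le 1}\cup\mathrm{Sk}(W)$, put $g(t)=\max\set{0,\log r(\gamma(t))}\ge 0$, and define the retraction $\eta(t)=\rho_{-g(t)}(\gamma(t))$. Then $\eta$ is a lift of $\mathfrak{B}(\pi,-)$ with $\eta(\R/\Z)\subset\Omega$, equal to $\gamma$ on $K$ and valued in $\bd\Omega$ on $U$, and $\gamma(t)=\rho_{g(t)}(\eta(t))$ for all $t$. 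The essential computation is a scaling identity: if $\nu(t)\in\bd\Omega=\set{r=1}$ and $\phi(t)\ge 0$, the action density of $t\mapsto\rho_{\phi(t)}(\nu(t))$ equals $e^{\phi(t)}a_{\nu}(t)$, where $a_{\nu}(t)=H_{t}(\nu(t))-\iota_{\nu'(t)}\lambda$; this uses $\iota_{Z}\lambda=0$ (killing the $\phi'(t)Z$-component of the time derivative), the $1$-homogeneity $H_{t}\circ\rho_{\sigma}=e^{\sigma}H_{t}$, and $\rho_{\sigma}^{*}\lambda=e^{\sigma}\lambda$, all valid on $\set{r\ge 1}$. Applying this on $U$ and using $g\equiv 0$ on $K$ gives $a_{\gamma}(t)=e^{g(t)}a_{\eta}(t)$ for a.e.\ $t$, hence
\begin{equation*}
  \mathscr{A}(\gamma)=\mathscr{A}(\eta)+\int_{U}\big(e^{g(t)}-1\big)a_{\eta}(t)\,\d t .
\end{equation*}

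The crux is to show $a_{\eta}(t)\le 0$ for a.e.\ $t\in U$; granting this, the integral above is $\le 0$ (as $e^{g}-1\ge 0$ on $U$), so $\mathscr{A}(\gamma)\le\mathscr{A}(\eta)\le S$. To prove it, fix any continuous $\phi\colon\R/\Z\to[0,\infty)$ with support in $U$. Choose a smooth lift $\mu$ of $\mathfrak{B}(\pi,-)|_{K}$ valued in $\bd\Omega$ and agreeing with $\gamma$ at the endpoints of the intervals of $K$; such $\mu$ exists because the pullback of $\bd\Omega\to M$ over each such interval is a trivial $S^{n-1}$-bundle with path-connected fibre (for $n=\dim M\ge 2$; the case $n=1$ is checked directly). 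Let $\zeta_{\phi}$ be the loop equal to $\rho_{\phi(t)}(\eta(t))$ on $U$ and to $\rho_{\psi(t)}(\mu(t))$ on $K$, where $\psi\ge 0$ is a small feathering function vanishing at $\bd K$; then $\zeta_{\phi}$ is a piecewise-smooth lift of $\mathfrak{B}(\pi,-)$ with values in $\set{r\ge 1}$, so $\rho_{\delta}\circ\zeta_{\phi}$ lies strictly outside $\Omega$ for $\delta>0$ and \eqref{eq:suff_neg} yields $e^{\delta}\mathscr{A}(\zeta_{\phi})=\mathscr{A}(\rho_{\delta}\circ\zeta_{\phi})\le 0$, hence $\mathscr{A}(\zeta_{\phi})\le 0$. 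By the scaling identity $\mathscr{A}(\zeta_{\phi})=\int_{U}e^{\phi(t)}a_{\eta}(t)\,\d t+C$, where $C:=\int_{K}e^{\psi(t)}a_{\mu}(t)\,\d t$ is independent of $\phi$. Thus $\int_{U}e^{\phi}a_{\eta}\le -C$ for all admissible $\phi$; testing with a mollified multiple $\phi=n\chi_{A}$ for a measurable $A$ with $\cl{A}\subset U$ and letting $n\to\infty$ forces $\int_{A}a_{\eta}\le 0$ for every such $A$, whence $a_{\eta}\le 0$ a.e.\ on $U$.

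I expect the last step to be the main obstacle. One must arrange $\zeta_{\phi}$ to be an honest lift of $\mathfrak{B}(\pi,-)$ that stays outside $\Omega$, which is where the topology of $\bd\Omega$ over the base loop enters (through the connecting section $\mu$ and the feathering near $\bd K$), and one must keep the $K$-contribution $C$ bounded uniformly in $\phi$ — this is exactly what upgrades the integral constraint \eqref{eq:suff_neg} to the pointwise sign $a_{\eta}|_{U}\le 0$. The scaling identity, the transversality reduction, and the finiteness of $S$ are routine.
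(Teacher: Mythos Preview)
Your argument is correct and follows essentially the same route as the paper: both establish the scaling identity $\mathscr{A}(\rho_{\beta}\xi)=\int e^{\beta}(H_{t}(\xi)\,\d t-\xi^{*}\lambda)$ for loops in the homogeneous region, use it together with test loops lying entirely outside $\Omega$ to force the action density to be $\le 0$ pointwise on $\gamma^{-1}(\Omega^{c})$, and then retract $\gamma$ along the Liouville flow into $\Omega$ without decreasing $\mathscr{A}$. The only cosmetic differences are that the paper works one excursion interval at a time (constructing its test loop $\xi$ to agree with $\gamma$ on that interval) and passes to a limit at the end, whereas you retract globally to a piecewise-$C^{1}$ curve $\eta$ and supply more detail on the connecting section $\mu$; your care about the $S^{n-1}$-bundle topology is precisely the point the paper leaves implicit when it writes ``pick some loop $\xi$\ldots with $r(\xi(t))>1$ for all $t$.''
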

\begin{proof}
  Let $r$ be a contact Hamiltonian so that $r(\bd\Omega)=1$. If $\gamma(t)\in \Omega$ for all $t$, then we are done. Otherwise, there is some interval $[t_{0},t_{1}]\subset \R/\Z$ so that $r(\gamma(t))>1$ for all $t\in [t_{0},t_{1}]$. Consider the contribution to the action coming from this arc:
  \begin{equation*}
    a=\int_{t_{0}}^{t_{1}}H_{t}(\gamma(t))\d t-\gamma^{*}\lambda.
  \end{equation*}
  We claim that $a\le 0$. To see why, pick some loop $\xi(t)$ lifting $\mathfrak{B}(\pi,t)$ so $\xi(t)=\gamma(t)$ for $t\in [t_{0},t_{1}]$ and $r(\xi(t))>1$ for all $t$. Then, if $a>0$, we can use positive functions $\beta(t)$ supported in $(t_{0},t_{1})$ to make $\mathscr{A}(\rho_{\beta(t)}\xi(t))$ arbitrarily large, using the identity:
  \begin{equation}\label{eq:identity_rescaling}
    \mathscr{A}(\rho_{\beta(t)}\xi(t))=\int_{0}^{1}e^{\beta(t)}(H_{t}(\xi(t))\d t-\xi^{*}\lambda).
  \end{equation}
  This contradicts the hypothesis that $H_{t}$ was sufficiently negative for $\mathfrak{B}$.

  Since $t_{0},t_{1}$ were arbitrary, we conclude that $H_{t}(\gamma(t))\d t-\gamma^{*}\lambda\le 0$ holds pointwise whenever $r(\gamma(t))>1$ (using standard orientation of $\R/\Z$). Pick $\beta(t)=-\log(r(\gamma(t)))$ for $t\in [t_{0},t_{1}]$ and suppose $\beta(t)\le 0$ is supported in region where $r(\gamma(t))>1$. Then:
  \begin{equation*}
    \mathscr{A}(\gamma(t))\le \mathscr{A}(\rho_{\beta(t)}\gamma(t)),
  \end{equation*}
  using \eqref{eq:identity_rescaling}. We have achieved $\rho_{\beta(t)}\gamma(t)\in \Omega$. Repeating this argument shows that $\mathscr{A}(\gamma)$ is less than the action of a loop which arbitrarily close to $\Omega$. Taking a limit completes the proof.
\end{proof}

Combining Propositions \ref{prop:suff_neg_sufficient} and \ref{prop:apriori_energy}, one concludes that, if $H_{t}$ is sufficiently negative for $\Omega,\mathfrak{B}$, then, for any $(u,\pi,\ell)\in \mathscr{M}(H_{t},J_{t},\mathfrak{B})$,
\begin{equation*}
  \begin{aligned}
    E(u)=\mathscr{A}(u(0))-\mathscr{A}(u(\ell))&\le \sup\set{\mathscr{A}(\eta):\eta(t)\in \Omega\text{ and }\eta\text{ lifts $\mathfrak{B}$}}-\int_{0}^{1}\min_{M} H_{t}\,\d t.
  \end{aligned}
\end{equation*}
and a straightforward estimate for $\mathscr{A}(\eta)$ completes the proof of Lemma \ref{lemma:apriori_bound_kappa}.

\subsection{Compactness of the moduli space}
\label{sec:compactness_of_moduli}

Let $\mathscr{M}=\mathscr{M}(\mathfrak{B},H_{t},J_{t})$ be the moduli space from \S\ref{sec:definition_of_moduli_space}, and suppose that $H_{t}$ is sufficiently negative for $\mathfrak{B}$.

The results of \S\ref{sec:suff_neg} imply that any sequence $(u_{n},\pi_{n},\ell_{n})\in \mathscr{M}$ will satisfy an a priori energy bound $E(u_{n})\le E(\mathscr{M})<\infty$. If $\ell_{n}$ remains bounded from below, there is enough room in the domain to perform bubbling analysis, and we conclude from \S\ref{sec:apriori_first_derivative} that the derivatives of $u_{n}$ will be bounded. The maximum principle \S\ref{sec:maximum_principle} applies and we conclude $u_{n}$ satsfies $C^{0}$ bounds.

If $\ell_{n}$ also remains bounded from above, then standard applications of Arzel\`a-Ascoli and elliptic regularity imply that $(u_{n},\pi_{n},\ell_{n})$ will converge, after passing to a subsequence. In other words, the map $\ell:\mathscr{M}\to (0,\infty)$ is proper. This completes the proof of the properness part of Theorem \ref{theorem:main}.

It remains to analyze the case when $\ell_{n}\to 0$ and when $\ell_{n}\to \infty$.

\subsubsection{Length shrinking to zero}
\label{sec:length_shrinking_to_zero}
Assume $H_{t}$ is sufficiently negative for $\mathfrak{B}$, to ensure the a priori energy bound. If $(u_{n},\pi_{n},\ell_{n})$ is a sequence in $\mathscr{M}$ and $\ell_{n}\to 0$, then the domain $[0,\ell_{n}]\times \R/\Z$ degenerates and there may not be enough space to apply bubbling analysis. However, one can apply the conformal reparametrization:
\begin{equation*}
  v_{n}:[0,1]\times \R/\ell_{n}^{-1}\Z\hspace{1cm}v_{n}(s,t)=u_{n}(\ell_{n}s,\ell_{n}t),
\end{equation*}
to obtain a new sequence with better control on the domain. The Floer equation for $u_{n}$ implies the following equation for $v_{n}$:
\begin{equation*}
  \left\{
    \begin{aligned}
      &\bd_{s}v_{n}+J_{\ell_{n}t}(v_{n})(\bd_{t}v_{n}-\ell_{n}X_{\ell_{n}t}(v_{n}))=0,\\
      &\pr\circ v_{n}(0,t)=\mathfrak{B}(\pi_{n},\ell_{n}t),\\
      &v_{n}(1,t)\in M.
    \end{aligned}
  \right.
\end{equation*}
The bubbling analysis from \S\ref{sec:apriori_first_derivative} can be applied and one concludes that $v_{n}$ has a bounded first derivative. In particular $v_{n}$ remains a bounded distance from the zero section (using the fact that its right end lies on the zero section).

If $\abs{\bd_{s}v_{n}}$ does not converge to zero, there is $s_{n},t_{n},\epsilon$ so that $\abs{\bd_{s}v_{n}(s_{n},t_{n})}>\epsilon$ for some subsequence. Pass to a further subsequence to ensure that $\ell_{n}t_{n}$ and $\pi_{n}$ converge to some $t_{\infty}\in \R/\Z$, $\pi_{\infty}\in P$, respectively. By the elliptic regularity result in \S\ref{sec:elliptic_regularity}, the rotated solutions:
\begin{equation*}
  w_{n}(s,t)=v_{n}(s,t+t_{n}),
\end{equation*}
have a subsequence which converges on compact sets to a \emph{non-constant} solution $w_{\infty}$ of the problem:
\begin{equation*}
  \left\{
    \begin{aligned}
      &w_{\infty}:[0,1]\times \R\to W,\\
      &\bd_{s}w_{\infty}+J_{t_{\infty}}(w_{\infty})\bd_{t}w_{\infty}=0,\\
      &\pr\circ w_{\infty}(0,t)=\mathfrak{B}(\pi_{\infty},t_{\infty}),\\
      &w_{\infty}(1,t)\in M.
    \end{aligned}
  \right.
\end{equation*}
Since $w_{\infty}$ is holomorphic and has boundary on Lagrangians on which the primitive $\lambda$ vanishes, $w_{\infty}$ must have zero energy and hence must be constant. This contradicts our assumption, and hence the derivatives $\abs{\bd_{s}v_{n}}$ must uniformly converge to zero.

This line of reasoning implies:
\begin{prop}\label{prop:small_ell}
  For $\epsilon>0$, there is $\ell_{0}=\ell_{0}(H_{t},J_{t},\mathfrak{B})$ so that every $(u,\pi,\ell)\in \mathscr{M}$ with $\ell\le \ell_{0}$ satisfies $\mathrm{dist}(u(s,t),\mathfrak{B}(\pi,t))<\epsilon.$
\end{prop}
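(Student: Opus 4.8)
The plan is to argue by contradiction, reusing the bubbling analysis carried out in the discussion just above the statement. Suppose the proposition fails for some $\epsilon>0$; then one gets a sequence $(u_{n},\pi_{n},\ell_{n})\in\mathscr{M}$ with $\ell_{n}\to 0$ together with points $(s_{n},t_{n})\in[0,\ell_{n}]\times\R/\Z$ for which $\mathrm{dist}(u_{n}(s_{n},t_{n}),\mathfrak{B}(\pi_{n},t_{n}))\ge\epsilon$, where $\mathfrak{B}(\pi_{n},t_{n})$ is regarded as a point of $T^{*}M$ via the zero section. I would pass to the conformal reparametrizations $v_{n}(s,t)=u_{n}(\ell_{n}s,\ell_{n}t)$ on $[0,1]\times\R/\ell_{n}^{-1}\Z$ exactly as in the preceding discussion. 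The a priori energy bound (valid since $H_{t}$ is sufficiently negative for $\mathfrak{B}$), the bubbling analysis of \S\ref{sec:apriori_first_derivative}, and the elliptic regularity of \S\ref{sec:elliptic_regularity} then give $\delta_{n}:=\sup\abs{\bd_{s}v_{n}}\to 0$ — this is precisely the conclusion established in the paragraph immediately before the statement — and, moreover, the $v_{n}$ stay within a fixed compact neighbourhood of the zero section.

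The remaining step is elementary bookkeeping with the triangle inequality. Writing $u_{n}(s,t)=v_{n}(s/\ell_{n},t/\ell_{n})$ with $s/\ell_{n}\in[0,1]$, integrating $\abs{\bd_{s}v_{n}}$ in the $s$-direction yields $\mathrm{dist}(u_{n}(s,t),v_{n}(0,t/\ell_{n}))\le\delta_{n}$ and $\mathrm{dist}(v_{n}(0,t/\ell_{n}),v_{n}(1,t/\ell_{n}))\le\delta_{n}$ for every $(s,t)$. Now $v_{n}(1,t/\ell_{n})$ lies on the zero section and $\pr\circ v_{n}(0,t/\ell_{n})=\mathfrak{B}(\pi_{n},t)$; since the $v_{n}$ are confined to a fixed compact neighbourhood of the zero section, on which $\pr$ is Lipschitz and restricts to the identity on $M$, one obtains $\mathrm{dist}(v_{n}(1,t/\ell_{n}),\mathfrak{B}(\pi_{n},t))\le C\delta_{n}$ with $C$ depending only on the metric. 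Combining the three estimates gives $\mathrm{dist}(u_{n}(s,t),\mathfrak{B}(\pi_{n},t))\le(2+C)\delta_{n}$ for all $(s,t)$, and evaluating at $(s_{n},t_{n})$ contradicts $\delta_{n}\to 0$.

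I do not expect a genuine obstacle here: the essential difficulty — precluding energy concentration and identifying the rescaled limit as a constant holomorphic strip with boundary on the exact Lagrangians $M$ and $\pr^{-1}(\mathfrak{B}(\pi_{\infty},t_{\infty}))$ — has already been dispatched above the statement. The only point needing a little care is notational: the distance in the conclusion is measured in $T^{*}M$ while $\mathfrak{B}$ takes values in $M$, so one must simultaneously control how far $u$ is from the zero section and how far its projection is from $\mathfrak{B}(\pi,t)$; the Lipschitz bound for $\pr$ near the compact zero section is what reconciles the two.
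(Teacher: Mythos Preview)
Your argument is correct and follows essentially the same route as the paper's proof: argue by contradiction, pass to the conformal reparametrizations $v_{n}$, invoke the already-established fact that $\sup\abs{\bd_{s}v_{n}}\to 0$, and then use that the left end of $v_{n}$ is a lift of $\mathfrak{B}(\pi_{n},-)$ while the right end lies on the zero section to force $v_{n}$ (equivalently $u_{n}$) uniformly close to $\mathfrak{B}$. The paper compresses your final triangle-inequality/Lipschitz bookkeeping into a single sentence, but the content is the same.
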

\begin{proof}
  Suppose not. Then there is a sequence $u_{n},\pi_{n},\ell_{n}$ with $\ell_{n}\to 0$ but which remains $\epsilon$ far from $\mathfrak{B}(\pi_{n},t)$. The conformal reparametrization $v_{n}(s,t)=u_{n}(\ell_{n}s,\ell_{n}t)$ then remains $\epsilon$ far from $\mathfrak{B}(\pi_{n},t\ell_{n})$ (in the uniform distance). As shown above, the derivatives of $v_{n}$ converge to zero. Since the left end of $v_{n}$ is a lift of $\mathfrak{B}(\pi_{n},t\ell_{n})$ and the right end lies on the zero section, the distance between $v_{n}(s,t)$ and $\mathfrak{B}(\pi_{n},t\ell_{n})$ must eventually be uniformly less than $\epsilon$, as desired.
\end{proof}

\subsubsection{Length diverging to infinity}
\label{sec:length_diverging}

Consider a sequence $(u_{n},\pi_{n},\ell_{n})$ with $\ell_{n}\to\infty$ and the reparametrizations $w_{n}(s,t)=v_{n}(s+\ell_{n},t)$ defined on $(-\ell_{n},0]\times\R/\Z$. The gradient bound and consequent maximum principle imply that $w_{n}$ converges on compact sets to a limit map $w:(-\infty,0]\times\R/\Z\to W$. The elliptic regularity results from \S\ref{sec:elliptic_regularity} imply the convergence is in $C^{\infty}$ on compact sets, the limit $w$ is smooth and satisfies:
\begin{equation*}
  \left\{
  \begin{aligned}
    &w:(-\infty,0]\times \R/\Z\to T^{*}M,\\
    &\bd_{s}w+J_{t}(w)(\bd_{t}w-X_{H_{t}}(w))=0,\\
    &w(0,t)\in M.
  \end{aligned}
  \right.
\end{equation*}
Convergence on compact sets implies:
\begin{equation*}
  \int\norm{\bd_{s}w}_{J_{t}}^{2}\d s\d t \le \limsup_{n\to\infty}\int\norm{\bd_{s}u_{n}}_{J_{t}}^{2}\d s\d t.
\end{equation*}
Since $\norm{\bd_{s}w}_{J_{t}}^{2}=\norm{\bd_{t}w-X_{H_{t}}(w)}_{J_{t}}^{2}$, a standard argument in Floer theory implies that, for every sequence $s_{n}\to-\infty$, there is a subsequence so $w(s_{n},-)$ converges to a 1-periodic orbit for $X_{H_{t}}$. This is sufficient for the results in \S\ref{sec:existence_orbits} and \S\ref{sec:relative_gromov_width}.

\section{Transversality, index theory, and gluing}
\label{sec:transversality_gluing}
The goals of this section are to prove the moduli space $\mathscr{M}$ is a manifold whose dimension equals $\dim P+1$ (transversality and index theory), and to give an explicit description of the moduli space when the length parameter $\ell$ tends to zero (gluing).

\subsection{Transversality}
\label{sec:transversality}
The arguments in this section are standard. Experts can safely skip to \S\ref{sec:index_theory}.

\subsubsection{Injectivity results for Floer cylinders}
\label{sec:injectivity_results}

It is well-known that moduli spaces of holomorphic maps will be transverse for generic domain independent data near sufficiently injective maps. At the other extreme, one can achieve transversality without any injectivity assumptions if one allows domain dependent data. The Floer equation \eqref{eq:floer_cylinder} is only partially domain dependent, as the data is $s$-independent.

Fix $u:[a,b]\times \R/\Z\to W$ solving Floer's equation \eqref{eq:floer_cylinder} with $-\infty<a<b<\infty$. Following \cite{floer_hofer_salamon_transversality}, let us say that a point $(s_{0},t_{0})$ is \emph{regular} provided that (i) $\bd_{s}u(s_{0},t_{0})\ne 0$, and (ii) $u(s,t_{0})=u(s_{0},t_{0})$ if and only if $s=s_{0}$. Roughly speaking, regular points are ``injective points'' for the slices $t=t_{0}$.

\begin{prop}
  Suppose that $\bd_{s}u$ is not identically zero. Then the set of regular points is open and dense.
\end{prop}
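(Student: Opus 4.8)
The plan is to follow the classical argument of Floer--Hofer--Salamon \cite{floer_hofer_salamon_transversality} for the somatic injectivity of solutions to Floer's equation, adapted to our finite cylinder $[a,b]\times\R/\Z$. First I would establish openness: if $(s_0,t_0)$ is regular, then $\bd_s u\neq 0$ near $(s_0,t_0)$ by continuity, so condition (i) is open; for condition (ii), suppose it failed in every neighborhood, giving sequences $(s_n,t_n)\to(s_0,t_0)$ and $s_n'\neq s_n$ with $u(s_n',t_n)=u(s_n,t_n)$. A subsequence of $s_n'$ either converges to some $s_\infty\neq s_0$ (contradicting (ii) at $(s_0,t_0)$ after taking limits, since $u(s_\infty,t_0)=u(s_0,t_0)$) or $s_n'\to s_0$; in the latter case divide by $s_n'-s_n$ and pass to the limit to get $\bd_s u(s_0,t_0)=0$, contradicting (i). Note one must be slightly careful that $s_n'$ stays in $[a,b]$, but this is automatic.

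The substance is density. Let $R\subset[a,b]\times\R/\Z$ be the (open) set of regular points and suppose its complement has nonempty interior; I want a contradiction with the hypothesis that $\bd_s u$ is not identically zero. The standard strategy is to first control the non-injective behavior. Let $Z=\set{(s,t):\bd_s u(s,t)=0}$; since $u$ solves a Floer equation, a Carleman-type unique continuation / similarity principle argument shows that either $\bd_s u\equiv 0$ (excluded) or $Z$ has empty interior — in fact the components of $\bd_s u$ satisfy a first-order elliptic system, so $Z$ is nowhere dense (indeed its structure is that of the zero set of a $J$-holomorphic-type section). Thus outside a nowhere dense set condition (i) holds, and it remains to understand the failure of (ii): points $(s_0,t_0)$ with $\bd_s u(s_0,t_0)\neq 0$ but $u(s_1,t_0)=u(s_0,t_0)$ for some $s_1\neq s_0$. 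The claim is that the set of such ``bad'' $(s_0,t_0)$ is meager (a countable union of nowhere dense sets), so that $R$, being the complement of a nowhere dense set and a meager set within an open dense set, is itself dense.

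To see the bad set is meager, fix $\delta>0$ and consider $B_\delta=\set{(s_0,t_0):\bd_s u(s_0,t_0)\neq 0,\ \exists\, s_1\text{ with }|s_1-s_0|\ge\delta,\ u(s_1,t_0)=u(s_0,t_0)}$; then the bad set is $\bigcup_{k}B_{1/k}$ union the set where non-injectivity accumulates at $s_0$ itself (which forces $\bd_s u=0$ by the argument in the openness step, so lies in $Z$). Suppose $B_\delta$ contained an open set $U\times I$. For fixed $t\in I$, the slice map $s\mapsto u(s,t)$ would then be a curve in $W$ that, for an open set of $s\in U$, revisits its own value at a point at distance $\ge\delta$. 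One then runs the following: the curve $s\mapsto u(s,t_0)$ satisfies the ODE $\bd_s u = -J_t(u)(\bd_t u - X_t(u))$, but $\bd_t u$ is not determined by $u(\cdot,t_0)$ alone, so one instead works with the two parameters. Using that $u$ is $J$-holomorphic in the appropriate sense and applying the Aronszajn unique continuation theorem to the difference $u(\cdot+s_1-s_0,\cdot)-u(\cdot,\cdot)$ (near an injective-type point) shows the coincidence set of the two ``shifted'' maps is either everything — forcing $u$ to be $(s_1-s_0)$-periodic in $s$, which on a finite cylinder combined with $\bd_s u\not\equiv 0$ can be excluded, or re-derived to lead to a contradiction with the boundary condition $u(b,t)\in M$ unless energy vanishes — or a proper real-analytic-type subvariety of positive codimension, hence nowhere dense. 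This contradicts $B_\delta\supset U\times I$. Therefore each $B_{1/k}$ is nowhere dense, the bad set is meager, $Z$ is nowhere dense, and $R$ is dense by Baire.

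The main obstacle I anticipate is the density step, specifically ruling out that an entire open set consists of non-injective points: the clean version of the argument requires a unique continuation statement for the difference of two solutions of the (shifted) Floer equation that do not solve the \emph{same} equation in an obvious way, because the Hamiltonian term $X_t$ and the complex structure $J_t$ are $t$-dependent but $s$-independent — this $s$-independence is exactly what makes the argument work (the shift $s\mapsto s+c$ is a symmetry), and the care needed is in handling points where $\bd_s u$ vanishes and in the interaction with the finite domain $[a,b]$ and the zero-section boundary condition. I would expect the authors to invoke \cite{floer_hofer_salamon_transversality} and \cite{mcduffsalamon} for the unique continuation input and keep the argument terse.
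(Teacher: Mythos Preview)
Your openness argument and the overall architecture of the density argument are correct and match the paper: both use the similarity principle to reduce to points where $\bd_s u\neq 0$, exploit the $s$-translation symmetry of Floer's equation, and finish with an action/energy identity to rule out $s$-periodicity. The final contradiction you sketch (periodicity forces zero energy on a strip, contradicting $\bd_s u\not\equiv 0$) is exactly what the paper does.

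However, there is a genuine gap at the heart of your density step. You write ``apply Aronszajn unique continuation to the difference $u(\cdot+s_1-s_0,\cdot)-u(\cdot,\cdot)$,'' but the shift $c=s_1-s_0$ is, at this stage, only defined pointwise: for each $(s_0,t_0)\in B_\delta$ you are handed \emph{some} $s_1$, and nothing in your setup forces the shift to be locally constant (or even continuous). Your Baire decomposition into the sets $B_\delta$ controls the \emph{size} of the shift but not its value, so it does not help here. Without a fixed $c$, there is no single equation to which unique continuation applies. Relatedly, the dichotomy ``coincidence set is everything or a proper real-analytic-type subvariety'' is not available: the data are only smooth, so you only get the unique-continuation direction (nonempty interior implies everything), not a structure theorem for the complement.

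The paper closes this gap by the covering argument of \cite{floer_hofer_salamon_transversality}: near a non-regular $z_0$ with $\bd_s u(z_0)\neq 0$, one first perturbs $z_0$ to avoid the finitely many critical points of $\bd_s u$ and the boundary slices, and then constructs a smooth local map $\phi(s,t)=(f(s,t),t)$ with $\phi\neq\mathrm{id}$ and $u\circ\phi=u$. Plugging $u\circ\phi=u$ into the Floer equation and using that $\bd_s u,\,J\bd_s u$ are linearly independent forces $\bd_s f=1$, $\bd_t f=0$, i.e.\ $\phi$ is a \emph{constant} $s$-translation. Only then does unique continuation propagate $u(s+s',t)=u(s,t)$ to the whole circle, and the action argument finishes. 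This explicit computation, rather than an abstract unique continuation for a difference, is the missing idea; once you have it, the Baire decomposition becomes unnecessary and one can argue pointwise as the paper does.
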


\begin{proof}
  The argument follows \cite[Theorem 4.3]{floer_hofer_salamon_transversality}. That the regular points are open is straightforward to show, and is left to the reader.

  One shows $\xi=\bd_{s}u$ solves an equation of the form: $$\bd_{s}\xi+J(z)\bd_{t}\xi+A(z)\xi=0.$$ Then the similarity principle, see \cite[Theorem 2.2]{floer_hofer_salamon_transversality} or \cite[Theorem 2.32]{wendl-sft}, implies that $\xi=\bd_{s}u$ is either identically zero or has isolated zeroes. Since we assume $\bd_{s}u$ is not identically zero, the set of points where (i) holds is open and dense.

  Next, suppose that $z_{0}=(s_{0},t_{0})$ satisfies (i) and $s_{0}\in (a,b)$, but $z_{0}$ cannot be approximated by points which satisfy (ii). Let $P_{-}(z_{0})=(a,t_{0})$ and $P_{+}(z_{0})=(b,t_{0})$. By slightly shifting $z_{0}$, we may suppose that the $\delta$-square around $z_{0}$, denoted $Q_{\delta}(z_{0})$, satisfies $u(Q_{\delta}(z_{0}))\cap u(Q_{\delta}(P_{\pm}(z_{0})))=\emptyset$ for $\delta$ sufficiently small (this uses that $\bd_{s}u(z_{0})\ne 0$). Roughly speaking, $u(z_{0})$ remains away from the pieces of the left and right boundary circles with the same $t$ coordinate.

  By the argument in the first paragraph, there are only finitely many points $z$ in the rectangle $[a+\delta,b-\delta]\times [t_{0}-\delta,t_{0}+\delta]$ with $\bd_{s}u(z)=0$. Abbreviate $U(z)=(t(z),u(z))$. Shifting $z_{0}$ and shrinking $\delta$, we may further suppose that:
  \begin{equation*}
    z_{1}\in Q_{\delta}(z_{0})\text{ and }U(z_{2})=U(z_{1})\implies \bd_{s}u(z_{2})\ne 0.
  \end{equation*}
  By the covering argument given on \cite[pp.\ 262]{floer_hofer_salamon_transversality}, one can fix $\epsilon$ and construct: $$\phi:Q_{\delta}(z_{0})\to (a,b)\times (t_{0}-\delta,t_{0}+\delta)$$ so that $\phi(z)\ne z$, $U(\phi(z))=U(z)$, and so the diameter of $\phi$ is smaller than $\epsilon$ (after shifting $z_{0}$ slightly and shrinking $\delta$ small enough). Morally, the fact that we cannot make $U$ injective near $z_{0}$ is used to construct $\phi$.

  Then $u(\phi(z))=u(z)$ for $\phi(s,t)=(f(s,t),t)$. Since the $s$-derivative of $u$ is non-zero on the image of $\phi$, and $\phi$ has a small diameter, $f(s,t)$ is smooth (appealing to the constant rank version of the inverse function theorem). Invoking the Floer equation yields:
  \begin{equation*}
    \bd_{s}u(\phi)\cdot \bd_{s}f+J(u(\phi))\bd_{s}u(\phi)\cdot \bd_{t}f= -J(u(\phi))(\bd_{t}u(\phi)-X_{t}(u(\phi))).
  \end{equation*}
  Since $\bd_{s}u$ and $J(u)\bd_{s}u$ are linearly independent, there is a unique solution $\bd_{s}f=1$ and $\bd_{t}f=0$. Thus we conclude $u(s+s',t)=u(s,t)$ for some $s'\ne 0$. This holds for $s,t\in Q_{\delta}(z_{0})$, but it extends to the entire circle $\set{s_{0}}\times \R/\Z$ by unique continuation.

  Finally, the fact that the actions of the two loops $u(s_{0},-)$ and $u(s_{0}+s',-)$ are the same implies that the energy of $u$ restricted to the region $[s_{0},s_{0}+s']\times \R/\Z$ vanishes. This contradicts the fact that $\bd_{s}u$ has isolated zeroes, and this completes the proof.
\end{proof}

\subsubsection{Linearization framework and the Sard-Smale theorem}
\label{sec:linearization_framework}

In order to prove the moduli space has the structure of a finite dimensional manifold, one embeds it into an appropriate Banach manifold and applies the implicit function theorem.

As in \S\ref{sec:definition_of_moduli_space}, fix a smooth family $\mathfrak{B}(\pi,t)$ of loops in $M$, $\pi\in P$, $t\in \R/\Z$, and let $W=T^{*}M$. Fix some $p>2$.

Let $\mathscr{W}^{1,p}$ be the Banach manifold of triples $(u,\pi,\ell)$ where $\ell>0$,
\begin{equation*}
  \left\{
    \begin{aligned}
      &u:[0,\ell]\times\R/\Z\to W\text{ is of class }W^{1,p},\\
      &\pr\circ u(0,t)=\mathfrak{B}(\pi,t),\\
      &u(\ell,t)\in M.
    \end{aligned}
  \right.
\end{equation*}
One thinks of this as a total space of a bundle $\mathscr{W}^{1,p}\to P\times (0,\infty)$. It is straightforward exercise in nonlinear analysis to locally trivialize this bundle and identify it with an open subset of a Banach space. The Banach manifold $\mathscr{W}^{1,p}$ is separable, in the sense that it can be covered by countably many open sets homeomorphic to open subsets of separable Banach spaces. This can be proved by observing that $\mathscr{W}^{1,p}$ includes continuously into a $C^{0}$ space, and $C^{0}$ small neighborhoods are identified with open subsets of genuine $W^{1,p}$ vector spaces (using exponential type maps). Since $C^{0}$ spaces have a countable dense set of smooth elements, and $W^{1,p}$ vector spaces are separable, one concludes that $\mathscr{W}^{1,p}$ is separable.

Fix smooth data $H_{t},J_{t}$ admissible for a star-shaped domain $\Omega$. Let $\mathscr{H}$ be a Banach manifold of smooth Hamiltonians $H_{t}'$ which agree with $H_{t}$ on the complement of $\Omega$. We require $\mathscr{H}$ to be sufficiently large so that the image of the linearization of $H_{t}'\mapsto X_{H_{t}'}$ contains sections taking any chosen value at $p\in \Omega^{\mathrm{int}}$, supported in arbitrarily small balls around $p$. A good choice is Floer's $C^{\epsilon}$ space introduced in \cite[\S5]{floer-lag}, see also \cite[\S B]{wendl-sft}. One lets $\mathscr{H}$ be the space of $H_{t}'$ agreeing with $H_{t}$ on $\Omega^{c}$ and so that:
\begin{equation*}
  \sum\epsilon_{k}\norm{\nabla^{k}(H_{t}'-H_{t})}<\infty,
\end{equation*}
for a sequence $\epsilon_{k}>0$ decaying sufficiently quickly, where the norms are measured using any complete metric on $W$. This is separable, see \cite[Lemma B.4]{wendl-sft}.

Let $\mathscr{L}^{p}\to \mathscr{W}^{1,p}\times\mathscr{H}$ be the Banach manifold bundle whose fiber over $(u,\pi,\ell,H_{t}')$ consists of $L^{p}$ sections of $u^{*}TW$.

The nonlinear problem is described by the section $\mathscr{S}:\mathscr{W}^{1,p}\times \mathscr{H}\to \mathscr{L}^{p}$ given by:
\begin{equation*}
  \mathscr{S}(u,\pi,\ell,H_{t}')=\bd_{s}u+J_{t}(u)(\bd_{t}u-X_{t}'(u)),
\end{equation*}
where $\d\lambda(X_{t}',-)=-\d H'_{t}$. If $\mathscr{S}(u,\pi,\ell,H_{t}')=0$, then the linearized operator is well-defined, and is described as a map:
\begin{equation*}
  \mathrm{L}\mathscr{S}:T\mathscr{W}^{1,p}_{u,\pi,\ell}\oplus T\mathscr{H}_{H_{t}'}\to \mathscr{L}^{p}_{u}.
\end{equation*}

\begin{prop}
  The linearized operator $\mathrm{L}\mathscr{S}$ is surjective at every point $(u,\pi,\ell,H_{t}')$ in $\mathscr{S}^{-1}(0)$ (moreover, the linearized operator has a right inverse).
\end{prop}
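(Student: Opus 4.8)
The plan is to prove surjectivity of $\mathrm{L}\mathscr{S}$ by the standard Floer-theoretic duality argument: a section $\eta\in\mathscr{L}^{p}_{u}$ (really, its $L^{q}$-dual, $1/p+1/q=1$) which annihilates the image of $\mathrm{L}\mathscr{S}$ must vanish identically. Concretely, suppose $\eta\in L^{q}(u^{*}TW)$ satisfies $\langle\eta,\mathrm{L}\mathscr{S}(\xi,v,\hat\ell,\hat H)\rangle_{L^{2}}=0$ for all variations $(\xi,v,\hat\ell,\hat H)$ in $T\mathscr{W}^{1,p}_{u,\pi,\ell}\oplus T\mathscr{H}_{H_{t}'}$. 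The argument has three parts: (1) restricting to variations $\xi$ supported in the interior with $v=0$, $\hat\ell=0$, $\hat H=0$ shows $\eta$ is a weak (hence, by elliptic regularity applied to the formal adjoint, strong and smooth) solution of the linearized equation $\mathrm{L}\mathscr{S}^{*}\eta=0$ on $(0,\ell)\times\R/\Z$; (2) using the Hamiltonian variations $\hat H\in T\mathscr{H}_{H_{t}'}$ one forces $\eta$ to vanish on a dense set; (3) the similarity principle then propagates this vanishing to all of the domain, and finally one checks the boundary contributions to conclude.

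For step (1), the formal adjoint of the linearized Cauchy–Riemann operator is again a Cauchy–Riemann type operator (up to zeroth-order terms and conjugation by $J$), so $\eta$ satisfies an equation of the form $\bd_{s}\eta-J(z)\bd_{t}\eta+B(z)\eta=0$ in the weak sense; interior elliptic regularity (the bootstrapping result of \S\ref{sec:elliptic_regularity}) makes $\eta$ smooth on the open cylinder. For step (2), fix a regular point $z_{0}=(s_{0},t_{0})$ in the sense of \S\ref{sec:injectivity_results}, which exist densely since $\bd_{s}u\not\equiv 0$ (this is where we need $u$ non-constant; if $\bd_{s}u\equiv 0$ then $E(u)=0$ and $u$ is a constant loop on $M$, a case that can be treated separately or excluded by the a priori setup). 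By definition of $\mathscr{H}$, the linearization $\hat H\mapsto -J_{t}(u)X_{\hat H_{t}}(u)$ of the Hamiltonian term realizes arbitrary values near $z_{0}$, localized in arbitrarily small balls; pairing against such variations forces $\eta(z_{0})=0$. Since regular points are dense, $\eta$ vanishes on a dense subset of the interior.

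For step (3), the similarity principle (\cite[Theorem 2.2]{floer_hofer_salamon_transversality}, \cite[Theorem 2.32]{wendl-sft}) applied to the first-order equation satisfied by $\eta$ shows that $\eta$ either vanishes identically or has isolated zeroes; density of its zero set forces $\eta\equiv 0$ on the interior, hence everywhere by continuity. This proves $\mathrm{L}\mathscr{S}$ has dense range; since it is Fredholm (the underlying linearized Cauchy–Riemann operator with the given totally real/Lagrangian boundary conditions at $s=0$ and $s=\ell$ is Fredholm by standard theory), dense range implies surjectivity, and a Fredholm surjection onto a Banach space admits a bounded right inverse. The main obstacle I anticipate is step (2): one must verify carefully that the boundary constraints ($\pr\circ u(0,t)=\mathfrak{B}(\pi,t)$ on the left, $u(\ell,t)\in M$ on the right) do not obstruct the pairing argument — i.e., that the regular points used lie in the open interior $(0,\ell)\times\R/\Z$ and that the $\hat H$-variations, supported away from the boundary, are genuinely admissible tangent vectors to $\mathscr{H}$ (this is guaranteed since $\mathscr{H}$ perturbs only inside $\Omega$ and the interior regular points can be chosen in $\Omega^{\mathrm{int}}$, using that a Floer cylinder with $\bd_{s}u\not\equiv 0$ cannot lie entirely outside $\Omega$ by admissibility and the maximum principle). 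Also one should confirm that $\bd_{s}u\not\equiv 0$ for the relevant $u$: if it were, $u$ would be $s$-independent and solve $\bd_{t}u=X_{t}(u)$ with $u(\ell,t)\in M$, i.e. a $1$-periodic orbit on the zero section, which is ruled out for generic admissible data or handled directly.
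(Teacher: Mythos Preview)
Your proposal is correct and follows essentially the same approach as the paper: both represent the cokernel by solutions $\eta$ of the adjoint Cauchy--Riemann operator, use Hamiltonian variations at regular points in $u^{-1}(\Omega^{\mathrm{int}})$ to force $\eta$ to vanish on a dense set, and then invoke unique continuation (similarity principle) together with the Fredholm property of $D$. Your additional remarks on the $\bd_{s}u\equiv 0$ case and on why regular points can be found in $u^{-1}(\Omega^{\mathrm{int}})$ are useful clarifications that the paper leaves implicit.
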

\begin{proof}
  The argument is standard, see \cite[\S 8.3]{mcduffsalamon}, \cite[\S 5]{floer_hofer_salamon_transversality}. The restriction of $\mathrm{L}\mathscr{S}$ to the space of variations fixing $(\pi,\ell,H_{t}')$ is a Cauchy-Riemann operator $D$ on the cylinder $[0,\ell]\times \R/\Z$ with totally real boundary conditions. Since $u$ has many regular points $z_{0}$ in $u^{-1}(\Omega)$, and $\mathscr{H}$ is sufficiently large, the restriction of the linearized operator $\mathrm{L}\mathscr{S}$ to variations in $H_{t}'$ is transverse to the cokernel of any Cauchy-Riemann operator. Briefly, the reason is that the cokernel of $D$ is represented by the orthogonal complement to the image of $D$. Duality theory for partial differential operators implies this orthogonal complement is the kernel of an adjoint Cauchy-Riemann operator $D^{*}$. If the full linearization was not surjective, then we would conclude nonzero elements $\eta\in \ker D^{*}$ orthogonal to the linearization of $\mathscr{S}$ in directions tangent to $\mathscr{H}$. One then shows that $\eta$ would need to vanish at every regular point of $u$ in $u^{-1}(\Omega)$. Applying analytic continuation to $\eta$ implies that $\eta$ is identically zero, contradicting our assumption. That the operator has a right inverse follows from the fact $D$ is Fredholm.
\end{proof}

We want a surjective linearized operator for fixed $H_{t}$. The following result provides a way to achieve this.
\begin{lemma}
  Let $E_{i}$, $i=1,2,3$, be separable Banach spaces, and $F:E_{1}\times E_{2}\to E_{3}$ is a smooth map whose linearization has a right inverse at all points in $F^{-1}(0)$. Then $F^{-1}(0)$ is a smooth Banach manifold. Pick $(e_{1},e_{2})$ and let $G_{1}$ be the restriction of $F$ to $E_{1}\times \set{e_{2}}$ and $G_{2}$ the restriction of $\pr:E_{1}\times E_{2}\to E_{2}$ to $F^{-1}(0)$. Then the linearization of $G_{1}$ at $(e_{1},e_{2})$ is Fredholm if and only if the linearization of $G_{2}$ at $(e_{1},e_{2})$ is. In this case, the cokernel of the linearization of $G_{1}$ is canonically identified with the cokernel of the linearization of $G_{2}$.
  As a consequence, if $e_{2}$ is a regular value of $G_{2}$, then the linearization of $G_{1}$ is surjective.
\end{lemma}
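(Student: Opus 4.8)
The plan is to exploit the fact that $F^{-1}(0)$ is a smooth Banach manifold (from the right-inverse hypothesis and the implicit function theorem) and to compare the two linearizations via a short exact sequence of Banach spaces. Write $DF = (D_1 F, D_2 F) : E_1 \times E_2 \to E_3$ for the linearization of $F$ at $(e_1,e_2)$, so that $D_1 F$ is the linearization of $G_1$. The tangent space $T_{(e_1,e_2)}F^{-1}(0)$ is $\ker DF$, and since $DF$ has a right inverse, the projection $DG_2 : \ker DF \to E_2$ is just the restriction to $\ker DF$ of the second coordinate projection $E_1 \times E_2 \to E_2$. The first observation is that $\ker D_1 F = \ker DF \cap (E_1 \times \set 0) = \ker DG_2$, since an element $(v_1,0)\in\ker DF$ is precisely a $v_1$ with $D_1F\, v_1 = 0$ lying in $\ker DG_2$, and conversely. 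So the two operators have the same kernel, which is finite-dimensional iff either index condition holds.

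Next I would identify the cokernels. Since $DF$ is surjective (it has a right inverse), $E_3 = D_1F(E_1) + D_2F(E_2)$. I claim there is a natural isomorphism $\operatorname{coker} D_1F \cong \operatorname{coker} DG_2$. Consider the map $E_2 \to E_3 / \overline{\operatorname{im} D_1F}$ (or just $E_3/\operatorname{im} D_1F$ once we know it is closed) sending $v_2 \mapsto [D_2F\, v_2]$; it is surjective because $DF$ is surjective. Its kernel is $\set{v_2 : D_2F\, v_2 \in \operatorname{im} D_1F} = \set{v_2 : \exists v_1,\ D_1F\, v_1 + D_2F\, v_2 = 0} = \set{v_2 : \exists v_1,\ (v_1,v_2)\in \ker DF}$, which is exactly $\operatorname{im} DG_2 \subseteq E_2$. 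Hence $E_3/\operatorname{im} D_1F \cong E_2/\operatorname{im} DG_2 = \operatorname{coker} DG_2$. In particular $\operatorname{im} D_1F$ is closed iff $\operatorname{im} DG_2$ is closed, and the two cokernels are canonically isomorphic. Combined with the kernel statement, $D_1F$ (the linearization of $G_1$) is Fredholm iff $DG_2$ is, with the claimed identification of cokernels; and since $\ker DF$ has a topological complement in $E_1\times E_2$ (again using the right inverse), $DG_2$ genuinely has closed image with finite-dimensional cokernel exactly when the index data is finite, so no completion subtleties arise.

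The consequence is then immediate: if $e_2$ is a regular value of $G_2$, then $DG_2 : \ker DF \to E_2$ is surjective, so $\operatorname{coker} DG_2 = 0$, hence $\operatorname{coker} D_1F = 0$, i.e.\ the linearization of $G_1$ at $(e_1,e_2)$ is surjective. I expect the only mildly delicate point to be checking that $\operatorname{im} D_1F$ is closed purely from the hypotheses — but this follows because the right inverse of $DF$ splits $E_1\times E_2 = \ker DF \oplus R$ with $DF|_R$ an isomorphism onto $E_3$, so modulo this splitting everything reduces to finite-dimensional linear algebra applied to $DG_2 : \ker DF \to E_2$; once $DG_2$ has closed image, pulling back through the splitting shows $\operatorname{im} D_1F$ is closed as well. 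The rest is the routine bookkeeping indicated above.
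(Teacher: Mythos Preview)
Your proposal is correct and carries out precisely the ``straightforward analysis of Fredholm operators and some linear algebra'' that the paper invokes; the paper itself gives no argument beyond citing \cite[\S 8.1]{wendl-sft} and \cite[Lemma A.3.6]{mcduffsalamon}, and your kernel/cokernel identifications via the short exact sequence are the standard content of those references. The only place to tighten is the closedness step: your phrase ``pulling back through the splitting'' can be made precise by noting that $E_1\times \operatorname{im}DG_2$ is a closed subspace containing $\ker DF$, hence equals $\ker DF\oplus\bigl(R\cap(E_1\times\operatorname{im}DG_2)\bigr)$, and $DF|_R$ carries the second summand isomorphically onto $\operatorname{im}D_1F$.
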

\begin{proof}
  The argument is a straightforward analysis of Fredholm operators and some linear algebra. See \cite[\S 8.1]{wendl-sft}, \cite[Lemma A.3.6]{mcduffsalamon}.
\end{proof}

Applying the above result (locally) with $E_{1}=\mathscr{W}^{1,p}$, $E_{2}=\mathscr{H}$ and $E_{3}=\mathscr{L}^{p}$, one concludes that, if $H_{t}'$ is a regular value for the projection $\mathscr{S}^{-1}(0)\to \mathscr{H}$, then the linearization of $\mathscr{S}$ restricted to $\mathscr{W}^{1,p}\times \set{H_{t}'}$ will be surjective. The final ingredient is the Sard-Smale theorem which guarantees a Baire generic subset of regular values of the projection $\mathscr{S}^{-1}(0)\to \mathscr{H}$ (since it is a smooth map with Fredholm linearization between separable Banach manifolds), see \cite[\S A.5]{mcduffsalamon}. Thus we conclude:
\begin{prop}
  For any admissible data $J_{t},H_{t}$, there exist arbitrarily small perturbations $H_{t}'$ agreeing with $H_{t}$ outside $\Omega$ so that the moduli space $\mathscr{M}(J_{t},H_{t}')$ is a smooth manifold whose dimension equals the Fredholm index of the linearization of:
  \begin{equation*}
    (u,\pi,\ell)\in\mathscr{W}^{1,p}\mapsto \bd_{s}u+J_{t}(u)(\bd_{t}u-X_{t}(u)),
  \end{equation*}
  and this linearization is surjective.
\end{prop}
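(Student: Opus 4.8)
The plan is to run the standard Sard--Smale scheme, using as black boxes the surjectivity-with-right-inverse of $\mathrm{L}\mathscr{S}$ along $\mathscr{S}^{-1}(0)$ (the previous proposition), the abstract Fredholm lemma, and the elliptic bootstrapping of \S\ref{sec:elliptic_regularity}. First, since $\mathrm{L}\mathscr{S}$ has a right inverse at every point of $\mathscr{S}^{-1}(0)$, the implicit function theorem exhibits the universal moduli space $\widetilde{\mathscr{M}}:=\mathscr{S}^{-1}(0)\subset\mathscr{W}^{1,p}\times\mathscr{H}$ as a smooth Banach submanifold, separable because $\mathscr{W}^{1,p}$ and $\mathscr{H}$ are.

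Second, I would show the projection $\Pi:\widetilde{\mathscr{M}}\to\mathscr{H}$, $(u,\pi,\ell,H_{t}')\mapsto H_{t}'$, is a Fredholm map. Apply the abstract lemma with $E_{1}=\mathscr{W}^{1,p}$, $E_{2}=\mathscr{H}$, $E_{3}=\mathscr{L}^{p}$, $G_{1}=\mathscr{S}(-,H_{t}')$ and $G_{2}=\Pi$. The linearization of $G_{1}$ is a Cauchy--Riemann operator $D$ on the compact cylinder $[0,\ell]\times\R/\Z$ with totally real boundary conditions (the tangent spaces of $\pr^{-1}(\mathfrak{B}(\pi,-))$ along $s=0$ and of $M$ along $s=\ell$), together with the finite-rank variations of $\pi$ and $\ell$; such an operator is Fredholm by the standard elliptic theory of Cauchy--Riemann operators on a compact surface with boundary. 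The lemma then gives that $\mathrm{L}\Pi$ is Fredholm with cokernel canonically identified with $\mathrm{coker}\,D$, so $\Pi$ is Fredholm between separable Banach manifolds.

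Third, the Sard--Smale theorem gives a residual, hence dense, set of regular values of $\Pi$ in $\mathscr{H}$; choosing one yields $H_{t}'$ agreeing with $H_{t}$ outside $\Omega$ and arbitrarily $C^{\epsilon}$-close to $H_{t}$. For such $H_{t}'$ the lemma shows the linearization of $G_{1}=\mathscr{S}(-,H_{t}')$ is surjective at every zero, so the implicit function theorem makes $\mathscr{S}(-,H_{t}')^{-1}(0)$ a smooth manifold of dimension $\mathrm{ind}\,D$. Finally, the elliptic regularity of \S\ref{sec:elliptic_regularity} upgrades every $W^{1,p}$ solution to a smooth one, so $\mathscr{S}(-,H_{t}')^{-1}(0)=\mathscr{M}(J_{t},H_{t}')$ carries the asserted smooth structure, and $\mathrm{ind}\,D$ is by definition the Fredholm index of the displayed linearization (this index is computed in \S\ref{sec:index_theory} to equal $\dim P+1$).

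I do not expect a genuinely new difficulty; the analytic content is entirely in the two preceding propositions and in \S\ref{sec:elliptic_regularity}. The point needing the most care is that $\mathscr{H}$ is supported in $\Omega$, so transversality there requires each solution $u$ to have regular points inside $u^{-1}(\Omega^{\mathrm{int}})$; this is automatic because a neighbourhood of $\set{s=\ell}$ maps into the zero section $M\subset\Omega^{\mathrm{int}}$ and regular points are dense as soon as $\partial_{s}u\not\equiv0$. The excluded case $\partial_{s}u\equiv0$ corresponds to $X_{H_{t}}$-orbits lying on the zero section, a non-generic configuration that can be removed by an additional arbitrarily small perturbation of $H_{t}$ near $M$.
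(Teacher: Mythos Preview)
Your proposal is correct and follows essentially the same Sard--Smale scheme as the paper: use the surjectivity-with-right-inverse proposition to make $\mathscr{S}^{-1}(0)$ a Banach manifold, apply the abstract Fredholm lemma with $E_{1}=\mathscr{W}^{1,p}$, $E_{2}=\mathscr{H}$, $E_{3}=\mathscr{L}^{p}$ to identify cokernels, then invoke Sard--Smale on the projection to $\mathscr{H}$. Your explicit appeal to elliptic regularity to upgrade $W^{1,p}$ zeroes to smooth ones, and your remark on the degenerate case $\partial_{s}u\equiv 0$ (which the paper leaves implicit in the previous proposition), are welcome clarifications rather than departures from the paper's argument.
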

To complete the proof of the dimension part of Theorem \ref{theorem:main}, it remains to prove that this Fredholm index equals $\dim P+1$. This is the subject of the next section.

\subsection{Index theory}
\label{sec:index_theory}
As a first reduction, observe that the tangent space to $\mathscr{W}^{1,p}$ splits into the infinite dimensional piece consisting of variations which fix $\pi$ and $\ell$ and a finite dimensional complement of dimension $\dim P+1$. Thus it suffices to prove that the linearization restricted to variations fixing $\pi$ and $\ell$ has Fredholm index $0$.

\subsubsection{Travelling coordinates}
\label{sec:travelling_coordinates}

Fix $(u,\pi,\ell)\in \mathscr{M}(J_{t},H_{t})$. It is important to identify a neighborhood of $u$ with a linear space. A convenient way to do this is via \emph{travelling coordinates}.

Abbreviate $\Sigma=[0,\ell]\times \R$. For each $z\in \Sigma$, choose an open embedding: $$\varphi_{z}:B(1)^{n}\to M$$ so that $\pr\circ u(z)=\varphi_{z}(0)$. This can be constructed using Riemannian normal coordinates. Moreover, if $\mathfrak{B}(\pi,t)$ is orientable, we can arrange that $\varphi_{(s,t+1)}=\varphi_{(s,t)}$, while, if $\mathfrak{B}(\pi,t)$ is non-orientable, we can arrange that $\varphi_{(s,t+1)}=\varphi_{(s,t)}\rho$ where $\rho$ is a linear reflection. Let us be general and suppose that $\varphi_{(s,t+1)}=\varphi_{(s,t)}\rho$ for some orthogonal linear map $\rho$.

The maps $\varphi_{z}$ induce canonical embeddings: $$\Phi_{z}:B(1)\times \R^{n}\to T^{*}M\text{ so }\Phi_{z}^{*}\lambda=\sum p_{i}\d q_{i}.$$
There is a smooth map $w$ so that $u(z)=\Phi_{z}(w(z))$. The Floer equation for $u$ implies that $w$ satisfies the problem:
\begin{equation}\label{eq:w_eq}
  \left\{
    \begin{aligned}
      &\bd_{s}w+J_{z}(w)\bd_{t}w=A_{z}(w)\\
      &w(0,t)\in 0\times \R^{n}\\
      &w(\ell,t)\in B(1)\times 0\\
      &Rw(s,t+1)=w(s,t).
    \end{aligned}
  \right.
\end{equation}
Here $R$ is the linear symplectic extension of the orthogonal transformation $\rho$. This produces a linearized operator:
\begin{equation*}
  D(\eta)=\bd_{s}\eta+J(z)\bd_{t}\eta+B(z)\eta,
\end{equation*}
for $\eta\in W^{1,p}$ satisfying $\eta(0,t)\in 0\times \R^{n}$, $\eta(\ell,t)\in \R^{n}\times 0$ and $R\eta(s,t+1)=\eta(s,t)$.

It is standard that all such operators $D$ are Fredholm, see, e.g., \cite{mcduffsalamon,cant_thesis}. Since $J(z)$ is compatible with the standard symplectic structure on $\R^{2n}$, it can be deformed through the space of complex structures until $J(z)=J_{0}$ is the standard almost complex structure. This deformation does not change the Fredholm index. Similarly deform $B(z)$ until $B(z)=0$.
\begin{prop}\label{prop:index_0}
  The operator $D_{0}\eta=\bd_{s}\eta+J_{0}\bd_{t}\eta$ is an isomorphism between the spaces:
  \begin{enumerate}
  \item $\eta\in W^{1,p}$ with $\eta(0,t)\in 0\times \R^{n}$, $\eta(\ell,t)\in \R^{n}\times 0$ and $R\eta(s,t+1)=\eta(s,t)$, and
  \item $\xi \in L^{p}$ with $R\xi(s,t+1)=\xi(s,t)$.
  \end{enumerate}
\end{prop}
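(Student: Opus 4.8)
The plan is to solve the equation $D_0\eta = \xi$ explicitly using separation of variables and the explicit boundary conditions. The key observation is that after complexifying --- writing $\eta = \eta_1 + i\eta_2$ with respect to $J_0$ --- the equation $\bd_s\eta + J_0\bd_t\eta = \xi$ becomes the inhomogeneous Cauchy--Riemann equation $\bar\partial \eta = \xi/2$ on the strip $[0,\ell]\times\R$, where $\eta$ and $\xi$ are $\C^n$-valued. The boundary condition $\eta(0,t)\in 0\times\R^n$ means the real part of the first group of coordinates vanishes on $s=0$, i.e.\ $\eta_j(0,t)\in i\R$ for $j=1,\dots,n$; the condition $\eta(\ell,t)\in\R^n\times 0$ means $\eta_j(\ell,t)\in\R$ for $j=1,\dots,n$ (a totally real boundary condition of ``Lagrangian Neumann/Dirichlet'' type on each coordinate). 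The twisting condition $R\eta(s,t+1)=\eta(s,t)$, where $R$ is the symplectic extension of $\rho$, is handled by passing to the appropriate $\rho^2$-periodic lift; since $\rho$ is orthogonal, $\rho^2$ is still orthogonal, and one may simultaneously block-diagonalize to reduce to scalar problems with either periodic or anti-periodic twisting.

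First I would reduce to the scalar case ($n=1$) by diagonalizing: since $\rho$ is an orthogonal transformation of $\R^n$, over $\C$ it decomposes $\C^n$ into one- and two-dimensional invariant subspaces, and on each the problem decouples (with appropriate modifications to the boundary conditions and the twist). On each scalar block one has the $\bar\partial$-operator on $[0,\ell]\times\R$ with: a totally real boundary condition along $s=0$ given by (a rotate of) $i\R$, a totally real boundary condition along $s=\ell$ given by (a rotate of) $\R$, and a monodromy $z\mapsto \mu z$ for $|\mu|=1$ coming from $R$. Doubling across the boundary $s=0$ via the Schwarz reflection principle (i.e.\ extending by $\eta(-s,t):=\overline{\eta(s,t)}$, using that the boundary condition there is a real line up to rotation) converts the strip problem into a problem on a wider strip or cylinder with a single totally real boundary condition, and one continues reflecting/unfolding until the domain becomes a genuine cylinder $\R/(2\ell)\Z \times\R$ (or its double) with only the $\mu$-monodromy in the $t$-direction.

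At that point the operator is the standard $\bar\partial$-operator on a cylinder twisted by a unitary monodromy, and I would invoke the classical Fourier-mode computation: expanding $\eta(s,t)=\sum_k \eta_k(s)e^{2\pi i(k+\theta)t}$ where $e^{2\pi i\theta}=\mu$, the equation becomes a family of ODEs $\eta_k'(s) + 2\pi(k+\theta)\eta_k(s) = \xi_k(s)$, each of which has a unique solution in the relevant weighted $W^{1,p}$/$L^p$ space because $k+\theta\ne 0$ for all $k$ precisely when the monodromy on $s=0$ and $s=\ell$ are ``transverse'' in the sense that the relevant phase does not hit $0$ --- this is exactly the content of the boundary conditions being $i\R$ on one side and $\R$ on the other, contributing a relative shift of $1/2$ which guarantees no Fourier mode is in the kernel. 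One then checks that the formal inverse is bounded $L^p\to W^{1,p}$ (elliptic estimate on the cylinder, uniform in the mode), giving the two-sided inverse. Finally, unwinding the reflections and reassembling the blocks recovers the isomorphism claimed between spaces (i) and (ii), with the $\R^p$-codomain of (ii) appearing because the right-hand boundary condition $\eta(\ell,t)\in\R^n\times 0$ is a Dirichlet condition on the second group of $n$ coordinates, which the reflection argument does not symmetrize --- hence the source has the full $\R^n\times 0$ freedom while the target only sees the $\rho$-twisted $L^p$ sections.

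**Main obstacle.** The delicate point is not the Fredholm index (which follows from the general theory quoted after the statement) but checking that the \emph{kernel and cokernel both vanish}, i.e.\ that there is genuinely no Fourier mode killed by the operator. This hinges on the precise interplay of the three conditions --- the $0\times\R^n$ boundary condition at $s=0$, the $\R^n\times 0$ boundary condition at $s=\ell$, and the $R$-monodromy --- and on correctly bookkeeping how the reflection doubling at each end interacts with the $\rho$-twist; getting the relative half-integer shift right (so that $k+\theta+\tfrac12\ne 0$ always) is where one must be careful, especially when $\rho$ has eigenvalue $-1$ and the naive monodromy is anti-periodic. I expect setting up the reflections compatibly with the block-diagonalization of $\rho$ to be the fussiest part of the write-up.
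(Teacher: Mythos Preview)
Your Fourier-analytic strategy can be made to work, but it is quite different from the paper's argument, which is shorter and avoids precisely the bookkeeping you flag as the main obstacle.

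The paper's proof: injectivity is a one-line appeal to Liouville's theorem, since a $W^{1,p}$ element of the kernel, unrolled in $t$ via the $R$-periodicity and reflected in $s$ via the boundary conditions, becomes a bounded entire function and hence vanishes. For surjectivity the paper does not diagonalize $\rho$ or use Fourier modes at all. Instead it unrolls the $t$-circle to $\R$ and quotes that $D_{0}:W^{1,p,\delta}([0,\ell]\times\R)\to L^{p,\delta}([0,\ell]\times\R)$ is surjective on exponentially weighted spaces with the same Lagrangian boundary conditions. Given $\xi$ with $R\xi(s,t+1)=\xi(s,t)$, one writes $\xi=\sum_{k}R^{k}\mu(\,\cdot\,,\,\cdot+k)$ for a compactly supported $\mu$ (partition of unity in $t$), solves $D_{0}\kappa=\mu$ in the weighted space, and sets $\eta=\sum_{k}R^{k}\kappa(\,\cdot\,,\,\cdot+k)$; the exponential decay of $\kappa$ makes the sum converge and the $R$-periodicity is automatic. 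This handles all orthogonal $R$ uniformly.

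A correction to your outline: the Schwarz reflection in $s$ is unnecessary and is the source of the confusion about the half-integer shift. If you Fourier-expand directly in the periodic variable $t$, each mode $\hat\eta_{k}$ solves a first-order ODE on the \emph{interval} $[0,\ell]$ with homogeneous solutions $c\,e^{2\pi(k+\theta)s}$, subject to $\hat\eta_{k}(0)\in i\R$ and $\hat\eta_{k}(\ell)\in\R$. Since the exponent is real, these force $c\in i\R\cap\R=\{0\}$ for \emph{every} $k$, including $k+\theta=0$; the transversality of the two Lagrangians already kills the kernel on the interval, and no phase shift enters. Your route would also need an $L^{p}$ multiplier estimate to reassemble the modes for $p\ne 2$, which the paper's unroll-and-sum argument sidesteps.
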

\begin{proof}
  Certainly $D_{0}$ is well-defined with the advertised domain (i) and codomain (ii). Liouville's theorem on the non-existence of bounded holomorphic functions implies $D_{0}$ is injective. Thus it remains only to prove that $D_{0}$ is surjective.

  It is a fact that $\kappa\mapsto D_{0}\kappa$ is surjective from $W^{1,p,\delta}([0,\ell]\times \R)\to L^{p,\delta}([0,\ell]\times \R)$, for small $\delta$, where the $\delta$ superscript denotes an exponentially weighted space, using the same boundary conditions for $W^{1,p,\delta}$. The argument is similar to \cite[\S 2.3]{salamon1997} and is proved in detail in \cite[Theorem 6.20]{cant_thesis}.

  Then, given $\xi$ satisfying $R\xi(s,t+1)=\xi(s,t)$, we can find $\mu$ compactly supported in $[0,\ell]\times \R$ so $\xi(s,t)=\sum_{k\in \Z}R^{k}\mu(s,t+k)$. Indeed, if $\beta(t)$ is a bump function and $\mu(s,t)=\beta(t)\xi(s,t)$, then the above will hold if:
  \begin{equation*}
    \sum_{k\in \Z}\beta(t+k)=1,
  \end{equation*}
  which can be achieved for $\beta$ supported in $(-1,1)$.

  Using the aforementioned surjectivity of $D_{0}$ on the exponentially weighted space, there is $\kappa\in W^{1,p,\delta}$ so $D_{0}\kappa=\mu$. Thus:
  \begin{equation*}
    \eta=\sum_{k\in \Z}R^{k}\kappa\implies D_{0}\eta=\xi,
  \end{equation*}
  and $\eta\in W^{1,p}$ will have the correct periodicity and boundary conditions. Indeed, this produces an explicit right inverse.
\end{proof}
Thus we have shown that the linearized operator has index $0$ when we restrict to variations fixing $\pi$ and $\ell$.

\subsection{Gluing}
\label{sec:gluing}
The goal in this section is to give an explicit description of the moduli space $\mathscr{M}(H_{t},J_{t})$ in the region where $\ell$ is very small. See \cite[\S 4.6]{abbondandolo_schwarz_2} for related discussion.

Introduce the notation $\mathscr{M}(\ell_{0})=\mathscr{M}\cap \set{\ell=\ell_{0}}$. Consider the two maps:
\begin{enumerate}
\item $\pi_{\ell_{0}}:\mathscr{M}(\ell_{0})\to P$ given by $(u,\pi,\ell_{0})\mapsto \pi$,
\item $\mathrm{ev}_{\ell_{0}}:\mathscr{M}(\ell_{0})\times \R/\Z\to M$ given by $(u,\pi,\ell_{0},t)\mapsto u(\ell_{0},t)$.
\end{enumerate}
The goals in this section are to prove that (a) sufficiently small $\ell_{0}$ are regular values for the projection $\ell:\mathscr{M}\to (0,\infty)$, (b) for $\ell_{0}$ sufficiently small, $\pi:\mathscr{M}(\ell_{0})\to P$ is a diffeomorphism and, (c) with respect to this identification, $\mathrm{ev}$ is $C^{0}$ close to $\mathfrak{B}$. The cobordism part of Theorem \ref{theorem:main} follows, since $C^{0}$ close maps are automatically cobordant, and it is clear that the slices $\mathscr{M}(\ell_{1})$ for large $\ell_{1}$ are cobordant to the slices for small $\ell_{0}$ (the moduli space is the cobordism).

\subsubsection{Travelling coordinates, revisited}
\label{sec:travelling_coordinates_revisited}

Fix $\pi_{0}\in P$, and for $\pi$ sufficiently close to $\pi_{0}$, let $\varphi_{t}^{\pi}$ be a path of embeddings $B(1)^{n}\to M$ so that $\varphi_{t}^{\pi}(0)=\mathfrak{B}(\pi,t)$ and $\varphi_{t+1}^{\pi}=\varphi_{t}^{\pi}\rho$ where $\rho:B(1)\to B(1)$ is a fixed orthogonal transformation (one can take $\rho$ to either be a reflection or the identity map, depending on whether $TM$ is orientable along the loop). Let $\Phi_{t}^{\pi}:B(1)\times \R^{n}\to W$ be the canonical extension of $\varphi_{t}^{\pi}$, and let $R$ be the linear symplectic extension of $\rho$.

Appealing to Proposition \ref{prop:small_ell}, for $\ell\le \ell_{0}$ sufficiently small, every element $(u,\pi,\ell)\in \mathscr{M}$ will satisfy $u(s,t)=\Phi_{t}^{\pi}(w(s,t)),$ where $w$ satisfies \eqref{eq:w_eq}. It is expedient to consider the conformal reparametrization $w(s,t)=v(\ell s,\ell t)$, where $v$ is defined on $[0,1]\times \R/\ell^{-1}\Z$, as in \S\ref{sec:length_shrinking_to_zero}. Then $v$ satisfies:
\begin{equation}\label{eq:v_eq}
  \left\{
    \begin{aligned}
      &\bd_{s}v+J_{t\ell}^{\pi}(v)\bd_{t}v=\ell A^{\pi}_{z\ell}(v)\\
      &v(0,t)\in 0\times \R^{n}\text{ and }v(1,t)\in B(1)\times 0\\
      &Rv(s,t+\ell^{-1})=v(s,t).
    \end{aligned}
  \right.
\end{equation}
Notice that as $\ell\to 0$, the equation for $v$ converges to the holomorphic curve equation.

\subsubsection{Linearized operator is uniformly an isomorphism}
\label{sec:uniformly_injective}
The non-linear problem \eqref{eq:v_eq} is posed for functions $v:[0,1]\times \R/\ell^{-1}\Z\to B(1)\times \R^{n}$, and hence we can linearize it at any map $v$, even those which do not solve \eqref{eq:v_eq}. We linearize by considering variations in the map $v$, and obtain the Cauchy-Riemann type operator:
\begin{equation*}
  D_{v,\pi,\ell}(\eta)=\bd_{s}\eta+J^{\pi}_{t\ell}(v)\bd_{t}\eta+B_{\pi,\ell}(z)\cdot \eta,
\end{equation*}
where $B_{\pi,\ell}(z)$ satisfies the estimate:
\begin{equation*}
  \abs{B_{\pi,\ell}(z)}\le C(\abs{v}+\abs{\bd_{t}v}+\ell),\text{ where }C=C(H_{t},J_{t},\mathfrak{B}).
\end{equation*}
A limiting argument yields:
\begin{prop}
  For sufficiently small $\gamma>0$ and $\pi$ nearby $\pi_{0}$ there exists $\delta>0$ so that:
  \begin{equation*}
    \norm{v}_{W^{1,p}}+\ell<\delta\implies \gamma\norm{\eta}_{W^{1,p}}\le \norm{D_{v,\pi,\ell}(\eta)}_{L^{p}}
  \end{equation*}
  In other words, the linearized operator $D_{v,\pi,\ell}$ is uniformly injective.
\end{prop}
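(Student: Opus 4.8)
The plan is to argue by contradiction, extracting a limit as $\ell\to 0$ and $\norm{v}_{W^{1,p}}\to 0$ and using that the limiting operator is the invertible operator $D_0$ from Proposition \ref{prop:index_0}. Suppose the estimate fails. Then for every $\gamma_k=1/k$ there are $\pi_k\to\pi_0$, $\ell_k\to 0$, maps $v_k:[0,1]\times\R/\ell_k^{-1}\Z\to B(1)\times\R^n$ with $\norm{v_k}_{W^{1,p}}\to 0$, and sections $\eta_k$ with $\norm{\eta_k}_{W^{1,p}}=1$ but $\norm{D_{v_k,\pi_k,\ell_k}(\eta_k)}_{L^p}\le \gamma_k\to 0$. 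The key structural point is that, by the estimate $\abs{B_{\pi,\ell}(z)}\le C(\abs{v}+\abs{\bd_t v}+\ell)$, the zeroth-order term $B_{\pi_k,\ell_k}$ tends to $0$ in an appropriate sense (its $L^p$-operator norm against the bounded family $\eta_k$ is controlled by $\norm{v_k}_{W^{1,p}}+\ell_k\to 0$), and $J^{\pi_k}_{t\ell_k}(v_k)\to J_0$ (after the deformations/coordinate normalizations already used in \S\ref{sec:travelling_coordinates} one may assume the limiting complex structure is standard along the totally real boundary, and $t\ell_k\to 0$, $v_k\to 0$). So morally $D_{v_k,\pi_k,\ell_k}\to D_0$.

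The technical obstacle — and the step I expect to be the main difficulty — is that the domains $[0,1]\times\R/\ell_k^{-1}\Z$ are \emph{changing} (the circle factor is getting very long), so one cannot directly invoke compactness of a fixed operator. The standard fix is a \emph{breaking/bubbling-off} argument at the level of the linearization: rescale and recenter. Concretely, since $\norm{\eta_k}_{W^{1,p}}=1$, there is a point $z_k=(s_k,t_k)\in[0,1]\times\R/\ell_k^{-1}\Z$ where a fixed fraction of the $W^{1,p}$-norm of $\eta_k$ concentrates in a unit ball; translate the $t$-coordinate so $t_k=0$, obtaining $\tilde\eta_k$ on $[0,1]\times(-\ell_k^{-1}/2,\ell_k^{-1}/2)$, which exhausts $[0,1]\times\R$. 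The interior elliptic estimate for Cauchy-Riemann operators, together with the $L^p\to 0$ bound on $D_{v_k,\pi_k,\ell_k}(\eta_k)$ and the coefficient convergence, gives $W^{1,p}_{\mathrm{loc}}$ (hence $C^0_{\mathrm{loc}}$) bounds and a nonzero $W^{1,p}_{\mathrm{loc}}$-limit $\eta_\infty$ on $[0,1]\times\R$ with $D_0\eta_\infty=0$. One must be a little careful about which boundary conditions survive: if $s_k$ stays bounded away from $\{0,1\}$ the limit is on a strip with no boundary condition and $\eta_\infty$ is a bounded entire holomorphic $\C^n$-valued function, hence constant by Liouville, hence $0$ — contradiction; if $s_k$ converges to $0$ or $1$, the limit retains one totally real boundary condition ($0\times\R^n$ or $\R^n\times 0$) and again the relevant Liouville/unique-continuation argument (exactly as in Proposition \ref{prop:index_0}, where $D_0$ was shown injective) forces $\eta_\infty\equiv 0$ — contradiction.

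To make the concentration step clean I would actually bootstrap: first use the global a priori estimate $\norm{\eta}_{W^{1,p}}\le C(\norm{D_0\eta}_{L^p}+\norm{\eta}_{L^p})$ valid on any $[0,1]\times\R/L\Z$ uniformly in $L$ (this is the standard elliptic estimate for $D_0$ with these boundary conditions, with constant independent of the length $L$ of the circle — itself provable by the translation-invariance and a partition-of-unity/rescaling argument, cf. \cite[Theorem 6.20]{cant_thesis}), to reduce to showing $\norm{\eta_k}_{L^p}\not\to 0$. If $\norm{\eta_k}_{L^p}\to0$, combine with $\norm{D_{v_k}(\eta_k)-D_0\eta_k}_{L^p}\le (\text{small})\cdot\norm{\eta_k}_{W^{1,p}}+\norm{B_{\pi_k,\ell_k}\eta_k}_{L^p}\to 0$ and $\norm{D_{v_k}\eta_k}_{L^p}\to 0$ to get $\norm{D_0\eta_k}_{L^p}\to0$, whence $\norm{\eta_k}_{W^{1,p}}\to 0$, contradicting $\norm{\eta_k}_{W^{1,p}}=1$. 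So one may assume $\norm{\eta_k}_{L^p}\ge c>0$, and \emph{then} the recentering picks up a nonzero limit as above. The remaining routine points — that the coefficient $J^{\pi_k}_{t\ell_k}(v_k)$ converges in $C^0_{\mathrm{loc}}$ to $J_0$ after the normalizations of \S\ref{sec:travelling_coordinates}, and that the recentered boundary conditions converge — I would dispatch with a sentence each, citing \cite{mcduffsalamon} for the interior Cauchy-Riemann estimates and Proposition \ref{prop:index_0} for the Liouville-type vanishing of the limit.
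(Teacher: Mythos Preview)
Your overall strategy --- argue by contradiction and compare with a limiting strip operator --- is the right idea, but two points deserve attention, one of which is a genuine gap.

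First, a correctable error: the limiting complex structure is not $J_{0}$. You write ``$t\ell_{k}\to 0$,'' but $t$ ranges over $\R/\ell_{k}^{-1}\Z$, so $t\ell_{k}$ sweeps out all of $\R/\Z$; the deformation to $J_{0}$ in \S\ref{sec:travelling_coordinates} is a homotopy used only to compute the index, not an actual normalization of the operator. After recentering at $t_{k}$, the coefficient $J^{\pi_{k}}_{(t+t_{k})\ell_{k}}(v_{k})$ converges (on compact sets) to $J^{\pi_{0}}_{\tau}(0)$ for $\tau=\lim t_{k}\ell_{k}\in\R/\Z$. The paper accordingly works with the family $D^{\infty}_{\pi,\tau}(\eta)=\bd_{s}\eta+J^{\pi}_{\tau}(0)\bd_{t}\eta$ on $[0,1]\times\R$, which is still an isomorphism (transverse totally real boundary conditions), and fixes $\gamma$ by $6\gamma\norm{\eta}_{W^{1,p}}\le\norm{D^{\infty}_{\pi,\tau}\eta}_{L^{p}}$ uniformly in $\tau$. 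This is easily patched in your argument.

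The more serious gap is the concentration step. From $\norm{\eta_{k}}_{L^{p}}\ge c>0$ on the cylinder $[0,1]\times\R/\ell_{k}^{-1}\Z$ of diverging length, it does \emph{not} follow that a fixed fraction of the mass sits in a single unit window: the $L^{p}$ mass can spread evenly over $\sim\ell_{k}^{-1}$ unit intervals, each carrying $\sim\ell_{k}^{1/p}\to 0$, so that every recentered limit is zero. Your proposed Liouville/vanishing step therefore never gets off the ground. This is precisely the obstacle the paper's proof is designed to avoid: rather than extracting any limit, it uses a partition-of-unity argument directly. One covers $\R/\ell^{-1}\Z$ by intervals $I$ of fixed length $3L$, chooses $\tau$ to be the center of $I$ times $\ell$ so that $\norm{D_{n}(\beta\eta)-D^{\infty}_{\pi_{n},\tau}(\beta\eta)}_{L^{p}}$ is small, applies the strip isomorphism to $\beta\eta$, and sums. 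The commutator term $[D_{n},\beta]$ contributes $CL^{-1}\norm{\eta}_{L^{p}}$, which is absorbed by choosing $L$ large. This bypasses concentration-compactness entirely. You already invoke a partition-of-unity for the weaker elliptic estimate $\norm{\eta}_{W^{1,p}}\le C(\norm{D_{0}\eta}_{L^{p}}+\norm{\eta}_{L^{p}})$; the point is that the same device, applied with the \emph{invertibility} estimate for $D^{\infty}_{\pi,\tau}$ rather than the mere elliptic estimate, yields the injectivity bound directly, and the detour through recentering and limits is unnecessary.
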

\begin{proof}
  For each $\tau\in \R/\Z$, consider the limiting linearized problem:
  \begin{equation*}
    \left\{
      \begin{aligned}
        &D^{\infty}_{\pi,\tau}(\eta)=\bd_{s}\eta+J^{\pi}_{\tau}(0)\bd_{t}\eta\\
        &\eta\in W^{1,p}([0,1]\times \R,\R^{2n})\\
        &\eta(0,t)\in 0\times \R^{n}\text{ and }\eta(1,t)\in \R^{n}\times 0.
      \end{aligned}
    \right.
  \end{equation*}
  As in the proof of Proposition \ref{prop:index_0}, this is an isomorphism to $L^{p}([0,1]\times \R,\R^{2n})$, since the two boundary conditions are transverse totally real-subspaces. In particular, we can pick $\gamma$ large enough so that:
  \begin{equation*}
    6\gamma\norm{\eta}_{W^{1,p}}\le \norm{D^{\infty}_{\pi,\tau}(\eta)}_{L^{p}}.
  \end{equation*}
  for all $\pi$ near $\pi_{0}$ and $\tau\in \R/\Z$. This fixes $\gamma$.

  Suppose the statement does not hold for any $\delta$. Then there exists $v_{n},\pi_{n},\ell_{n}$ so that $\norm{v_{n}}_{W^{1,p}}+\ell_{n}$ converges to zero and $\pi_{n}$ converges, but $D_{n}:=D_{v_{n},\pi_{n},\ell_{n}}$ fails the desired estimate (for some $\eta_{n}$).

  Suppose, as a special case, that $\eta_{n}(s,t)$ is supported in a region where $t\in I_{n}$ and $I_{n}$ has length $3L$. Let $t^{*}_{n}$ be the center of $I_{n}$. By picking $\tau_{n}=t_{n}^{*}\ell_{n}$, we conclude that, for any $\theta$, $$\norm{D_{n}(\eta_{n})-D^{\infty}_{\pi_{n},\tau_{n}}(\eta_{n})}_{L^{p}}\le \theta\norm{\eta_{n}}_{W^{1,p}},$$ for $n$ large enough. This follows from the computation:
  \begin{equation*}
    D_{n}(\eta_{n})-D^{\infty}_{\pi_{n},\tau_{n}}(\eta_{n})=(J_{t\ell_{n}}^{\pi_{n}}(v_{n})-J_{\tau}^{\pi_{n}}(0))\bd_{t}\eta_{n}+B_{\pi,\ell}(z)\eta_{n},
  \end{equation*}
  appealing to standard estimates for the $W^{1,p}$ norm, and using $\abs{t\ell_{n}-\tau}\le 3\ell_{n}$. How large $n$ needs to be depends on $L,\theta$, how quickly $\norm{v_{n}}_{W^{1,p}}+\ell_{n}$ decays, and on the derivatives of $\tau,v\mapsto J^{\pi_{n}}_{\tau}(v)$, but is otherwise independent of $\eta_{n}$. Then:
  \begin{equation*}
    (6\gamma-\theta)\norm{\eta_{n}}_{W^{1,p}}\le \norm{D_{n}(\eta_{n})}_{L^{p}}.
  \end{equation*}
  In general, cover the domain of $\eta_{n}$ by regions $[0,1]\times I$ where $I$ has length $3L$ so that each point is contained in at most four of the intervals\footnote{If $\ell_{n}^{-1}$ is an integer multiple of $L/3$, one can replace ``four'' by ``three.''} and at least one middle third. Using translations of bump functions $\beta$ which are $1$ on $[L,2L]$ and supported in $[0,3L]$, conclude that:
  \begin{equation*}
    (6\gamma-\theta)\norm{\beta\eta_{n}}_{W^{1,p}(I)}\le \norm{\beta D_{n}(\eta_{n})}_{L^{p}(I)}+CL^{-1}\norm{\eta_{n}}_{L^{p}(I)},
  \end{equation*}
  where we assume that the first derivative of $\beta$ is bounded by $L^{-1}$, and where $C$ depends only on the values of the complex structure $J_{t}^{\pi}$. Sum over all the intervals to conclude:
  \begin{equation*}
    (6\gamma-\theta)\norm{\eta_{n}}_{W^{1,p}}\le 4\norm{D_{n}(\eta_{n})}_{L^{p}}+4CL^{-1}\norm{\eta_{n}}_{L^{p}}.
  \end{equation*}
  Make $L$ large enough that $4CL^{-1}<\gamma$, and pick $\theta<\gamma$. For $n$ sufficiently large:
  \begin{equation*}
    \gamma\norm{\eta_{n}}_{W^{1,p}}\le \norm{D_{n}(\eta_{n})}_{L^{p}},
  \end{equation*}
  contradicting the assumption.
\end{proof}

As a corollary, we conclude that, for $\norm{v}_{W^{1,p}}+\ell<\delta$, the linearization $D_{v,\pi,\ell}$ has a bounded inverse (since it is injective and has Fredholm index $0$), and that the norm of the inverse is bounded by $1/\gamma$.

\subsubsection{Existence, uniqueness, and the implicit function theorem}
\label{sec:existence_uniqueness}

\begin{prop}\label{prop:small_w1p}
  For sufficiently small $\epsilon>0$, $\ell>0$, and $\pi$ nearby $\pi_{0}$, there exists a unique solution of \eqref{eq:v_eq} in the $W^{1,p}$ ball of radius $\epsilon$ around $0$. This solution depends smoothly on $\ell$ and $\pi$.
\end{prop}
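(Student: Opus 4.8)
The plan is to apply the quantitative implicit function theorem (Newton--Picard iteration) in the form familiar from gluing arguments, with the Cauchy--Riemann operator $D_{0,\pi,\ell}$ linearized at $v\equiv 0$ playing the role of the invertible model operator. The key estimates needed are: (1) a uniform bound on the inverse of $D_{0,\pi,\ell}$, which was obtained in the previous subsection (norm of the inverse at most $1/\gamma$, uniformly for $\ell<\delta$ and $\pi$ near $\pi_{0}$); (2) a bound on the ``error term'' $\mathscr{S}_{\pi,\ell}(0)$, i.e.\ the failure of $v\equiv 0$ to solve \eqref{eq:v_eq}; and (3) a quadratic estimate on the nonlinearity. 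I would first record that $v\equiv 0$ does satisfy the totally-real boundary conditions and periodicity in \eqref{eq:v_eq}, so the only obstruction to being a genuine solution is the right-hand side, and $\|\mathscr{S}_{\pi,\ell}(0)\|_{L^{p}}=\|\ell A^{\pi}_{0}(0)\|_{L^{p}}\le C\ell\to 0$ as $\ell\to 0$ (here using that the domain $[0,1]\times\R/\ell^{-1}\Z$ has $L^{p}$-volume comparable to $\ell^{-1}$, so one must be slightly careful with the norm — this is the one genuinely $\ell$-dependent point, and I address it below).

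Next I would set up the map $\mathscr{F}_{\pi,\ell}(\eta)=\mathscr{S}_{\pi,\ell}(\eta)-\mathscr{S}_{\pi,\ell}(0)-D_{0,\pi,\ell}(\eta)$ capturing the nonlinear remainder, and establish the quadratic estimate $\|\mathscr{F}_{\pi,\ell}(\eta_{1})-\mathscr{F}_{\pi,\ell}(\eta_{2})\|_{L^{p}}\le c(\|\eta_{1}\|_{W^{1,p}}+\|\eta_{2}\|_{W^{1,p}})\|\eta_{1}-\eta_{2}\|_{W^{1,p}}$ valid on the $W^{1,p}$-ball of radius $\epsilon$; this uses the Sobolev embedding $W^{1,p}\hookrightarrow C^{0}$ (recall $p>2$) and the fact that $J^{\pi}_{t\ell}(v)$ and $A^{\pi}_{z\ell}(v)$ depend smoothly on $v$ with derivatives bounded on $B(1)\times\R^{n}$ and uniformly in the large domain (the constants being genuinely independent of $\ell$ because everything is ``$s$-translation-uniform'' and the interval-splitting argument of the previous subsection localizes all estimates). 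With these three ingredients, the standard Newton--Picard fixed-point argument for the contraction $\eta\mapsto -D_{0,\pi,\ell}^{-1}(\mathscr{S}_{\pi,\ell}(0)+\mathscr{F}_{\pi,\ell}(\eta))$ produces, for $\ell$ and $\epsilon$ small enough, a unique zero $\eta(\pi,\ell)$ of $\mathscr{S}_{\pi,\ell}$ in the ball of radius $\epsilon$, with $\|\eta(\pi,\ell)\|_{W^{1,p}}\le 2\gamma^{-1}\|\mathscr{S}_{\pi,\ell}(0)\|_{L^{p}}$; smooth dependence on $(\pi,\ell)$ follows from the implicit function theorem applied to the smooth section, using that the linearization at the solution is still invertible (being a small perturbation of $D_{0,\pi,\ell}$).

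The main obstacle I anticipate is making precise the claim that all the constants — the bound $1/\gamma$ on the inverse, the nonlinearity constant $c$, and the error bound — are uniform in $\ell$ as $\ell\to 0$, since the domain $[0,1]\times\R/\ell^{-1}\Z$ is \emph{growing}. The resolution, already foreshadowed by the interval-decomposition argument in \S\ref{sec:uniformly_injective}, is that the $W^{1,p}$ and $L^{p}$ norms are local and additive over a uniform cover by fixed-size sub-cylinders $[0,1]\times I$, and all the pointwise estimates on $J,A$ and their derivatives are uniform in the $\R/\ell^{-1}\Z$-direction; hence the relevant operator norms do not degenerate. One subtlety: the error $\|\ell A^{\pi}_{z\ell}(0)\|_{L^{p}([0,1]\times\R/\ell^{-1}\Z)}$ is $\ell\cdot\|A\|_{\infty}\cdot(\text{vol})^{1/p}\sim \ell\cdot\ell^{-1/p}=\ell^{1-1/p}$, which still tends to $0$ since $p>2>1$; so the Newton scheme closes, and one should state the radius-$\epsilon$ ball in $W^{1,p}$ with $\epsilon$ chosen after $\ell$ (or note $\epsilon$ can be taken uniform once $\ell$ is small). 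I would end by remarking that this $\eta(\pi,\ell)$, transported back through $v=\exp$-type coordinates (here literally $v=\eta$ since the target chart is linear), gives the asserted unique element of $\mathscr{M}$ with $\ell$-coordinate equal to the prescribed value, depending smoothly on $(\pi,\ell)$.
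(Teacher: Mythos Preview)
Your proposal is correct and follows essentially the same approach as the paper: Newton--Picard iteration using the uniform invertibility of $D_{0,\pi,\ell}$ from \S\ref{sec:uniformly_injective}, followed by the implicit function theorem for smooth dependence on $(\pi,\ell)$. The paper's own proof simply cites \cite[Proposition A.3.4]{mcduffsalamon} with $c_{\mathrm{MS}}=\gamma^{-1}$, $\delta_{\mathrm{MS}}=\epsilon$, $x_{0,\mathrm{MS}}=x_{1,\mathrm{MS}}=0$ and leaves the details to the reader, whereas you have usefully spelled out the error bound $\|\mathscr{S}_{\pi,\ell}(0)\|_{L^{p}}\lesssim \ell^{1-1/p}$ and the reason the constants are uniform as the domain grows.
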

\begin{proof}
  Uniqueness and smooth dependence follows from implicit function theorem in Banach spaces, and the existence follows from Newton's iteration, as explained in \cite[\S A.3]{mcduffsalamon}.

  For existence and uniqueness (with fixed $\ell,\pi$), one uses \S\ref{sec:uniformly_injective} to ensure the hypotheses of \cite[Proposition A.3.4]{mcduffsalamon} (with $c_{\mathrm{MS}}=\gamma^{-1}$, $\delta_{\mathrm{MS}}=\epsilon$, $x_{0,\mathrm{MS}}=x_{1,\mathrm{MS}}=0$, where the subscript $\mathrm{MS}$ signifies terms in the statement of their proposition). The details are left to the reader.
\end{proof}

Returning to the global problem, we conclude that, for $\ell_{0}$ sufficiently small, the projection map $\mathscr{M}(\ell_{0}):=\mathscr{M}(H_{t},J_{t},\mathfrak{B})\cap \set{\ell=\ell_{0}}\to P$ is surjective and has a locally defined smooth right inverse (using the smoothly varying solutions guaranteed by Proposition \ref{prop:small_w1p}).

However, Proposition \ref{prop:small_ell} and the derivative bounds on the conformal reparametrization from \S\ref{sec:length_shrinking_to_zero} imply that every solution in $\mathscr{M}(\ell_{0})$ enters the $W^{1,p}$ ball of radius $\epsilon$ around $0$, after applying the coordinate changes and conformal reparametrizations needed to pass from $u$ to $v$. Thus the local smooth right inverses $P\to \mathscr{M}(\ell_{0})$ patch together to be a global smooth inverse.

Since the $W^{1,p}$ norm controls the $C^{0}$ norm, $\mathrm{ev}_{\ell_{0}}:\mathscr{M}(\ell_{0})\times \R/\Z\to M$ is $C^{0}$ close to $\mathfrak{B}$, with respect to the identification of $\mathscr{M}(\ell_{0})$ with $P$, so $\mathrm{ev}_{\ell_{0}}$ is cobordant to $\mathfrak{B}$. Since the mapping degree of $\mathscr{M}(\ell_{0})\to P$ is stable under cobordisms, the proof of Theorem \ref{theorem:main} is complete.

\bibliography{citations}
\bibliographystyle{alpha}
\end{document}